\documentclass{article}

\usepackage{color} 
\usepackage{amssymb} 
\usepackage{amsmath} 
\usepackage{epsfig}

\usepackage{amsthm}
\usepackage{enumitem}
\usepackage[dvipsnames]{xcolor}

\usepackage[backend=biber, style=numeric]{biblatex}
\addbibresource{mybibliography.bib}

\usepackage{bbm}

\newtheorem{theorem}{Theorem}
\newtheorem{corollary}{Corollary}
\newtheorem{lemma}{Lemma}
\usepackage{hyperref}
\hypersetup{
    colorlinks=true,
    linkcolor=blue,
    filecolor=magenta,      
    urlcolor=cyan,
    pdftitle={Overleaf Example},
    pdfpagemode=FullScreen
    }

\usepackage{siunitx}

\usepackage{cleveref} 
\topmargin-.5in
\textwidth6.6in
\textheight9in
\oddsidemargin0in


\def\range{\mathcal{R}}

\def\zhatdef{{\mat{Z\Omega}_k^\dagger \mat{\Lambda}_{k}^{-1/2}}}
\def\zhat{\widehat{\mat Z}}

\def\reals{\mathbb{R}}
\def\R{\mathbb{R}}

\def\ld{\mathrm{logdet}}

\def\rank{\mathrm{rank}}
\def\trace{\mathrm{trace}}

\def\scriptO{{{\it O}\kern -.42em {\it `}\kern + .20em}}
\newcommand{\mat}[1]{\mathbf{#1}} 
\renewcommand{\vec}[1]{\mathbf{#1}}
\renewcommand{\t}{^\top}
\newcommand{\wh}[1]{\widehat{#1}} 
\newcommand{\e}[1]{\times 10^{#1}}

\newcommand{\bmat}[1]{\begin{bmatrix} #1 \end{bmatrix}}

\newcommand{\email}[1]{\protect\href{mailto:#1}{#1}}


\usepackage{algorithm}
\usepackage{algpseudocode}
\algrenewcommand\algorithmicrequire{\textbf{Input:}}
\algrenewcommand\algorithmicensure{\textbf{Output:}}
\usepackage{caption}
\usepackage{subcaption}
\usepackage[normalem]{ulem}

\Crefname{subsection}{Section}{Sections}
\crefname{subsection}{section}{sections}


\title{Optimal Experimental Design for Gaussian Processes via Column Subset Selection\thanks{This work was funded by the following grants: NSF DMS-1745654, NSF DMS-2309774, NSF DMS-2411198, and DOE DE-SC0023188.}}
\author{Jessie Chen\thanks{Department of Mathematics, North Carolina State University. Email: \email{jchen228@ncsu.edu}, \email{hji5@ncsu.edu}, \email{asaibab@ncsu.edu}} \and Hangjie Ji\footnotemark[2]\and Arvind K.\ Saibaba\footnotemark[2]}

\begin{document}
\maketitle

\begin{abstract}
    Gaussian process regression uses data measured at sensor locations to reconstruct a spatially dependent function with quantified uncertainty. However, if only a limited number of sensors can be deployed, it is important to determine how to optimally place the sensors to minimize a measure of the uncertainty in the reconstruction. We consider the Bayesian D-optimal criterion to determine the optimal sensor locations by choosing $k$ sensors from a candidate set of $n$ sensors. 
    Since this is an NP-hard problem, 
    our approach models sensor placement as a column subset selection problem (CSSP) on the covariance matrix, computed using the kernel function on the candidate sensor points. We propose an algorithm that uses the Golub-Klema-Stewart framework (GKS) to select sensors and provide an analysis of lower bounds on the D-optimality of these sensor placements. To reduce the computational cost in the GKS step, we propose and analyze algorithms for the D-optimal sensor placements using Nystr\"om approximations on the covariance matrix. Moreover, we propose several algorithms that select sensors via Nystr\"om approximation of the covariance matrix, utilizing the randomized Nystr\"om approximation, random pivoted Cholesky 
    and greedy pivoted Cholesky. We demonstrate the performance of our method on three applications: thin liquid film dynamics, sea surface temperature, and surrogate modeling.
\end{abstract}

\textbf{Keywords } optimal experimental design, Gaussian processes, sensor placement, column subset selection, randomized algorithms.

\textbf{MSC Codes } 68W20, 65F40, 65F55, 60G15, 62K05

\section{Introduction}
In many applications involving engineering and natural sciences, a key question is that of how best to collect data to reconstruct a spatially (or spatiotemporally) dependent function. Oftentimes, data can be collected using only a limited number of sensors due to budgetary or physical constraints. Thus, in this scenario, the question becomes how to best place the limited number of sensors to collect data and perform inference. This is known as optimal sensor placement and falls under the purview of optimal experimental design. 
In one formulation of optimal sensor placement, we are tasked with selecting $k$ best sensor placements out of $n$ candidate sensor locations (where $k\ll n$), so that we can make informative predictions of function values (as a scalar quantity) in locations without sensors. We assume that sensor locations must be selected before any data is collected, and that their locations are fixed throughout the data acquisition process. This is known as the batch or static setting~\cite{oedreview}.

A popular class of methods known as dictionary-based approach (see \cite{callaham2019robust,farazmand2023tensor} for a review of related techniques) can be used to estimate the state from sparse measurements. A prevalent example is POD-DEIM, which has a two-fold approach:  first, a basis $\mat\Phi$ is extracted from training data or high-resolution simulations, using a proper orthogonal decomposition (POD); and second,  an algorithm to place sensors and reconstruct the function is determined by the discrete empirical interpolation method (DEIM)~\cite{poddeim1,chaturantabut2010nonlinear}.    There are two drawbacks to these dictionary-based methods. First, to build such a dictionary, we would require large amounts of data over the entire spatial domain. Second, the reconstructions themselves are typically deterministic and do not come with quantifiable uncertainties in their predictions (some exceptions include~\cite{klishin2023data,kakasenko2025bridging}). We make clear comparisons between POD-DEIM and our proposed methods through numerical experiments in \Cref{sec:results}.

To address these two drawbacks, we turn to Gaussian Processes (GPs)~\cite{rasmussenGP,gramacy2020surrogates}, which have wide-ranging applications in spatial statistics, surrogate modeling, Bayesian inverse problems, etc. We introduce GPs more formally in \Cref{GPR}. GPs naturally come with quantifiable uncertainties and, compared to dictionary-based methods, require minimal training data for estimating hyperparameters. To determine optimal sensor placements, we adopt the so-called expected information gain (EIG) or D-optimality criterion. 

\begin{figure}
    \centering
    \includegraphics[width=0.75\linewidth]{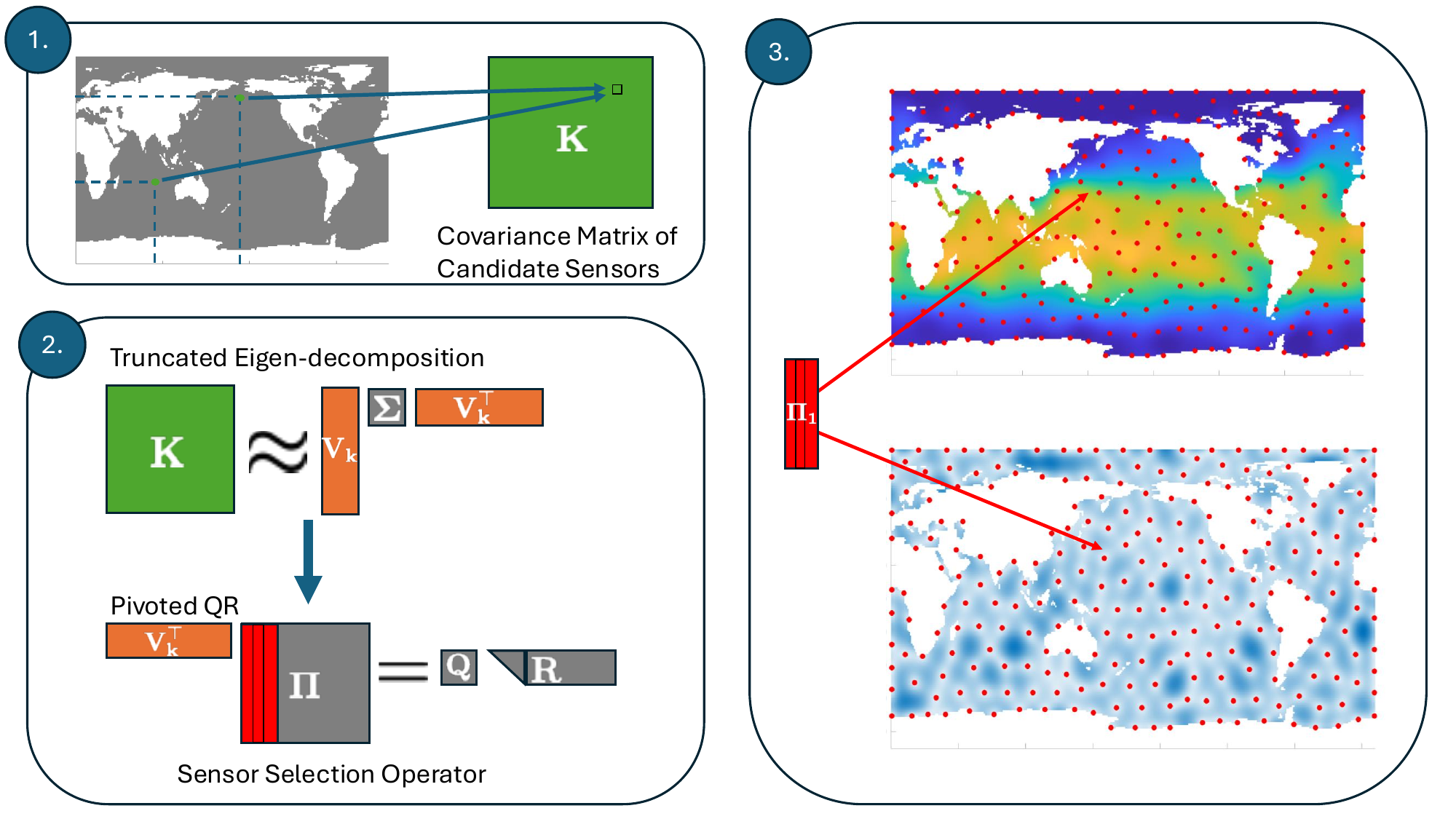}
    \caption{Schematic representation of the workflow in our proposed method to selecting D-optimal sensors in GPs. The red dots represent the sensors placed by the GKS algorithm on the covariance matrix $\mat K$.
    }
    \label{fig:lightning_slide}
\end{figure}

\subsection{Contributions and Outline of Paper}
In this paper, we model the data acquisition problem and reconstructions as a GP with quantifiable uncertainties. We follow the approach in~\cite{eswar2024bayesian} and view the sensor placement problem through the lens of column subset selection on the covariance matrix $\mat{K}$ defined using the kernel function and the candidate sensor locations. A schematic of this approach is given in \Cref{fig:lightning_slide}. The approach does not require observational data beyond calibrating the kernel function's hyperparameters, and the proposed algorithms are linear in computational complexity with respect to the number of candidate sensors $n$. We summarize the major contributions to this paper below. 
\begin{itemize}
\item \textbf{Conceptual GKS (\Cref{conceptual}):} We propose an algorithm that obtains near-optimal D-optimality via the Golub-Klema-Stewart (GKS) framework~\cite[Section 5.5.7]{golub2012matrix}, previously proposed for column subset selection. Furthermore, we provide theoretical bounds on the D-optimality criterion in terms of the partitioned eigendecomposition of the covariance matrix $\mat{K}$.  We say this approach is conceptual because for large $n$, this algorithm becomes infeasible.
\item \textbf{Nystr\"om Approximation and GKS (\Cref{ssec:nys})}: If the number of candidate sensors is large, the corresponding covariance matrix $\mat K$ is too large to be formed explicitly. Thus, we propose an algorithmic framework that first approximates the  covariance matrix using a randomized Nystr\"om approximation, followed by the conceptual GKS approach on the approximate covariance matrix.   We consider several flavors of Nystr\"om approximation including random projection, greedy pivoted Cholesky, and random pivoted Cholesky.  We provide bounds on the D-optimality of the selection made by the random projection and the greedy pivoted Cholesky algorithm. Potentially of separate interest, we derive bounds on the eigenvalues of a Nystr\"om approximation \Cref{sec:proofs}.

     \item \textbf{Numerical Experiments (\Cref{sec:results}):} We provide results from numerical experiments with contexts in one-dimensional fiber coating dynamics and two-dimensional sea surface temperature. A brief exploration on four-dimensional surrogate modeling is in \Cref{ssec:4d}. Each experiment utilizes the square exponential kernel. Our results show that using the GKS framework to place sensors is more optimal than random placements in terms of the D-optimality criterion, and comparable to the greedy approach. We also show that sensor placements using our algorithms give more accurate reconstructions in terms of relative reconstruction error over time compared to sensor placement using POD-DEIM \cite{poddeim1,poddeim2,qdeim}.
\end{itemize}

Lastly, \Cref{sec:proofs} compiles the proofs for the theorems in \Cref{sec:methods} and provides the reader with the necessary background concepts in matrix theory.\\
 The Supplementary Materials provide further numerical experiments on the one-dimensional fiber coating dynamics \Cref{ssec:gksvsnogks} and two-dimensional sea surface temperature \Cref{ssec:sst} examples. In addition, a brief exploration of a four-dimensional example for surrogate modeling is in \Cref{ssec:4d}. Finally, in the Supplementary Materials, a proof of the bounds on the eigenvalues of a Nystr\"om approximated matrix can be found in \Cref{sec:proofs}.

\subsection{Related work} In \cite[Section 2.3.3]{oedreview}, the authors summarize a variety of approaches to sensor placement and optimal experimental design (OED) in GPs. This list includes maximum entropy sampling \cite{maxentsample}, integrated variance in continuous interpretations of sensor placements in GPs \cite{gpexp}, and spectral decompositions of the Karhunen-Lo\'eve expansion of GPs \cite{spectralgpoed}.  Krause et al \cite{krause2008gpsensor} introduced a greedy selection algorithm for sensor selection in GPs in terms of mutual information between the selected and unselected sensors.

In the list above, the D-optimality criterion we consider is most closely related to the maximum entropy criterion. Ko et al.~\cite{ko1995exact} established that this problem is NP-hard and proposed greedy and exchange algorithms to solve this maximization problem. In a greedy algorithm, the optimal sensors are selected one at a time. In contrast, in an exchange approach, there are two sets of sensors: a chosen set and a discarded set. One or more sensors are exchanged to improve the criterion, till no further improvements are possible. Both the greedy and exchange algorithms are used as initial guesses in a branch-and-bound algorithm. The greedy algorithm \cite[Algorithm 1]{chen2018fast} we consider is mathematically equivalent, but more computationally efficient.   

More recent work has proposed variants of greedy algorithms with applications to Gaussian processes, such as distributed greedy and stochastic greedy~\cite{mirzasoleiman2013distributed, mirzasoleiman2015lazier}. Both papers focus on efficiently maximizing monotonic submodular functions. In \cite{mirzasoleiman2013distributed}, a greedy algorithm suited for parallel processing is presented. An efficient stochastic greedy algorithm is presented in \cite{mirzasoleiman2015lazier} with theoretical guarantees. These works apply their respective algorithms to GP sensor placement using the same criterion that we consider --- the D-optimality criterion, which maximizes the expected information gain. Our algorithms offer a different perspective on sensor placement. Namely, we view sensor placement based on selecting a near-optimal submatrix to preserve the spectrum of the covariance matrix. The proposed methods rely on low-rank approximations and pivoted QR factorizations, for which there are many highly optimized codes available. While the proposed methods have the same asymptotic cost as efficient implementations of the greedy method, in numerical experiments, our methods are slightly better than the greedy methods in terms of D-optimality.

GP regression can also be viewed as a linear inverse problem with a forward operator which is a selection or measurement operator (that is, it measures the point values of the function at prespecified locations), and with a GP prior. A review of techniques of sensor placement in Bayesian inverse problems can be found~\cite{oedreview,alexanderian2021optimal}. However, the existing methods for inverse problems  take into account the expense of the forward operator, and thus do not fully exploit the structure of the GPs.

In contrast to the static sensor placement which is the focus of this paper, in sequential sensor placement, a sequence of sensor placements are conducted, and the result of the previous sensor placement is used to inform the placement of the subsequent sensors. Related to sequential sensor placement is \emph{active learning} \cite{oedreview, gramacy2020surrogates}, which involves sequential (or batch) queries of data collection followed by a model update. For an active learning procedure with an acquisition function based on the D-optimality criterion, we posit that a selection made by our proposed algorithms provides selections in each acquisition loop. 
Seo et al \cite{seo} connected active learning perspectives to sensor placement for GPs and coined the terms \emph{Active Learning MacKay} \cite{ALMacKAy} and \emph{Active Learning Cohn} \cite{ALCohn}. Active Learning MacKay begins with no initial sensor placements and sequentially places sensors based on the next input data point with the largest predictive variance. This method approximates maximum entropy sampling. 
Active Learning Cohn is another sequential design that places sensors based on minimized average variance. Although our approaches are not directly relevant for the sequential or active learning setting, they can be used to seed an initial set of sensor placements at which to collect data.  

\section{Background}
This section briefly covers notation,  preliminary concepts on GPs and Rank-Revealing QR factorizations that are used throughout this paper.
\subsection{Notation}\label{notation}
In this paper, we denote matrices by a bold uppercase letter (e.g., $\mat{A}, \mat{B}$) and vectors by a bold lowercase letter (e.g., $\vec{x}, \vec{y}$). The $n \times n$ identity matrix is denoted by $\mat I_n$ whose $j$th column is represented by the basis vector $\vec e_j$, for $1\le j \le n$. The spectral norm of a matrix $\mat A$ is denoted by $\|\mat A\|_2$, whereas the 2-norm of the vector $\vec x$ is denoted by $\|\vec x\|_2$.

\paragraph{Permutation matrices and selection operator} \label{permutation} We denote a permutation matrix $\mat \Pi \in \reals^{n \times n}$ and create a partition $\mat \Pi = \begin{bmatrix}
    \mat S & \mat P
\end{bmatrix}$.  We define $\mat S = \begin{bmatrix}
    \vec e_{i_1} & \dots & \vec e_{i_k}
\end{bmatrix}\in \reals^{n \times k}$ with $k < n$ to be a \emph{selection operator} whose columns are extracted from the $n\times n$ identity matrix $\mat I_n$, corresponding to the distinct indices $\{i_1, \cdots, i_k\}$. The matrix $\mat P \in \reals^{n \times (n-k)}$ contains the ``unselected'' columns of the identity matrix. That is, $\mat P =  \begin{bmatrix}
    \vec e_{i_{k+1}} \dots \vec e_{i_n}
\end{bmatrix}$ whose columns also correspond to the $\{i_{k+1}, \cdots, i_n \}$ columns of the identity matrix $\mat I_n$.

\paragraph{Eigendecomposition} We denote the eigenvalues of a real symmetric matrix $\mat A \in \reals^{n \times n}$  as $\lambda_1(\mat A)$, $\cdots, \lambda_n(\mat A)$, which are placed in non-increasing order as $\lambda_1(\mat A) \geq \dots \geq \lambda_n(\mat A)$. If a matrix $\mat A \in \reals^{n \times n}$ is symmetric positive semi-definite, we write $\mat A \succeq \mat 0$. 

Now, let $\mat K = \mat V \mat \Lambda \mat V^\top$ be an eigendecomposition of the symmetric positive semi-definite matrix $\mat K \in \R^{n \times n}$, where $\mat V \in \reals^{n \times n}$ is orthogonal and $\mat \Lambda \in \reals^{n \times n}$ is a diagonal matrix whose entries are real-valued and non-negative eigenvalues of $\mat K$ in non-increasing order, $\lambda_1(\mat K) \geq \dots \geq \lambda_n(\mat K)$. Furthermore, we partition the decomposition as
\begin{align} \label{decomp}
    \mat K = \begin{bmatrix}
        \mat V_k & \mat V_\perp
    \end{bmatrix} 
    \begin{bmatrix}
        \mat \Lambda_k &  \mat 0 \\
        \mat 0 & \mat \Lambda_\perp
    \end{bmatrix}
    \begin{bmatrix}
        \mat V_k^\top \\ \mat V_\perp^\top
    \end{bmatrix} = \mat V_k \mat \Lambda_k \mat V_k^\top + \mat V_\perp \mat \Lambda_\perp \mat V_\perp^\top ,
\end{align}
where $\mat V_k \in \reals^{n \times k}, \mat V_\perp \in \reals^{n \times (n-k)}, \mat \Lambda_k \in \reals^{k \times k}, \mat \Lambda_\perp \in \reals^{(n-k) \times (n-k)}$.
We may also define the matrix square root as $\mat K^{1/2} = \mat V \mat \Lambda^{1/2} \mat V^\top$.

\paragraph{Singular Value Decomposition}  The singular value decomposition (SVD) of a matrix $\mat A \in \reals^{m \times n}$ is defined as $\mat A = \mat U \mat \Sigma \mat V\t$, where $\mat U \in \reals^{m \times m}$ and $\mat V \in \reals^{n \times n}$ have orthogonal columns, and $\mat \Sigma \in \reals ^{m \times n}$ is a diagonal matrix whose diagonal entries are the non-increasing nonnegative singular values $\sigma_1(\mat A) \geq \sigma_2(\mat A) \geq \dots \sigma_q(\mat A) \geq 0$ of $\mat A$, where $q = \rank(\mat A)$.
\subsection{Gaussian Processes}\label{GPR} 
Here, we give a brief introduction to GPs. GP regression, or Kriging, is a non-parametric and Bayesian regression technique and has applications including spatial statistics, machine learning, Bayesian optimization, and surrogate modeling \cite{oedreview,gramacy2020surrogates}.  

A GP is a collection of random variables (e.g., indexed by time or space), such that every finite collection of the random variables is a multivariate normal distribution. A GP is typically described by a mean $m(\vec x)$ and covariance function $\kappa (\vec x, \vec x')$ for any input vector $\vec x \in \R^D$, where $D$ is the dimension of the spatial domain. The resulting GP is denoted $\mathcal{GP}(m(\vec{x}), \kappa(\vec{x},\vec{x}'))$. Throughout this paper, we assume that the mean function $m(\vec{x}) = 0$. However, this is not a major limitation of the paper. The covariance or kernel function $\kappa(\vec x,\vec x')$ is user-defined and can embed properties or assumptions of the unknown function \cite{rasmussenGP}. Although our presentation will remain general in that any valid covariance matrix can be used, the numerical experiments will utilize the square exponential kernel, defined in~\eqref{gkernel}.

Suppose we are given a collection of points $\{\vec{x}_i\}_{i=1}^n$ that represent the candidate set of sensor locations. In the sensor placement task, we need to place $k$ sensors. Let the data be collected at indices $C = \{i_1,\dots,i_k\}$ and denote the remaining sensor locations as $T = \{i_{k+1},\dots,i_n\}$. 

At the sensor locations indexed by $C$, we have observational data corrupted by measurement noise, as 
\begin{equation} \label{eq:gp_setup}
    y_{i_j} = g(\vec{x}_{i_j}) + \epsilon_j \qquad 1 \le j \le k,
\end{equation}
where $\epsilon_j\sim \mathcal{N}(0,\eta^2)$ for $1 \le j \le k$ are independent realizations of the  additive measurement noise. The goal is to predict the function $g$ on the remaining points $\{\vec{x}_{i_j}\}_{j=k+1}^n$. To this end, we define the covariance matrix $\mat K \in \R^{n \times n}$ elementwise using the covariance function so that $[\mat K]_{ij} = \kappa(\vec x_i,\vec x_j)$  for $1 \le i,j \le n$, partitioned as 
\begin{align} \label{Kmatdef}
    \mat\Pi\t\mat K \mat\Pi =\begin{bmatrix}
        \mat{K}_{11}  & \mat{K}_{21}\t \\
        \mat{K}_{21} & \mat{K}_{22}
    \end{bmatrix} \quad \text{with sizes} \quad\begin{bmatrix}
        k \times k & k \times (n-k) \\
        (n-k) \times k & (n-k) \times (n-k)
    \end{bmatrix},
\end{align}
where $\mat\Pi$ is a permutation matrix
$$ \mat\Pi := \bmat{\vec{e}_{i_1} & \dots \vec{e}_{i_k} & \vec{e}_{i_{k+1}} & \dots \vec{e}_{i_n}} \in \reals^{n\times n} , $$
$\mat K_{11}$ is the covariance matrix involving the selected measurement locations $ \{\vec x_{i_j}\}_{j = 1}^k$ indexed by $C$. Similarly, the matrix $\mat K_{22}$ is the covariance matrix corresponding to the locations at which data is not collected, and the matrix $\mat K_{21}$ is the cross-covariance depending on the selected and unselected measurement locations.

Let $\vec{g}_p = \begin{bmatrix} g(\vec{x}_{i_{k+1}}) & \dots & g(\vec{x}_{i_n})\end{bmatrix}^\top \in \mathbb{R}^{n-k}$ denote the predicted function values at the locations denoted by $T$. Similarly, let $\vec{y} \in \reals^{k}$ denote the collected data and $$\vec{g}_c = \bmat{g(\vec{x}_{i_1}) & \dots & g(\vec{x}_{i_k})}\t  \in \reals^k$$ be the function values at locations indexed by $C$. For the regression task, the GP prediction for $\vec g_p| \vec y$ is described by the posterior distribution $\vec g_p|\vec y \sim \mathcal{N}\left(\vec{m}_p, \mat\Sigma_p \right)$, where the mean $\vec{m}_p$ and covariance $\mat\Sigma_p$ are defined as~\cite[Equations (2.23) and (2.24)]{rasmussenGP} 
\begin{align}\label{eq:meanrecon}
    \vec m_p  :=  \mat K_{21} [\mat K_{11} + \eta^2 \mat I]^{-1}\vec y, \qquad \mat\Sigma_p  := \mat K_{22} - \mat K_{21}[\mat K_{11} + \eta^2 \mat I]^{-1}\mat K_{21}\t.
\end{align}

\subsection{Expected Information Gain (EIG) and the D-Optimality Criterion} \label{ssec:dopt_intro}
We briefly discuss the notion of information gain in the context of data acquisition, which is related to sensor placement.

The criterion we consider for the sensor placement is the expected information gain (EIG). To define this criterion, let $\mat{K}_{11} \in \reals^{k\times k}$ be invertible.  Let $\nu_{\vec{f}_c}$ and $\nu_{\vec{f}_c|\vec{y}}$ denote the Gaussian measures corresponding to the prior $\vec{f}_c \sim \mathcal{N}(\vec{0}, \mat{K}_{11})$ obtained before any measurements are taken and the posterior distribution (obtained by conditioning on the data $\vec{y}$, takes the form $\vec{f}_c|\vec{y}\sim \mathcal{N}(\mat\Gamma\vec{y}, \mat\Gamma)$ ), where $\mat\Gamma = (\mat{K}_{11}^{-1} + \eta^{-2}\mat{I}_k)^{-1}$ is the posterior covariance matrix. 
The EIG criterion is defined as the expectation over the Kullback-Liebler divergence from the prior $\nu_{\vec{f}_c}$ to the posterior distribution $\nu_{\vec{f}_c|\vec{y}}$, with the expectation taken over the data $\vec{y}$. That is, the EIG takes the form~\cite[Section 1 and Theorem 1]{alexanderian2023briefnotebayesiandoptimality}
\[ \begin{aligned}  \overline{D_{\rm KL}} [\nu_{\vec{f}_c|\vec{y}}|| \nu_{\vec{f}_c}] := & \>  \mathbb{E}_{\vec{y}}\left\{ D_{\rm KL}  [\nu_{\vec{f}_c|\vec{y}}|| \nu_{\vec{f}_c}]\right\} \\ 
= & \> - \frac12\ld(\mat\Gamma) + \frac12\ld(\mat{K}_{11}) =  \frac12 \ld(\mat{I}_k + \eta^{-2}\mat{K}_{11} ). \end{aligned}\]

In the GP literature, this criterion is arrived at using a different line of reasoning. Given a random vector $\vec{z}\in \reals^n$ with density $\pi$, the differential entropy is defined as 
\[ H(\vec{z}) := -\int_{\reals^n} \pi(\vec{z})\log\pi(\vec{z}) d\vec{z}. \]
For a Gaussian random vector $\vec{z}  \sim \mathcal{N}(\boldsymbol\mu,\mat\Sigma)$, the differential entropy takes the form $H(\vec{z})= \frac{1}{2}\ld(2 \pi e \mat \Sigma)$. From~\eqref{eq:gp_setup}, the likelihood $\vec{y}|\vec{f}_c$ is a Gaussian distribution of the form  $\vec{y}|\vec{f}_c \sim \mathcal{N}(\vec{f}_c, \eta^2\mat I_k)$. Since $\vec{f}_c \sim \mathcal{N}(\vec{0}, \mat{K}_{11})$, the marginal distribution $\vec{y} \sim \mathcal{N}(\vec{0}, \mat{K}_{11} + \eta^2\mat I_k)$. One way of defining information gain is to consider the reduction in entropy during the data acquisition process measured as~\cite[Section 6]{gprtutorial}
\[ I(\vec{y}; \vec{f}_c) := H(\vec{y}) - H(\vec{y}|\vec{f}_c) = \frac12 \ld(\mat{I}_k + \eta^{-2}\mat{K}_{11} ). \] 
This is also the mutual information between the random variables $\vec{y}$ and $\vec{y}|\vec{f}_c$. The connection between the mutual information and the EIG was also noted in~\cite[Section 2.2.1]{oedreview}.

\subsection{Rank-Revealing QR Factorization}\label{ssec:qr}
Let $\mat{A}\in \reals^{m\times n}$ with $k \le \text{rank}(\mat{A})$, and consider a pivoted QR factorization of the form 
\begin{equation}
    \mat{A\Pi} =\mat{QR} = \begin{bmatrix}
        \mat Q_1 &\mat Q_2
    \end{bmatrix}\begin{bmatrix}
        \mat R_{11} & \mat R_{12} 
         \\ & \mat R_{22}
    \end{bmatrix}, 
\end{equation}
where $\mat\Pi$ is a permutation matrix, $\mat{Q}_1 \in \R^{m\times k}$ and $\mat{R}_{11}\in \reals^{k\times k}$. The size of all other matrices is clear from the context. Such a QR is called a rank-revealing QR factorization if 
\[ \begin{aligned}
  \frac{\sigma_k(\mat{A})}{p(n,k)} & \le \sigma_{\min}(\mat{R}_{11}) \le \sigma_k(\mat{A}) \\ 
  \sigma_{k+1}(\mat{A}) & \le \sigma_1(\mat{R}_{22}) \le p(n,k)\sigma_{k+1}(\mat{A}),
\end{aligned}
\]
where $p(n,k)$ is a function bounded by a low-order polynomial in $n$ and $k$. Note that the inequalities $\sigma_{\min}(\mat{R}_{11}) \le \sigma_k(\mat{A})$ and $\sigma_{k+1}(\mat{A})  \le \sigma_1(\mat{R}_{22})$ follow from interlacing singular value inequalities, and is true for any pivoted QR factorization. An algorithm, called the strong rank-revealing QR algorithm (sRRQR)~\cite{srrqr}, was proposed to obtain such a rank-revealing QR factorization for matrices with $m\ge n$. However, in this paper, we will need the version for short and wide matrices; see, for example, \cite[Section 3.1.2]{broadbent2010subset}. 

For a matrix $\mat A \in \reals^{k \times n}$ with $\rank(\mat A) =  k < n$ the QR decomposition with column pivoting takes the form 
\begin{align}\label{qrmats}
    \mat A \mat \Pi = \mat Q \mat R
    =  \begin{bmatrix}
        \mat Q_1 &\mat Q_2
    \end{bmatrix} \begin{bmatrix}
        \mat R_{11} & \mat R_{12} 
    \end{bmatrix}
\end{align}
where $\mat Q  = \begin{bmatrix}
    \mat{Q}_1 & \mat{Q}_2
\end{bmatrix}\in \reals^{k \times k}$ is orthogonal and $\mat R_{11} \in \reals^{k \times k}$ is upper triangular and nonsingular. The goal of a pivoted QR decomposition algorithm is to find a permutation matrix $\mat \Pi = \begin{bmatrix}
    \mat S & \mat P
\end{bmatrix} \in \reals^{n \times n}$ that partitions the columns of $\mat{A}$ to expose linear dependence. The sRRQR algorithm in Gu and Eisenstat \cite[Algorithm 4]{srrqr} can be modified appropriately for short and wide matrices to obtain the bounds   
\begin{align}\label{eqn:srrqr_bounds}
    \sigma_i(\mat A) \geq \sigma_i(\mat R_{11}) \geq \frac{\sigma_i(\mat A)}{p(n,k)}, \qquad 1 \le i \le k.
\end{align}
Here, the function $p(n,k) = \sqrt{1+f^2k(n-k)}$ depends on a user-specified parameter $f > 1$ that balances computational complexity and the performance of the algorithm. We note that the computational complexity of this algorithm for $f>1$ is $\mathcal{O}(k^2n\log_{f}n)$ flops. In practice, one may set $f =2$. While the sRRQR algorithm comes with the best known bounds, it is computationally expensive. An alternative approach is to use the QR with column pivoting (QRCP) algorithm from Businger and Golub \cite{cpqr_algorithm}.
This method greedily selects the next column among the trailing columns with the largest norm.  This algorithm has a computational complexity of $\mathcal{O}(nk^2)$ flops and is widely used in numerical experiments as it is the built-in algorithm for the \texttt{qr()} function in \texttt{MATLAB}. By using the algorithm, we obtain a factorization as in~\eqref{qrmats}, satisfying~\eqref{eqn:srrqr_bounds} but with $p(k,n) = \sqrt{n-k}\cdot 2^k$; see~\cite[Theorem 7.2]{srrqr}.

\section{Proposed Methods}\label{sec:methods}

 Before presenting the proposed methods, we refine our problem statement with the now established notation. Let $\{\vec x_i\}_{i=1}^n$ be the coordinate positions of the set of $n$ candidate sensors. 
Specifically, suppose that we have the ability to collect observations at only $k$ out of $n$ locations.
We want to ``optimally" select a subset of $k$ indices from the  $n$ available candidates. The criterion we consider is the EIG or D-optimality criterion as discussed in \Cref{ssec:dopt_intro}.

Thus, our task of choosing $k$ sensors is equivalent to the optimization problem\footnote{Note that in the definition of the D-optimality, we drop the factor of $\frac12$, since the optimizers are equivalent.}
\begin{align} \label{doptsub}
    \max_{\mat S \in \mathcal{P}_k}  \phi_D(\mat S) := \ld (\eta^{-2}\mat S\t\mat K\mat S + \mat I_k),
\end{align}
where $\mat{S} $ is a selection operator, $\mathcal{P}_k$ is the set of $\binom{n}{k}$ selection operators choosing $k$ columns of the identity matrix $\mat I_n$ and $\mat K $ as defined in 
\eqref{Kmatdef}. Note that $\mat{S}\t\mat{K}\mat{S} = \mat{K}_{11}$.  One can interpret \eqref{doptsub} as finding a submatrix of $\mat K$ such that $\phi_D(\mat S)$ is maximized. Our goal is to develop an algorithm to select a subset of $k<n$ input points which maximize $\phi_D$ over the set $\mathcal{P}_k$ with computational complexity that is linear in $n$. We model this problem then as a column subset selection (CSS) problem on the covariance matrix $\mat K$, similar to \cite{eswar2024bayesian}.

To summarize our approach, we use spatial data of the candidate sensor locations to form an $n \times n$ covariance matrix $\mat K$. We then apply CSS techniques to the covariance matrix $\mat K$ to place $k$ sensors. Lastly, observations from those selected sensor locations are used in a GP to predict the underlying function; see \Cref{fig:lightning_slide} for the schematic of our approach.
 We highlight the conceptual GKS algorithm (\Cref{conceptual}) along with three Nystr\"om approximated approaches to placing sensors in a D-optimal sense (\Cref{ssec:nys}). We also discuss an efficient greedy implementation in \Cref{ssec:greedy} as a benchmark for performance in \Cref{sec:results}.

\subsection{Conceptual GKS Algorithm}\label{conceptual}
We follow the GKS framework for column subset selection on the covariance matrix $\mat K$. This conceptual algorithm first performs an SVD on $\mat K$ to obtain the right singular vectors $\mat V \in \reals^{n \times n}$  of $\mat K$ (Note that since $\mat{K}$ is SPSD, the columns of $\mat{V}$ are also the eigenvectors of $\mat{K}$).  We denote by $\mat{V}_k$,  the  $k$ right singular vectors corresponding to the $k$ largest singular values. Then, a pivoted QR decomposition is applied to $\mat V_k\t$ as 
\[ \mat{V}_k\t \mat\Pi = \mat{V}_k\t \bmat{\mat{S}  & \mat{P} } = \mat{Q}_1\bmat{\mat{R}_{11} & \mat{R}_{12}}. \]
We partition $\mat\Pi = \bmat{\mat{S} & \mat{P} }$  and the matrix $\mat{S}$, corresponding to the first $k$ columns of $\mat\Pi$, determines the selection operator. \Cref{alg:cssp} outlines the conceptual GKS framework. Computing the SVD (or eigenvalue decomposition) requires $\mathcal{O}(n^3)$ flops. When using QRCP, the pivoted QR algorithm has a computational complexity of $\mathcal{O}(nk^2)$ flops.  The total computational complexity of this algorithm is, therefore, $\mathcal{O}(n^3+nk^2)$ flops. We call this a conceptual algorithm, since the initial step of computing the SVD is impractical for many large-scale applications.  

\begin{algorithm}[!ht]
\caption{Conceptual GKS Algorithm}\label{alg:cssp}
\begin{algorithmic}[1]
\Require Covariance matrix $\mat K$, number of sensors $k$
\Ensure sensor placement indices $\vec p$
\vspace{0.2cm}
\State $[\sim, \sim, \mat V] \gets \texttt{svd}(\mat K)$  \Comment{SVD of data matrix}  
\State $\mat V_k \gets  \mat V(:,1:k)$ \Comment{Store first $k$ columns of $\mat V$}
\State\label{lin:pivqr} $[\sim, \sim, \vec p] = \texttt{QR}(\mat V_k^\top, \texttt{`vector'})$ \Comment{Vector $\vec p$ determines indices of sensor placements from pivoted QR}
\State $\vec p \gets \vec p(1:k)$ \Comment{Store first $k$ columns of $\vec p$}

\end{algorithmic}

\end{algorithm}

We provide bounds on the objective function defined in \eqref{doptsub} that demonstrate the performance of the GKS approach. 
\begin{theorem}\label{thm1}
For any selection matrix $\mat S  \in \reals^{n \times k}$ such that $\rank(\mat V_k^\top \mat S)=k$,
\begin{align*}
     \ld(\mat I_k + \eta^{-2} \mat \Lambda_k)  \geq \phi_D(\mat{S}_{opt})  \geq \phi_D(\mat{S}) \geq \ld\left(\mat I_k + \frac{\eta^{-2} \mat \Lambda_k}{\|[\mat V_k^\top \mat S]^{-1}\|^2_2}\right),
\end{align*}
where $\mat\Lambda_k$ are the dominant eigenvalues as in~\eqref{decomp} and $\mat S_{opt} \in \reals^{n \times k}$ is the selection operator that selects columns of $\mat K$ which achieve maximal D-optimality.
\end{theorem}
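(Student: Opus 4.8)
The plan is to establish the three inequalities separately, with the main work being the rightmost lower bound. The two upper bounds are essentially definitional: $\phi_D(\mat S_{opt}) \geq \phi_D(\mat S)$ holds because $\mat S_{opt}$ is by definition the maximizer over $\mathcal P_k$, and the leftmost bound $\ld(\mat I_k + \eta^{-2}\mat\Lambda_k) \geq \phi_D(\mat S_{opt})$ should follow from an eigenvalue interlacing / majorization argument: for any selection operator $\mat S$, the eigenvalues of $\mat S\t\mat K\mat S = \mat K_{11}$ interlace those of $\mat K$, so $\lambda_i(\mat K_{11}) \leq \lambda_i(\mat K)$ for $1 \le i \le k$, and since $\ld(\mat I_k + \eta^{-2}\mat M) = \sum_{i=1}^k \log(1 + \eta^{-2}\lambda_i(\mat M))$ is monotone in each eigenvalue, we get $\phi_D(\mat S) \leq \sum_{i=1}^k \log(1 + \eta^{-2}\lambda_i(\mat K)) = \ld(\mat I_k + \eta^{-2}\mat\Lambda_k)$.

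For the rightmost inequality, I would start from the partitioned eigendecomposition $\mat K = \mat V_k\mat\Lambda_k\mat V_k\t + \mat V_\perp\mat\Lambda_\perp\mat V_\perp\t$ in~\eqref{decomp} and write
\[
\mat S\t\mat K\mat S = (\mat V_k\t\mat S)\t\mat\Lambda_k(\mat V_k\t\mat S) + (\mat V_\perp\t\mat S)\t\mat\Lambda_\perp(\mat V_\perp\t\mat S) \succeq (\mat V_k\t\mat S)\t\mat\Lambda_k(\mat V_k\t\mat S),
\]
where the inequality uses $\mat\Lambda_\perp \succeq \mat 0$. Since $\ld(\mat I_k + \eta^{-2}\,\cdot\,)$ is monotone with respect to the positive semidefinite order, it suffices to lower-bound $\ld(\mat I_k + \eta^{-2}(\mat V_k\t\mat S)\t\mat\Lambda_k(\mat V_k\t\mat S))$. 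Writing $\mat W := \mat V_k\t\mat S \in \reals^{k\times k}$, which is invertible by the rank hypothesis, I would use a congruence/similarity manipulation: $(\mat W\t\mat\Lambda_k\mat W)$ has the same eigenvalues as $\mat\Lambda_k\mat W\mat W\t$, and more usefully one can factor $\mat I_k + \eta^{-2}\mat W\t\mat\Lambda_k\mat W = \mat W\t(\mat W^{-\top}\mat W^{-1} + \eta^{-2}\mat\Lambda_k)\mat W$, so that $\ld$ of it equals $2\ld(\mat W) + \ld(\mat W^{-\top}\mat W^{-1} + \eta^{-2}\mat\Lambda_k)$. The cleanest route, though, is probably to bound $\mat W\t\mat\Lambda_k\mat W \succeq \sigma_{\min}(\mat W)^2\,\mat\Lambda_k$ when $\mat\Lambda_k$ and $\mat W$ interact favorably — but that step is not valid in general, so instead I would use $\mat W\t\mat\Lambda_k\mat W = \mat\Lambda_k^{1/2\,\top}\cdots$ no: rather, observe $\mat W\t\mat\Lambda_k\mat W \succeq \lambda_{\min}(\mat W\mat W\t)\,$ is also wrong. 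The correct manipulation is: $\mat W\t\mat\Lambda_k\mat W$ and $\mat\Lambda_k^{1/2}\mat W\mat W\t\mat\Lambda_k^{1/2}$ share eigenvalues, and $\mat\Lambda_k^{1/2}\mat W\mat W\t\mat\Lambda_k^{1/2} \succeq \lambda_{\min}(\mat W\mat W\t)\mat\Lambda_k = \|\mat W^{-1}\|_2^{-2}\mat\Lambda_k$, using $\mat W\mat W\t \succeq \sigma_{\min}(\mat W)^2\mat I_k$ and $\sigma_{\min}(\mat W) = 1/\|\mat W^{-1}\|_2$. Then monotonicity of $\sum_i\log(1+\eta^{-2}\lambda_i)$ in the eigenvalues gives $\ld(\mat I_k + \eta^{-2}\mat W\t\mat\Lambda_k\mat W) \geq \ld(\mat I_k + \eta^{-2}\|\mat W^{-1}\|_2^{-2}\mat\Lambda_k)$, which is exactly the claimed bound since $\mat W = \mat V_k\t\mat S$.

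The main obstacle is getting the middle congruence step exactly right: one must be careful that $\mat W\t\mat\Lambda_k\mat W$ is compared in the Loewner order not to $\sigma_{\min}(\mat W)^2\mat\Lambda_k$ directly (which would require $\mat W$ and $\mat\Lambda_k$ to commute) but rather via the eigenvalue identity $\lambda_i(\mat W\t\mat\Lambda_k\mat W) = \lambda_i(\mat\Lambda_k^{1/2}\mat W\mat W\t\mat\Lambda_k^{1/2})$ together with the Loewner bound $\mat\Lambda_k^{1/2}\mat W\mat W\t\mat\Lambda_k^{1/2} \succeq \sigma_{\min}(\mat W)^2\,\mat\Lambda_k$ (this one \emph{does} hold, by sandwiching $\mat W\mat W\t \succeq \sigma_{\min}(\mat W)^2\mat I_k$ with $\mat\Lambda_k^{1/2}$ on both sides) and finally the Weyl/monotonicity fact that $\prod_i(1+\eta^{-2}\lambda_i(\mat A)) \geq \prod_i(1+\eta^{-2}\lambda_i(\mat B))$ whenever $\mat A \succeq \mat B \succeq \mat 0$. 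Assembling these three facts carefully — eigenvalue-preserving similarity, Loewner monotonicity of the sandwich, and monotonicity of $\ld(\mat I + \eta^{-2}\cdot)$ — is where the proof needs the background matrix-theory lemmas that the paper defers to \Cref{sec:proofs}, and I would cite those rather than reprove them.
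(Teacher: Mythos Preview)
Your proposal is correct and follows essentially the same approach as the paper: the upper bounds are handled via Cauchy interlacing and the definition of $\mat S_{opt}$, and the lower bound via the eigenvalue inequality $\lambda_i(\mat W\t\mat\Lambda_k\mat W)\ge \sigma_{\min}(\mat W)^2\lambda_i(\mat\Lambda_k)$ with $\mat W=\mat V_k\t\mat S$, which is exactly the content of the paper's congruence identity $\mat\Lambda_k=[\mat V_k\t\mat S]^{-\top}(\mat W\t\mat\Lambda_k\mat W)[\mat V_k\t\mat S]^{-1}$ combined with \Cref{matbounds}. The only cosmetic difference is direction: the paper bounds $\lambda_i(\mat\Lambda_k)$ above by $\|[\mat V_k\t\mat S]^{-1}\|_2^2\,\lambda_i(\mat S\t\mat K\mat S)$ directly, whereas you first drop to $\mat W\t\mat\Lambda_k\mat W$ and then bound its eigenvalues below---both are the same inequality read two ways.
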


\begin{proof}
    See \Cref{ssec:gksproof}.
\end{proof}

The first inequality gives an upper bound for the performance of the optimal selection operator $\mat S_{opt}$. The second inequality is obvious, and the last inequality quantifies the performance of the selection operator in terms of a lower bound involving the quantity $\|[\mat V_k^\top \mat S]^{-1}\|_2$. By assumption, $\mat V_k^\top \mat S$ is invertible and since $\mat{V}_k$ has orthonormal columns $\|[\mat V_k^\top \mat S]^{-1}\|_2\ge 1$. Thus, \Cref{thm1} suggests the heuristic that minimizing $\|[\mat V_k^\top \mat S]^{-1}\|_2$ can maximize $\phi_D(\mat{S})$. In fact, if we can find a selection operator $\mat{S}$ such that  $\|[\mat V_k^\top \mat S]^{-1}\|_2=1$, then $\phi_D(\mat{S}) = \phi_D(\mat{S}_{opt})$.

 In the second step of GKS, we may use sRRQR for the pivoted QR. The following corollary quantifies the performance of the algorithm. 
 
 \begin{corollary}\label{cor:dopt}
     Let $\mat{S}$ be the output of \Cref{alg:cssp} but with Line~\ref{lin:pivqr} replaced by sRRQR factorization~\cite[Algorithm 4]{srrqr} with parameter $f >  1$. Then, 
 \begin{align}\label{eqn:rrqr_bounds}
    \phi_D(\mat{S}_{opt}) \ge \phi_D(\mat{S})  \geq \ld\left(\mat I_k + \frac{\eta^{-2} \mat \Lambda_k}{1 + f^2k(n-k)}\right).
 \end{align}
 \end{corollary}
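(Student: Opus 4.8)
The plan is to derive \Cref{cor:dopt} as an immediate consequence of \Cref{thm1} combined with the rank-revealing bounds for the short-and-wide sRRQR factorization, equation~\eqref{eqn:srrqr_bounds}. The key observation is that \Cref{thm1} already reduces everything to controlling the single scalar quantity $\|[\mat V_k^\top \mat S]^{-1}\|_2$; all that remains is to show that running sRRQR on $\mat V_k^\top$ produces a selection operator $\mat S$ for which this quantity is at most $\sqrt{1 + f^2 k(n-k)}$, and then substitute.

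First I would note that $\mat V_k^\top \in \reals^{k\times n}$ has $\rank(\mat V_k^\top) = k$, so the short-and-wide pivoted QR factorization in~\eqref{qrmats} applies: $\mat V_k^\top \mat\Pi = \mat Q_1 \bmat{\mat R_{11} & \mat R_{12}}$ with $\mat R_{11} \in \reals^{k\times k}$ nonsingular, and $\mat S$ is the first $k$ columns of $\mat\Pi$. Then $\mat V_k^\top \mat S = \mat Q_1 \mat R_{11}$, so $[\mat V_k^\top \mat S]^{-1} = \mat R_{11}^{-1}\mat Q_1^\top$, and since $\mat Q_1$ is orthogonal, $\|[\mat V_k^\top \mat S]^{-1}\|_2 = \|\mat R_{11}^{-1}\|_2 = 1/\sigma_{\min}(\mat R_{11}) = 1/\sigma_k(\mat R_{11})$. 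Next, because the singular values of $\mat V_k^\top$ are all equal to $1$ (its rows are orthonormal), the sRRQR bound~\eqref{eqn:srrqr_bounds} with $i = k$ gives $\sigma_k(\mat R_{11}) \ge \sigma_k(\mat V_k^\top)/p(n,k) = 1/\sqrt{1+f^2 k(n-k)}$. Therefore $\|[\mat V_k^\top \mat S]^{-1}\|_2^2 \le 1 + f^2 k(n-k)$.

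Finally, I would plug this into the lower bound of \Cref{thm1}. Since $\ld(\mat I_k + \eta^{-2}\mat\Lambda_k / c)$ is monotonically decreasing in $c > 0$ (each diagonal entry $1 + \eta^{-2}\lambda_i(\mat K)/c$ decreases as $c$ grows, and $\ld$ of a diagonal matrix is the sum of logs of the entries), replacing $\|[\mat V_k^\top\mat S]^{-1}\|_2^2$ by the upper bound $1 + f^2 k(n-k)$ only decreases the expression, yielding
\[
\phi_D(\mat S) \ge \ld\left(\mat I_k + \frac{\eta^{-2}\mat\Lambda_k}{\|[\mat V_k^\top \mat S]^{-1}\|_2^2}\right) \ge \ld\left(\mat I_k + \frac{\eta^{-2}\mat\Lambda_k}{1 + f^2 k(n-k)}\right),
\]
while $\phi_D(\mat S_{opt}) \ge \phi_D(\mat S)$ is already contained in \Cref{thm1}. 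This establishes~\eqref{eqn:rrqr_bounds}.

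There is no serious obstacle here; the corollary is essentially a substitution. The only points requiring a small amount of care are: (i) confirming that $\rank(\mat V_k^\top \mat S) = k$ so that \Cref{thm1} is applicable — this holds precisely because sRRQR returns a nonsingular $\mat R_{11}$; (ii) the identity $\|[\mat V_k^\top \mat S]^{-1}\|_2 = 1/\sigma_k(\mat R_{11})$, which uses orthogonality of $\mat Q_1$; and (iii) the monotonicity of $\ld$ in the scaling parameter, which lets us pass from the $\mat S$-dependent bound to the clean closed-form bound. One might also remark in passing that an entirely analogous corollary holds for QRCP with $p(n,k) = \sqrt{n-k}\cdot 2^k$ in place of $\sqrt{1+f^2k(n-k)}$, by the same argument using the last sentence of \Cref{ssec:qr}.
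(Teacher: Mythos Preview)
Your proposal is correct and follows essentially the same route as the paper: the paper invokes \Cref{lem:srrqrbounds} to obtain $\|[\mat V_k^\top \mat S]^{-1}\|_2 \le \sqrt{1+f^2k(n-k)}$ and then plugs into \Cref{thm1}, and you simply unpack that lemma by hand using~\eqref{eqn:srrqr_bounds} together with $\sigma_k(\mat V_k^\top)=1$. The extra remarks about nonsingularity of $\mat R_{11}$, orthogonality of $\mat Q_1$, and monotonicity of the log-determinant are all valid and make explicit what the paper leaves implicit.
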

 \begin{proof}
     As in \Cref{lem:srrqrbounds}, we obtain the bound $\|[\mat{V}_k\t\mat{S}]^{-1}\|_2 \le  \sqrt{1 + f^2k(n-k)}$. The result follows by plugging this inequality into the result of \Cref{thm1}.
 \end{proof}

 However, the computational cost of the pivoted QR step in Line~\ref{lin:pivqr} of \Cref{alg:cssp} now becomes $\mathcal{O}(k^2n\log_f(n))$ flops. Thus, there is a tradeoff between a good lower bound and a high computational cost. Although QRCP greedily maximizes the volume of $\mat{V}_k\t\mat{S}$, we have found through numerical experiments that it does a good job of minimizing $\|[\mat V_k^\top \mat S]^{-1}\|_2$.

 As mentioned earlier, we consider this algorithm to be conceptual, since there is an underlying assumption that we can explicitly form and factorize the covariance matrix $\mat K$. In practice, if the number of sensors $n$ is sufficiently large, then the covariance matrix is too large to form. To this end, we consider Nystr\"om approximations of the covariance matrix. 

\subsection{Methods based on Nystr\"om Approximations}\label{ssec:nys}

The Nystr\"om approximation is a low-rank approximation of a positive semi-definite matrix. We follow the definition of the Nystr\"om approximation from \cite[Section 14]{rnla}.
Given a test matrix $\mat \Omega \in \reals^{n \times \ell}$, a Nystr\"om approximation of $\mat{K}$ is defined as
\begin{align}\label{eqn:nystrom}
    \widehat{\mat K} := (\mat K \mat \Omega) (\mat \Omega^\top \mat K \mat \Omega)^\dagger (\mat K \mat \Omega)^\top.
\end{align}
Note that the resulting approximation $\widehat{\mat K}$ is still symmetric positive semi-definite, i.e.,  $\widehat{\mat K} \succeq \mat{0}$~\cite[Lemma 2.1]{frangella2023randomized}. There are different flavors of the Nystr\"om approximations: 
\begin{enumerate}
    \item \textbf{Random projection}:  In this approach, we take the test matrix to be a Gaussian random matrix and to form the sketch $\mat{Y} = \mat{K \Omega}$. This approach is beneficial if the kernel matrix cannot be formed directly, but can only be accessed using matrix-vector products. This is the approach used in \Cref{ssec:nysgks}. 

    \item \textbf{Pivoted Cholesky}: In this approach, a column (or a block of columns) of the kernel matrix is selected, either greedily or randomly, to construct an approximation to the kernel matrix. As with random projection, this method shares the advantage that the entire matrix $\mat{K}$ need not be formed, but only formed as needed (i.e., a single column or a block of columns). We explore these approaches in \Cref{ssec:pivcholesky}.

    \item \textbf{Column sampling}: Another set of approaches involves having $\mat\Omega$ consist of columns of the identity matrix (perhaps with some additional scaling), which are sampled according to a certain probability distribution. Examples of these approaches include uniform sampling, leverage score sampling, ridge leverage score sampling, etc. The advantage of the sampling-based approaches is that the kernel matrix $\mat{K}$ need not be formed explicitly, but the columns can be generated and selected as needed. However, a downside of these approaches is that determining the sampling probabilities can be quite expensive (except for, perhaps, the uniform sampling distribution). We do not pursue this approach but it can be a viable alternative. 
\end{enumerate}

We provide bounds on the leading $k$ eigenvalues of a Nystr\"om approximated matrix in \Cref{thm2}. This may be of independent interest beyond this paper.  
\begin{theorem}
\label{thm2}

Let $\wh{\mat{K}}$ be the Nystr\"om approximation to $\mat{K}$ defined as in~\eqref{eqn:nystrom}. Further, define
\begin{align}
    \begin{bmatrix}
        \mat \Omega_k \\ \mat \Omega_{n-k}
    \end{bmatrix} : = \begin{bmatrix}
        \mat V_k^\top \mat \Omega \\ \mat V_\perp^\top \mat \Omega 
    \end{bmatrix}
\end{align}
with $\mat V_k, \mat V_\perp$ defined in~\eqref{decomp} and assume that $\rank(\mat \Omega_k)= k$.
Then, we have the following bounds on the eigenvalues of $\wh{\mat K}$ 
\begin{align} \label{nystrombounds}
    \lambda_i(\mat K) \geq \lambda_i(\wh{\mat K}) \geq \frac{\lambda_i(\mat K)}{1 + \lambda_i(\mat \Lambda_k^{-1})\|\mat \Lambda^{1/2}_\perp \mat \Omega_{n-k} \mat \Omega_k^\dagger\|_2^2} \qquad 1 \le i \le k.
\end{align}

\end{theorem}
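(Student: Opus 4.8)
\textbf{Proof proposal for Theorem \ref{thm2}.}

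The upper bound $\lambda_i(\wh{\mat K}) \le \lambda_i(\mat K)$ is the easy half: since $\wh{\mat K}$ is the Nystr\"om approximation, $\mat K - \wh{\mat K} \succeq \mat 0$ (this is the standard fact that $\wh{\mat K}$ is a partial Cholesky/Schur-complement-type approximation, and appears e.g.\ in \cite{frangella2023randomized}), so the Weyl/Courant--Fischer monotonicity of eigenvalues under the Loewner order gives the claim immediately. The substance is in the lower bound, and the plan is to reduce it to a statement about a \emph{specific} rank-$k$ PSD matrix that lies below $\wh{\mat K}$ in the Loewner order and whose eigenvalues we can compute or bound directly. Concretely, I would first observe that $\wh{\mat K} = (\mat K\mat\Omega)(\mat\Omega\t\mat K\mat\Omega)^\dagger(\mat K\mat\Omega)\t$ is the orthogonal projection (in the $\mat K$-inner product, when $\mat K$ is invertible; in general via pseudoinverses) of $\mat K$ onto the range of $\mat K\mat\Omega$; equivalently $\wh{\mat K} = \mat K^{1/2}\mat\Pi_{\mat K^{1/2}\mat\Omega}\mat K^{1/2}$ where $\mat\Pi_{\mat K^{1/2}\mat\Omega}$ is the orthogonal projector onto $\range(\mat K^{1/2}\mat\Omega)$. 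From here the standard trick is: for \emph{any} matrix $\mat W$ with $\range(\mat W)\subseteq\range(\mat K^{1/2}\mat\Omega)$, one has $\mat\Pi_{\mat K^{1/2}\mat\Omega} \succeq \mat\Pi_{\mat W}$, hence $\wh{\mat K}\succeq \mat K^{1/2}\mat\Pi_{\mat W}\mat K^{1/2}$.

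The key step is then to choose $\mat W$ cleverly so that $\mat K^{1/2}\mat\Pi_{\mat W}\mat K^{1/2}$ is explicitly diagonalizable in a basis compatible with the eigenvectors $\mat V_k$. Working in the eigenbasis of $\mat K$, write $\mat K^{1/2}\mat\Omega = \mat V\,\mat\Lambda^{1/2}\bmat{\mat\Omega_k \\ \mat\Omega_{n-k}}$, and choose $\mat W$ so that its columns correspond (after the $\mat V$ rotation) to the column space of $\bmat{\mat I_k \\ \mat\Lambda_\perp^{1/2}\mat\Omega_{n-k}\mat\Omega_k^\dagger\mat\Lambda_k^{-1/2}}$ — that is, $\mat W = \mat V\,\mat\Lambda^{1/2}\mat\Omega\,\mat\Omega_k^\dagger$ (times $\mat\Lambda_k^{-1/2}$), which indeed has range inside $\range(\mat K^{1/2}\mat\Omega)$ because it is $\mat K^{1/2}\mat\Omega$ right-multiplied by a matrix. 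With this choice, $\mat\Pi_{\mat W}$ in the $\mat V$-rotated coordinates is the projector onto $\range\bmat{\mat I_k \\ \mat B}$ with $\mat B := \mat\Lambda_\perp^{1/2}\mat\Omega_{n-k}\mat\Omega_k^\dagger\mat\Lambda_k^{-1/2}$, which has the closed form $\bmat{\mat I_k \\ \mat B}(\mat I_k + \mat B\t\mat B)^{-1}\bmat{\mat I_k & \mat B\t}$; then $\mat K^{1/2}\mat\Pi_{\mat W}\mat K^{1/2}$, after conjugating back, has a $k\times k$ leading block of the form $\mat\Lambda_k^{1/2}(\mat I_k + \mat B\t\mat B)^{-1}\mat\Lambda_k^{1/2}$ (the remaining blocks only add PSD contributions, so dropping them preserves the lower bound). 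Applying eigenvalue monotonicity once more, $\lambda_i(\wh{\mat K}) \ge \lambda_i\!\bigl(\mat\Lambda_k^{1/2}(\mat I_k + \mat B\t\mat B)^{-1}\mat\Lambda_k^{1/2}\bigr)$, and then a routine bound $\lambda_i(\mat\Lambda_k^{1/2}\mat M\mat\Lambda_k^{1/2}) \ge \lambda_i(\mat\Lambda_k)\lambda_{\min}(\mat M)$ together with $\lambda_{\min}\bigl((\mat I_k+\mat B\t\mat B)^{-1}\bigr) = (1 + \|\mat B\|_2^2)^{-1}$ and $\|\mat B\|_2 \le \|\mat\Lambda_k^{-1/2}\|_2\,\|\mat\Lambda_\perp^{1/2}\mat\Omega_{n-k}\mat\Omega_k^\dagger\|_2 = \lambda_i(\mat\Lambda_k^{-1})^{1/2}\cdots$ — here I need to be slightly more careful to get the index-$i$ dependent factor $\lambda_i(\mat\Lambda_k^{-1})$ rather than the worst-case $\lambda_k(\mat\Lambda_k^{-1})=\lambda_1(\mat\Lambda_k^{-1})$.

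To recover the sharper index-dependent constant, instead of bounding $\lambda_{\min}$ of the whole inverse I would use the Loewner ordering $(\mat I_k + \mat B\t\mat B)^{-1} \succeq (1 + \|\mat B\|_2^2)^{-1}\mat I_k$ is too lossy; better to write $\mat\Lambda_k^{1/2}(\mat I_k+\mat B\t\mat B)^{-1}\mat\Lambda_k^{1/2} \succeq (\mat\Lambda_k^{-1} + \mat\Lambda_k^{-1/2}\mat B\t\mat B\mat\Lambda_k^{-1/2})^{-1}$ (push-through), and then bound the inner matrix $\mat\Lambda_k^{-1/2}\mat B\t\mat B\mat\Lambda_k^{-1/2} = \mat\Lambda_k^{-1/2}(\mat\Lambda_\perp^{1/2}\mat\Omega_{n-k}\mat\Omega_k^\dagger)\t(\cdots)\mat\Lambda_k^{-1/2}\cdot\mat\Lambda_k^{-1} \preceq \|\mat\Lambda_\perp^{1/2}\mat\Omega_{n-k}\mat\Omega_k^\dagger\|_2^2\,\mat\Lambda_k^{-1}$ (careful with which $\mat\Lambda_k^{-1}$ factors appear — this is exactly the bookkeeping that produces $\lambda_i(\mat\Lambda_k^{-1})$ paired with $\lambda_i(\mat K)$ in the final display), invert the order-reversing inequality, and read off $\lambda_i$ via monotonicity. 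The main obstacle, and where I would spend the most care, is precisely this last bit of bookkeeping: getting the term-by-term (rather than norm-worst-case) dependence so that $\lambda_i(\mat\Lambda_k^{-1})$ rather than $\|\mat\Lambda_k^{-1}\|_2$ appears — this requires keeping the matrix $\mat\Lambda_k$ factored on both sides and applying the bound $\lambda_i\bigl((\mat\Lambda_k^{-1} + c\,\mat\Lambda_k^{-1})^{-1}\bigr) = \lambda_i(\mat\Lambda_k)/(1+c)$ type identity in the right form, rather than a cruder two-sided norm bound. The remaining rank assumption $\rank(\mat\Omega_k)=k$ is exactly what guarantees $\mat\Omega_k^\dagger$ acts as a genuine right inverse so that $\range(\mat W)$ has dimension $k$ and all the projector manipulations go through; I would flag its use at the point where $\mat\Omega_k\mat\Omega_k^\dagger = \mat I_k$ is invoked.
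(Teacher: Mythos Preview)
Your strategy is exactly the paper's: write $\wh{\mat K}=\mat K^{1/2}\mat P_{\mat Z}\mat K^{1/2}$ with $\mat Z=\mat K^{1/2}\mat\Omega$, replace $\mat P_{\mat Z}$ by $\mat P_{\wh{\mat Z}}$ where $\wh{\mat Z}=\mat Z\mat\Omega_k^\dagger\mat\Lambda_k^{-1/2}=\mat V\bmat{\mat I_k\\\mat F}$ with $\mat F=\mat\Lambda_\perp^{1/2}\mat\Omega_{n-k}\mat\Omega_k^\dagger\mat\Lambda_k^{-1/2}$ (your $\mat B$), extract the $(1,1)$ block $\mat H=\mat\Lambda_k^{1/2}(\mat I_k+\mat F\t\mat F)^{-1}\mat\Lambda_k^{1/2}$, and bound its eigenvalues. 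Two points need repair, though.

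First, your justification ``the remaining blocks only add PSD contributions, so dropping them preserves the lower bound'' is not correct: for a PSD matrix $\bmat{\mat H & \mat C\\\mat C\t & \mat D}$ it is \emph{not} generally true that it dominates $\bmat{\mat H & \mat 0\\\mat 0 & \mat 0}$ in the Loewner order. The correct tool here is Cauchy's interlacing theorem applied to the principal submatrix $\mat H$ of $\mat V\t(\mat K^{1/2}\mat P_{\wh{\mat Z}}\mat K^{1/2})\mat V$, which immediately gives $\lambda_i(\mat K^{1/2}\mat P_{\wh{\mat Z}}\mat K^{1/2})\ge\lambda_i(\mat H)$ for $1\le i\le k$.

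Second, your final bookkeeping is more tangled than necessary, and the displayed equality/inequality for $\mat\Lambda_k^{-1/2}\mat B\t\mat B\mat\Lambda_k^{-1/2}$ is wrong as written (the factors don't line up, and the bound $\preceq c\,\mat\Lambda_k^{-1}$ you claim would only yield $\lambda_i(\mat H)\ge\lambda_i(\mat K)/(1+c)$, losing the index dependence). The clean route, which is what the paper does, is to bound $\mat F\t\mat F$ directly: since $\mat F=(\mat\Lambda_\perp^{1/2}\mat\Omega_{n-k}\mat\Omega_k^\dagger)\,\mat\Lambda_k^{-1/2}$, Lemma~\ref{matbounds} gives $\mat F\t\mat F\preceq\|\mat\Lambda_\perp^{1/2}\mat\Omega_{n-k}\mat\Omega_k^\dagger\|_2^2\,\mat\Lambda_k^{-1}$. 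Then $(\mat I_k+\mat F\t\mat F)^{-1}\succeq(\mat I_k+c\,\mat\Lambda_k^{-1})^{-1}$, conjugate by $\mat\Lambda_k^{1/2}$, and read off the eigenvalues of the resulting diagonal matrix $\mat\Lambda_k^{1/2}(\mat I_k+c\,\mat\Lambda_k^{-1})^{-1}\mat\Lambda_k^{1/2}$ directly---no inversion of $\mat H$ needed.
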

\begin{proof}
    See \Cref{ssec:nysbound}.
\end{proof}

After obtaining a Nystr\"om approximation $\mat{K} \approx \wh{\mat{K}} = \mat{FF}\t$, we can determine the sensor placements by applying the GKS algorithm to $\wh{\mat{K}}$. However, since $\wh{\mat{K}}$ has a low-rank structure, the GKS step can be applied efficiently without forming $\wh{\mat{K}}$ directly.

\subsubsection{NysGKS: Nystr\"om combined with GKS}\label{ssec:nysgks}
In this approach, we combine the random projection approach for computing a Nystr\"om approximation with the GKS framework. To compute the Nystr\"om approximation, we draw a random test matrix $\mat \Omega \in \reals^{n \times (k+p)}$. Here $p \ge 0$ is a small oversampling parameter. We can construct the Nystr\"om approximation $\wh{\mat{K}}$ using~\eqref{eqn:nystrom}. The NysGKS Algorithm, then applies the GKS approach (\Cref{alg:cssp})  on the approximation $\wh{\mat{K}}$ to $\mat{K}$.

\begin{algorithm}[!ht]

\caption{NysGKS Algorithm}\label{alg:nys}
\begin{algorithmic}[1]
\Require  Kernel matrix $\mat K$,  number of sensors $k$,  and random test matrix $\mat \Omega \in \reals^{n \times (k+p)}$ where $p$ is the oversampling parameter
\Ensure Sensor placement indices $\vec p$
\vspace{0.2cm}
        \State $[\mat\Omega,\sim] = \texttt{QR($\mat \Omega, 0$)}$ \Comment{Thin QR Decomposition}
        \State $\mat Y = \mat K \mat \Omega$ 
        \State $\nu =\sqrt{n} \times 10^{-6}$ \Comment{Compute shift for stability}
        \State $\mat Y_\nu = \mat Y + \nu \mat \Omega$
        \State $\mat C = \texttt{chol($\mat \Omega\t\mat{Y_\nu}$)}$ \Comment{Compute Cholesky Decomposition}
        \State $\mat B = \mat{Y_\nu}/\mat C$ \Comment{Solve for a least-squares solution for $\mat B$ in $\mat B \mat C = \mat Y_\nu$}
        \State $[\wh{\mat U}, \sim, \sim] = \texttt{svd($\mat B$,0)}$ \Comment{Thin SVD}

\State $\wh{\mat U}_k \gets  \wh{\mat U}(:,1:k)$ \Comment{Store first $k$ columns of $\wh{\mat U}$}
\State $[\sim, \sim, \vec p] = \texttt{QR}(\wh{\mat U}_k^\top, \texttt{`vector'})$ \Comment{Get permutation matrix in vector form}
\State $\vec p \gets \vec p(1:k)$ \Comment{Store first $k$ columns of $\vec p$}

\end{algorithmic}
   
\end{algorithm}

However, there are two issues with this approach. First, a direct application of the formula is known to be numerically unstable. Second, forming $\wh{\mat{K}}$ is not computationally advisable. To remedy both issues,  we use the approach from~\cite[Algorithm 2.1]{frangella2023randomized} in Lines 1-7 of \Cref{alg:nys} to produce an eigendecomposition of $\wh{\mat{K}} = \wh{\mat{U}}\wh{\mat\Lambda}\wh{\mat{U}}^\top$ in factored format, with the eigenvalues of $\wh{\mat{K}}$ in decreasing order. Lines 3-4 of \Cref{alg:nys} maintain numerical stability by shifting the eigenvalues of $\mat Y$ away from 0 in machine precision. From here, we can obtain the basis $\wh{\mat{U}}_k$ by taking the first $k$ columns of $\wh{\mat{U}}$, and apply Lines 2-4 of \Cref{alg:cssp}. Details of this approach are given in \Cref{alg:nys}. In our numerical experiments, we use a Gaussian random test matrix\footnote{This means the entries are independent Gaussian random variables with zero mean and unit variance.} and an oversampling parameter $10 \leq p \leq 20$.

The dominant cost of \Cref{alg:nys} is in Line 2 in forming the sketch $\mat{K\Omega}$. To form this sketch na\"ively, the required computational cost is $\mathcal{O}(n^2k)$ flops.  The cost of the remaining steps in the algorithm are $\mathcal{O}(nk^2)$ flops, assuming that QRCP factorization is used in Line 9 of \Cref{alg:nys}. We may further reduce the computational cost of forming the sketch $\mat{K\Omega}$, in practice. If the points $\{\vec{x}_j\}_{j=1}^n$ are in  2 or 3 dimensions, and the kernels are sufficiently smooth, we can compute the matrix-vector products $\vec{x} \mapsto \vec{Kx}$ in $\mathcal{O}(n)$ or $\mathcal{O}(n\log n)$ flops using techniques such as $\mathcal{H}^2$-matrices and related techniques such as non-uniform FFT, fast multipole method, etc. In our numerical experiments, we use an implementation of $\mathcal{H}^2$-matrices in H2Pack~\cite{h2cite1,h2cite2}. The cost of the matrix-vector multiplication with $\mat{K}$ is now $\mathcal{O}(n)$ flops. Thus, the overall complexity of NysGKS with $\mathcal{H}^2$-matrices is $\mathcal{O}(nk^2)$ flops.  

In \Cref{thm3}, we provide probabilistic lower bounds of the D-optimality criterion using the NysGKS algorithm.  
\begin{theorem}\label{thm3}
 Let $\mat{S}$ be the output of \Cref{alg:nys} with a {Gaussian} test matrix $\mat \Omega \in \reals^{n \times (k+p)}$ with $p \ge 4$, and sRRQR with parameter $f > 1$ used in Line 9. Then, with probability at least $1- 3e^{-p}$,
\begin{align*}
     \phi_D(\mat{S}_{\rm opt}) \ge \phi_D({\mat{S}})  
    \geq \ld\left(\mat I_k + \frac{\eta^{-2}{\mat \Lambda}_k \mat{D}^{-1}}{(1+f^2k(n-k))}\right) \end{align*}
with $\mat{D} := \mat{I} +\mathrm{diag}\left(\frac{\gamma}{\lambda_k(\mat{K})},\dots, \frac{\gamma}{\lambda_1(\mat{K})}\right)$ and $$\gamma := \left[  \|\mat \Lambda_\perp\|_2^{1/2} \left( 16 \sqrt{1+ \frac{k}{p+1}}\right) + \sqrt{\trace(\mat \Lambda_\perp)} \frac{8\sqrt{k+p}}{(p+1)} \right]^2.$$
\end{theorem}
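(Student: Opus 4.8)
The plan is to combine three essentially deterministic facts --- the Loewner domination $\mat 0 \preceq \wh{\mat K} \preceq \mat K$ of the randomized Nystr\"om approximation, \Cref{thm1} (in the sRRQR form of \Cref{cor:dopt}) applied verbatim to the SPSD matrix $\wh{\mat K}$, and the eigenvalue floor of \Cref{thm2} --- and then to control the single random quantity $\|\mat\Lambda_\perp^{1/2}\mat\Omega_{n-k}\mat\Omega_k^\dagger\|_2$ by a Gaussian tail bound of Halko--Martinsson--Tropp (HMT) type. All but this last ingredient are deterministic (or hold almost surely), so no union bound is needed and the failure probability is entirely that of the random term.

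First I would pass to the Nystr\"om matrix. Let $\wh{\mat K}$ be the approximation \eqref{eqn:nystrom} built from the Gaussian $\mat\Omega$ (the numerically-motivated shift $\nu$ of \Cref{alg:nys} plays no role and we set $\nu=0$); the orthonormalization in Line~1 does not change $\wh{\mat K}$ --- Nystr\"om approximations are invariant under an invertible change of test matrix --- so we take $\mat\Omega$ to be the raw Gaussian throughout, and $\wh{\mat U}_k$ (Lines~1--8) is the leading $k$-eigenvector block of $\wh{\mat K}$, which is well defined because $\rank(\mat\Omega_k)=k$ almost surely and, by \Cref{thm2} with $\lambda_k(\mat K)>0$, $\lambda_k(\wh{\mat K})>0$. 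Since $\wh{\mat K}\preceq\mat K$ is standard (see, e.g., \cite{frangella2023randomized}), $\mat S\t\wh{\mat K}\mat S\preceq\mat S\t\mat K\mat S$ and monotonicity of $\ld$ on positive definite matrices gives $\phi_D(\mat S)=\ld(\mat I_k+\eta^{-2}\mat S\t\mat K\mat S)\ge\ld(\mat I_k+\eta^{-2}\mat S\t\wh{\mat K}\mat S)$. Moreover, applying a pivoted QR --- here sRRQR with parameter $f$ --- to $\wh{\mat U}_k\t$ (Line~9) is exactly the conceptual GKS algorithm (\Cref{alg:cssp}) run on $\wh{\mat K}$, so \Cref{thm1} with $\wh{\mat K}$ in place of $\mat K$, together with $\|[\wh{\mat U}_k\t\mat S]^{-1}\|_2\le\sqrt{1+f^2k(n-k)}$ from \Cref{lem:srrqrbounds} (as used in \Cref{cor:dopt}), yields
\[ \ld\!\left(\mat I_k+\eta^{-2}\mat S\t\wh{\mat K}\mat S\right)\ \ge\ \ld\!\left(\mat I_k+\frac{\eta^{-2}\wh{\mat\Lambda}_k}{1+f^2k(n-k)}\right),\qquad \wh{\mat\Lambda}_k:=\mathrm{diag}\!\left(\lambda_1(\wh{\mat K}),\dots,\lambda_k(\wh{\mat K})\right). \]

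It then remains to put a floor under $\wh{\mat\Lambda}_k$. \Cref{thm2} gives, for $1\le i\le k$ and using $\lambda_i(\mat\Lambda_k^{-1})=1/\lambda_{k+1-i}(\mat K)$,
\[ \lambda_i(\wh{\mat K})\ \ge\ \frac{\lambda_i(\mat K)}{1+\lambda_{k+1-i}(\mat K)^{-1}\,\|\mat\Lambda_\perp^{1/2}\mat\Omega_{n-k}\mat\Omega_k^\dagger\|_2^2}. \]
The key point is that $\|\mat\Lambda_\perp^{1/2}\mat\Omega_{n-k}\mat\Omega_k^\dagger\|_2$ is exactly the cross term $\|\mat\Sigma_2\mat\Omega_2\mat\Omega_1^\dagger\|_2$ of the spectral-norm randomized-SVD analysis applied to $\mat A=\mat K^{1/2}$: the right singular vectors of $\mat A$ are $\mat V$, so $\mat\Sigma_2=\mat\Lambda_\perp^{1/2}$, $\mat\Omega_1=\mat V_k\t\mat\Omega=\mat\Omega_k$, $\mat\Omega_2=\mat V_\perp\t\mat\Omega=\mat\Omega_{n-k}$, $\sigma_{k+1}(\mat A)=\|\mat\Lambda_\perp\|_2^{1/2}$, and $\big(\sum_{j>k}\sigma_j(\mat A)^2\big)^{1/2}=\sqrt{\trace(\mat\Lambda_\perp)}$. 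The Gaussian tail estimate underlying \cite[Theorem~10.8]{rnla} (which needs $p\ge4$) then gives, with probability at least $1-3e^{-p}$,
\[ \|\mat\Lambda_\perp^{1/2}\mat\Omega_{n-k}\mat\Omega_k^\dagger\|_2\ \le\ 16\sqrt{1+\tfrac{k}{p+1}}\,\|\mat\Lambda_\perp\|_2^{1/2}+\tfrac{8\sqrt{k+p}}{p+1}\sqrt{\trace(\mat\Lambda_\perp)}\ =\ \sqrt{\gamma}. \]
On this event $\lambda_i(\wh{\mat K})\ge\lambda_i(\mat K)/(1+\gamma/\lambda_{k+1-i}(\mat K))$ for $1\le i\le k$, i.e.\ (both sides diagonal) $\wh{\mat\Lambda}_k\succeq\mat\Lambda_k\mat D^{-1}$ with $\mat D=\mat I+\mathrm{diag}(\gamma/\lambda_k(\mat K),\dots,\gamma/\lambda_1(\mat K))$. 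Monotonicity of $\ld$ then upgrades the preceding display to $\ld(\mat I_k+\eta^{-2}\mat S\t\wh{\mat K}\mat S)\ge\ld\!\big(\mat I_k+\eta^{-2}\mat\Lambda_k\mat D^{-1}/(1+f^2k(n-k))\big)$, and chaining this with the first step and the trivial $\phi_D(\mat S_{\rm opt})\ge\phi_D(\mat S)$ gives the stated bound.

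The main obstacle is the probabilistic estimate on $\|\mat\Lambda_\perp^{1/2}\mat\Omega_{n-k}\mat\Omega_k^\dagger\|_2$: one has to recognize it as exactly the off-diagonal term bounded in the spectral-norm randomized-SVD analysis of $\mat K^{1/2}$, so that the published HMT estimate delivers \emph{precisely} the constants in $\gamma$ and the failure probability $3e^{-p}$ valid for $p\ge4$. A secondary subtlety is the index reversal $\lambda_i(\mat\Lambda_k^{-1})=1/\lambda_{k+1-i}(\mat K)$, which is what makes the diagonal of $\mat D$ run from $\gamma/\lambda_k(\mat K)$ down to $\gamma/\lambda_1(\mat K)$ and must be matched against the ordering of the entries of $\wh{\mat\Lambda}_k$.
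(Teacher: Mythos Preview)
Your proposal is correct and follows essentially the same route as the paper: pass to $\wh{\mat K}$ via $\wh{\mat K}\preceq\mat K$, apply \Cref{thm1}/\Cref{cor:dopt} to $\wh{\mat K}$ with the sRRQR bound on $\|[\wh{\mat U}_k\t\mat S]^{-1}\|_2$, insert the eigenvalue floor from \Cref{thm2}, and finish with the HMT tail bound (with $t=e$, $u=\sqrt{2p}$) on $\|\mat\Lambda_\perp^{1/2}\mat\Omega_{n-k}\mat\Omega_k^\dagger\|_2$. Your write-up is in fact more careful than the paper's on two points the authors leave implicit: the invariance of $\wh{\mat K}$ under the orthonormalization in Line~1 of \Cref{alg:nys}, and the index reversal $\lambda_i(\mat\Lambda_k^{-1})=1/\lambda_{k+1-i}(\mat K)$ that produces the stated ordering of the diagonal of $\mat D$.
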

\begin{proof}
    See \Cref{ssec:nysproof}.
\end{proof}

The above theorem gives us bounds on the D-optimality of sensor placement using a randomized Nystr\"om approximation $\wh{\mat K}$ of the covariance matrix $\mat K$. Note that compared to \Cref{cor:dopt}, the lower bound is much more loose due to the presence of the additional term $\mat{D}$. This is due to the fact that the eigenvectors of $\mat{K}$ are now computed approximately. The use of sRRQR results in a higher computational cost $\mathcal{O}(nk^2 \log_f(n))$ flops, but with stronger theoretical guarantees than QRCP. In numerical experiments, however, we use QRCP, which has a computational cost of $\mathcal{O}(nk^2)$ flops. 

\subsubsection{Pivoted Cholesky} \label{ssec:pivcholesky}
An alternative approach to constructing a Nystr\"om approximation of $\mat{K}$ is using the pivoted Cholesky algorithm. This can then be combined with the GKS approach to determine the sensor placements.

To explain the pivoted Cholesky approximation, we define two sequences of matrices $\{\mat{K}^{(j)}\}$ and $\{\mat{E}^{(j)}\}$, which respectively contain the approximations and the residual matrices for $ 0 \le j \le k$. We initialize $\mat{K}^{(0)} = \mat{0}$ and $\mat{E}^{(0)} = \mat{K}$. At each step, we adaptively select an index $s_j \in \{1,\dots,n\}$ called a pivot, and construct the vector $\vec{w}_j = \mat{Ke}_{s_j}$ formed from the  column of $\mat{K}$ corresponding to the pivot $s_j$. We can use this column to update the approximation and the residual as 
\[\begin{aligned}
    \mat{K}^{(j)} = \> \mat{K}^{(j-1)} + \frac{\vec{w}_j \vec{w}_j^\top}{\vec{e}_{s_j}^\top\vec{w}_j}  \qquad \mat{E}^{(j)} = \> \mat{E}^{(j-1)} - \frac{\vec{w}_j \vec{w}_j^\top}{\vec{e}_{s_j}^\top\vec{w}_j} 
\end{aligned} \qquad 1 \le j \le k.\]
In practice, neither sequence of matrices $\{\mat{K}^{(j)}\}_{j=0}^k$ and $\{\mat{E}^{(j)}\}_{j=0}^k$ are formed explicitly, but they are stored in low-rank format.

There are several different strategies for selecting the pivots. We consider two such approaches in this paper.  In the \textbf{greedy pivoted Cholesky}~\cite{harbrecht2012low}, we pick an index from the largest entry in the residual matrix (which occurs on the diagonal, since $\mat{K}$ is SPSD), with ties broken in some fashion. That is, we take 
\[s_j \in \arg\min_{1 \le i \le n} \vec{e}_i^\top\mat{E}^{(j)}\vec{e}_i \qquad 0 \le j \le k. \]
This approach is also known as Cholesky with complete pivoting~\cite[Section 10.3]{higham2002accuracy}. The second approach we considered is \textbf{random pivoted Cholesky} (RPCholesky)~\cite{chen2025randomly}, which at step $j$ chooses the pivot $s_j$ based on the probability distribution 
\[ \mathbb{P}\{i = s_j\} = \frac{\vec{e}_i^\top\mat{E}^{(j)}\vec{e}_i}{\trace(\mat{E}^{(j)})} \qquad 1 \le i \le n. \]
Both methods have the advantage that they require forming only one column of the matrix $\mat{K}$ at a time. Thus, the run time of both algorithms is $\mathcal{O}(nk^2)$ flops.

\begin{algorithm}[!ht]
\caption{CholeskyGKS}\label{alg:cholgks}
\begin{algorithmic}[1]
\Require Covariance matrix $\mat K$ and number of sensors $k$
\Ensure Sensor placement indices $\vec p$ 
\vspace{0.2cm}
\State $\mat F = \texttt{pivotedCholesky}(\mat K, k)$ \Comment{$\mat F \in \reals^{n \times k}$ such that $\mat K \approx \mat F \mat F\t$}

\State $[\mat U, \sim, \sim] \gets \texttt{svd}(\mat F)$  \Comment{Thin SVD of data matrix} 
\State $\mat U_k \gets  \mat U(:,1:k)$ \Comment{Store first $k$ columns of $\mat U$}
\State $[\sim, \sim, \vec p] = \texttt{QR}(\mat U_k^\top, \texttt{`vector'})$ \Comment{Get permutation matrix in vector form}
\State $\vec p \gets \vec p(1:k)$ \Comment{Store first $k$ columns of $\vec p$}

\end{algorithmic}
\end{algorithm}

The output of both these approaches can be given in the form of an approximation, $\mat{K} \approx \wh{\mat{K}} = \mat{FF}^\top$ (See Step 2 in \Cref{ssec:choleskyeigs} for more details). We then apply the GKS framework in \Cref{alg:cssp} to $
\wh{\mat{K}}$. The step in Line 1 of \Cref{alg:cssp} can be simplified since the SVD of $\wh{\mat{K}}$ can be readily obtained from a thin SVD of $\mat{F}$. These steps are summarized in \Cref{alg:cholgks}. The first step involving the pivoted Cholesky can be computed using the greedy pivoting or RPCholesky. It is interesting to note that the pivots $\{s_j\}_{j=0}^k$ can also be considered as sensor placement locations, thus suggesting that the additional GKS step may not be necessary to determine sensor placements. However, in numerical experiments (\Cref{ssec:gksvsnogks}), we found that the additional GKS step to be beneficial in the sense that it yielded a higher D-optimality value.     

In what follows, we will provide an analysis of ChokeskyGKS (\Cref{alg:cholgks}), assuming that greedy selection was used to determine the pivots.  

\begin{theorem}\label{thm:greedypiv}
    Let $\mat{S}$ be a selection operator obtained from \Cref{alg:cholgks} with greedy pivoted Cholesky in Line 1, and sRRQR in Line 4. Then
    \[ \phi_D(\mat{S}_{\rm opt}) \ge \phi_D(\mat{S}) \geq \ld\left(\mat I_k + \frac{\eta^{-2}\mat \Lambda_k \mat C}{1 + f^2k(n-k)}\right),\] 
    where $\mat C \in \reals^{k \times k}$ is a diagonal matrix such that $c_{ii} = [4^{i-1}(n - i+1)]^{-1}$ for $1 \le i \le k$.
\end{theorem}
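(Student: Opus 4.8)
The plan is to run the same three-stage argument used for \Cref{thm3}, with the randomized Nystr\"om approximation replaced by the deterministic greedy pivoted Cholesky approximation, so that the probabilistic spectral estimates become a clean deterministic per-index bound on the spectrum of $\wh{\mat K}$. Stage~one relates $\phi_D$ evaluated on the true $\mat K$ to $\phi_D$ evaluated on $\wh{\mat K} = \mat F\mat F\t$; stage~two applies the GKS/sRRQR bound of \Cref{thm1} and \Cref{lem:srrqrbounds} to $\wh{\mat K}$; stage~three replaces the eigenvalues of $\wh{\mat K}$ by those of $\mat K$ using the matrix $\mat C$.

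For stage~one, greedy pivoted Cholesky produces $\wh{\mat K}$ with $\mat 0 \preceq \wh{\mat K} \preceq \mat K$, since the residual $\mat K - \wh{\mat K}$ is exactly the final Schur complement $\mat E^{(k)}$ of the SPSD matrix $\mat K$, hence SPSD. Therefore $\eta^{-2}\mat S\t\mat K\mat S + \mat I_k \succeq \eta^{-2}\mat S\t\wh{\mat K}\mat S + \mat I_k \succ \mat 0$ for the selection operator $\mat S$ returned in Line~4 of \Cref{alg:cholgks}, and monotonicity of $\ld$ on SPSD matrices gives $\phi_D(\mat S) \ge \ld(\eta^{-2}\mat S\t\wh{\mat K}\mat S + \mat I_k)$. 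For stage~two, since $\mat F$ has $k$ columns $\wh{\mat K}$ has rank at most $k$, so the top-$k$ left singular vectors $\wh{\mat U}_k$ of $\mat F$ (Lines~2--3) span its entire range and satisfy $\wh{\mat U}_k\t\wh{\mat K}\wh{\mat U}_k = \wh{\mat\Lambda}_k := \mathrm{diag}(\lambda_1(\wh{\mat K}),\dots,\lambda_k(\wh{\mat K}))$. Applying \Cref{thm1} with $\wh{\mat K}$ in the role of $\mat K$ and $\wh{\mat U}_k$ in the role of $\mat V_k$, together with the sRRQR bound $\|[\wh{\mat U}_k\t\mat S]^{-1}\|_2 \le \sqrt{1+f^2k(n-k)}$ from \Cref{lem:srrqrbounds} (exactly as in the proof of \Cref{cor:dopt}), yields $\ld(\eta^{-2}\mat S\t\wh{\mat K}\mat S + \mat I_k) \ge \ld\bigl(\mat I_k + \eta^{-2}\wh{\mat\Lambda}_k/(1+f^2k(n-k))\bigr)$; here one uses the genericity hypothesis $\mathrm{rank}(\wh{\mat U}_k\t\mat S) = k$, which sRRQR guarantees when $\wh{\mat K}$ has rank exactly $k$.

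The only genuinely new ingredient is stage~three, which I would isolate as a lemma in \Cref{ssec:choleskyeigs}: for the greedy pivoted Cholesky approximation, $\lambda_i(\wh{\mat K}) \ge \lambda_i(\mat K)/(4^{i-1}(n-i+1)) = c_{ii}\lambda_i(\mat K)$ for $1\le i\le k$. Letting $I = \{s_1,\dots,s_k\}$ be the chosen pivots, $\wh{\mat K}$ coincides with the Nystr\"om approximation~\eqref{eqn:nystrom} using those columns, so it reproduces the selected block, $(\wh{\mat K})_{I,I} = \mat K_{I,I}$, and Cauchy interlacing gives $\lambda_i(\wh{\mat K}) \ge \lambda_i(\mat K_{I,I})$. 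Greedy pivoted Cholesky on $\mat K$ is in turn equivalent to QR with column pivoting on any Gram factor $\mat B$ with $\mat B\t\mat B = \mat K$ (take $\mat B = \mat\Lambda^{1/2}\mat V\t$ from \Cref{decomp}, so $\sigma_i(\mat B) = \lambda_i(\mat K)^{1/2}$), because at each step the largest diagonal of the Schur complement of $\mat K$ equals the largest trailing column norm of $\mat B$; writing the resulting $k\times k$ triangular factor as $\mat R_{11}$ one has $\mat K_{I,I} = \mat R_{11}\t\mat R_{11}$, so $\lambda_i(\mat K_{I,I}) = \sigma_i(\mat R_{11})^2$. Thus it suffices to prove the per-index bound $\sigma_i(\mat R_{11}) \ge \sigma_i(\mat B)/(2^{i-1}\sqrt{n-i+1})$ for column-pivoted QR --- essentially the Businger--Golub worst-case analysis recalled in \Cref{ssec:qr}, but tracked index by index rather than only for $\sigma_{\min}(\mat R_{11})$. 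The factor $\sqrt{n-i+1}$ comes from bounding the $i$th pivot column norm below by the trailing block's Frobenius norm divided by $\sqrt{n-i+1}$, which is at least $\sigma_i(\mat B)/\sqrt{n-i+1}$ by singular-value interlacing; the factor $2^{i-1}$ comes from controlling $\sigma_{\min}$ of the leading triangular block in terms of its diagonal pivots, using the greedy-pivoting inequality $r_{jj}^2 \ge \sum_{l\ge j} r_{jl}^2$.

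Putting the three stages together, $\wh{\mat\Lambda}_k \succeq \mat\Lambda_k\mat C$ entrywise, so monotonicity of $\ld$ upgrades the stage-two bound to $\phi_D(\mat S) \ge \ld\bigl(\mat I_k + \eta^{-2}\mat\Lambda_k\mat C/(1+f^2k(n-k))\bigr)$, and $\phi_D(\mat S_{\rm opt}) \ge \phi_D(\mat S)$ holds by definition of $\mat S_{\rm opt}$. The main obstacle is the per-index spectral lemma of stage~three: obtaining the sharp $4^{i-1}(n-i+1)$ constant (rather than the cruder index-independent $(n-k)4^k$ recalled in \Cref{ssec:qr}, which would weaken the bound badly for small $i$) requires making the QRCP/complete-pivoted-Cholesky correspondence precise at the level of pivots and triangular factors, and carefully passing from the easily-bounded diagonal pivots $r_{jj}$ to the singular values $\sigma_i(\mat R_{11})$ --- the step responsible for the exponential $2^{i-1}$ growth. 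Everything else is Loewner monotonicity of $\ld$ together with reuse of \Cref{thm1} and \Cref{lem:srrqrbounds}.
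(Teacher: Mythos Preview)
Your proposal is correct and follows essentially the same three-stage architecture as the paper's proof: Loewner comparison $\wh{\mat K}\preceq\mat K$, the GKS/sRRQR bound of \Cref{thm1} applied to $\wh{\mat K}$, and then the per-index spectral estimate $\lambda_i(\wh{\mat K})\ge c_{ii}\lambda_i(\mat K)$ via $\lambda_i(\wh{\mat K})\ge\lambda_i(\mat K_{I,I})$ and a complete-pivoting analysis. The only cosmetic difference is in stage three: the paper works directly with the upper-triangular Cholesky factor $\mat R$ of $\mat\Pi\t\mat K\mat\Pi$ (citing Higham for $\|\mat R_{1:i}^{-1}\|_2\le 2^{i-1}/r_{ii}$ and bounding $r_{ii}$ from below via the trailing block $\mat R_{i:n}$), whereas you recast greedy pivoted Cholesky as QRCP on a Gram factor $\mat B$ with $\mat B\t\mat B=\mat K$ and invoke the Businger--Golub per-index bound. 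Since $\mat B\mat\Pi=\mat Q\mat R$ forces $\mat\Pi\t\mat K\mat\Pi=\mat R\t\mat R$ with identical pivots, the two routes are the same computation in different language.
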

\begin{proof}
    See \Cref{ssec:choleskyeigs}.
\end{proof}
The interpretation of this result is similar to that of \eqref{eqn:rrqr_bounds}; however, the lower bound is much more pessimistic due to the additional scaling in the matrix $\mat{C}$. We were not able to derive an analogous result for RPCholesky, since existing results only apply to low-rank approximations and not to individual eigenvalues, as is required in the analysis of the GKS bounds. 

\subsection{Greedy Approaches}\label{ssec:greedy}
We discuss a greedy method for solving the maximization problem~\eqref{doptsub} and use this algorithm has a benchmark for our proposed algorithms in both theory and numerical experiments.
The method greedily selects a sensor that maximizes the D-optimality objective function at each iteration $j$ and terminates when the desired number of sensors $k$ has been reached. In the greedy algorithm, at least $\sum_{j=0}^{k-1} (n-j)$ computations of the D-optimality criterion must be made. The computation of the log determinant for each new sensor requires $\mathcal{O}(j^3)$ flops for $1 \le j \le k$. So, the total computational cost of this greedy algorithm is on the order of $\sum_{j=0}^{k-1}\mathcal{O} (j^3)(n-j) = \mathcal{O}(nk^4)$ flops. A more efficient implementation of this algorithm, which sequentially computes the determinant, is discussed next.

Chen et. al (\cite[Algorithm 1]{chen2018fast}) presents an efficient implementation that builds a submatrix of an SPD matrix that maximizes the log determinant. In that paper, the authors propose an algorithm to accelerate the greedy MAP inference for Determinantal Point Processes. While the context and application of the algorithm is different from the problem at hand, the action of the algorithm is the same. To the best of our knowledge, this algorithm as not been used for sensor placement in GPs. 
   For our problem, the input to \cite[Algorithm 1]{chen2018fast} is the SPD matrix $\mat A:= \mat I_n + \eta^{-2}\mat K$. The computational complexity is reduced from $\mathcal{O}(nk^4)$ flops to $\mathcal{O}(nk^2)$ flops by updating the Cholesky factor of the covariance matrix of the selected sensors in each iteration. We will refer to \cite[Algorithm 1]{chen2018fast} as the efficient greedy algorithm for the remainder of this paper.

We now discuss theoretical guarantees of using the greedy algorithm. The objective function $\phi_D(\mat S) $ is submodular  and monotonic, if viewed as a set function on the selected sensors and it can be shown that~\cite{nemhauser1978analysis,krause2014submodular,alexanderian2025brief} 
    \begin{align} \label{greedybounds}
        \phi_D(\mat S_{opt}) \geq (1-1/e) \max_{\mat S \in \mathcal{P}_k} \phi_D(\mat S).
    \end{align}

This result allow us to compare the theoretical guarantees of the greedy algorithm to our proposed algorithms. In the following section, we present numerical results of our proposed algorithms compared against the efficient greedy algorithm as a benchmark.

\subsection{Summary and comparison of computational costs}

\Cref{tab:compcost} summarizes the proposed methods by listing the algorithm reference in the paper and the corresponding computational cost. In the cost estimates, we assume that the pivoted QR has been done using QRCP. Except for the Conceptual GKS and NysGKS-na\"ive algorithms, the proposed algorithms have linear complexity in the number of candidate locations.  We note that the computational complexity of this efficient greedy algorithm is on the same order of magnitude as some of our proposed methods (see \Cref{tab:compcost}). However, our proposed algorithms are competitive both in runtime and numerical performance exemplified in \Cref{sec:results}. 
\begin{table}[!ht]
    \centering
    \begin{tabular}{c|c|c}
         Algorithm & Reference & Comp. Cost (flops)\\
         \hline
         Conceptual GKS & \Cref{alg:cssp}&  $\mathcal{O}(n^3+nk^2)$\\
         NysGKS-na\"ive & \Cref{alg:nys}&  $\mathcal{O}(n^2k+nk^2)$\\ \hline
         NysGKS + $\mathcal{H}^2$-matrices & \Cref{alg:nys} and \cite{h2cite1,h2cite2} &  $\mathcal{O}(nk^2)$\\
         RPCholesky + GKS & \Cref{alg:cholgks} and \cite[Algorithm 1]{chen2025randomly} &  $\mathcal{O}(nk^2)$\\
         Pivoted Cholesky + GKS & \Cref{alg:cholgks} &  $\mathcal{O}(nk^2)$\\ \hline 
         Efficient Greedy  & \cite[Algorithm 1]{chen2018fast} &  $\mathcal{O}(nk^2)$\\
         
    \end{tabular}
    \caption{Comparison of all proposed algorithms and their respective computational complexity in terms of floating point operations. Here, $n$ is the number of candidate sensors and $k$ is the number of available sensors.
    \label{tab:compcost}}
\end{table}

\section{Numerical Experiments} \label{sec:results}
In this section, we consider three applications: thin liquid films and sea surface temperature. An example on surrogate modeling can be found in \Cref{ssec:4d}. In the following experiments, the covariance function chosen for the GP is the squared exponential kernel defined by
\begin{align} \label{gkernel}
     \kappa(\vec x, \vec y) := \sigma_f^2\exp\left(-\frac{\|\vec x - \vec y\|_2^2}{2\ell^2}\right)
\end{align}
where $\sigma_f$ represents the signal variance hyperparameter, and $\ell$ represents the lengthscale hyperparameter.

Furthermore, we evaluate the performance of our algorithms using the D-optimality criterion or the objective function in \eqref{doptsub}, and the relative reconstruction error in Euclidean norm, computed using the formula
\begin{align}\label{relerr}
    \text{error} = \frac{\|\vec m(\vec f_p)-\vec f\|_2}{\|\vec f\|_2},
\end{align}
where $\vec m(\vec f_p)$ is defined in \eqref{eq:meanrecon} and $\vec{f}$ is the vector containing the true function values.

\paragraph{Computational environment} All computations  are carried out in \texttt{MATLAB2024a} on MacOS Sequoia v15.5 with an Apple M1 chip and 32GB of Memory. The exception is a numerical experiment (see \Cref{costvsn} which compares the computation time of NysGKS + $\mathcal{H}^2$-matrices (\Cref{alg:nys} and \cite{h2cite1,h2cite2}) as the number of candidate sensors increases, which is computed on an HPC cluster.   

\paragraph{Hyperparameter tuning}
To find the length scale $\ell$ and signal variance $\sigma_f$ hyperparameters for the covariance functions in \eqref{gkernel}, we perform a sweep of potential values. Given a range of values for the lengthscale $\ell$ and signal variance $\sigma_f$, we form a 2D grid. For each element in the grid, we compute the log marginal likelihood 
\begin{align}\label{lml}
    \log p(\vec y|\mat X, \sigma_f, \ell) = -\frac{1}{2}\vec y\t (\mat K + \eta^2 \mat I_n)^{-1}\vec y - \frac{1}{2} \log \det  (\mat K + \eta^2 \mat I_n) - \frac{n}{2} \log 2\pi
\end{align}
using the full covariance matrix (or in the case of the second and third numerical experiment, an approximation of the covariance matrix) and the provided observation data $\vec y$ which is specified in each numerical experiment. The hyperparameter pair which achieves the maximum log likelihood is then used to form the kernel function for numerical experiments. In the second and third applications, the number of candidate sensors $n$ is so large such that the covariance matrix can not be explicitly formed. Thus, we use a rank-$k$ Nystr\"om approximation $\wh{\mat K}$ on the covariance matrix $K$ using the RPCholesky algorithm \cite{chen2025randomly} before computing the log marginal likelihood for each candidate hyperparameter pair. The computation of the log-likelihood is further accelerated by using the Woodbury identity and the matrix determinant lemma \cite[Appendix A.3]{rasmussenGP}.

\subsection{Thin Liquid Film Dynamics - A 1D Case Study}
The first application is a one-dimensional example of thin liquid film dynamics. The data come from a numerical solution of a dimensionless fourth-order nonlinear PDE model, presented in \cite[(3.14)]{dropletdata}, which describes the dynamics of the free surface height $h(x,t)$ of a thin liquid film flowing down a vertical cylindrical fiber. This model incorporates gravity, the streamwise and azimuthal curvature of the interface, and the substrate geometry. For this example, we set the system parameters to $\alpha = 2.299$, $\eta = 0.231$, and $A = 0$. The domain $x \in [0,  10]$ is discretized with $n = 6001$ equally spaced grid points, which we also use as candidate sensor locations. The PDE is numerically solved using a fully-implicit second-order finite difference method. 
The dynamics of the liquid film are captured with $31$ time snapshots at times $t=0,10,\dots,300$. Starting from a nearly flat initial profile with small perturbations, the numerical solution exhibits early-stage transient behavior, followed by the formation of an array of three droplets that begin to travel down the fiber at approximately $t=140$. A typical solution profile from the PDE simulation at $t = 140$ is shown in \Cref{fig:recon1d}.  We used this snapshot in the sweeping procedure for the hyperparameters.  The noise variance $\eta^2$ was chosen based on $0.2\%$ noise. The following table lists the parameter values of $\sigma_f$, $\ell$, and $\eta^2$ used in all the experiments

\begin{center}

    \begin{tabular}{c|c|c}
 
        $\sigma_f$ & $\ell$ & $\eta$ \\ \hline
        1  & $0.5$ &  $4.2784\e{-4}$\\
 
    \end{tabular}

\end{center}
Additional numerical experiments can be found in \Cref{sec:results}. These experiments include the statistics of the relative reconstruction error \eqref{relerr} over time and the effect of the number of selected sensors $k$ on the D-optimality score and the relative reconstruction error \eqref{relerr}.

\subsubsection{Performance of Conceptual GKS}
First, we demonstrate the Conceptual GKS \Cref{alg:cssp} on the liquid film example using the time snapshot $t = 140$ (see \Cref{fig:recon1d}), with $k=30$ sensors. 
The  mean reconstruction of the GP is shown as a red dashed curve in the left panel of \Cref{fig:recon1d}. It closely matches the true liquid film profile (solid blue curve) across the entire domain. The gray-shaded region represents the point-wise one-standard deviation interval in the domain.  Also displayed are the selected sensors in red dots. The D-optimality of this sensor selection is $221.39$ and the relative reconstruction error is $0.0025$.

\begin{figure}[!ht]
    \centering
    \includegraphics[width=0.45\linewidth]{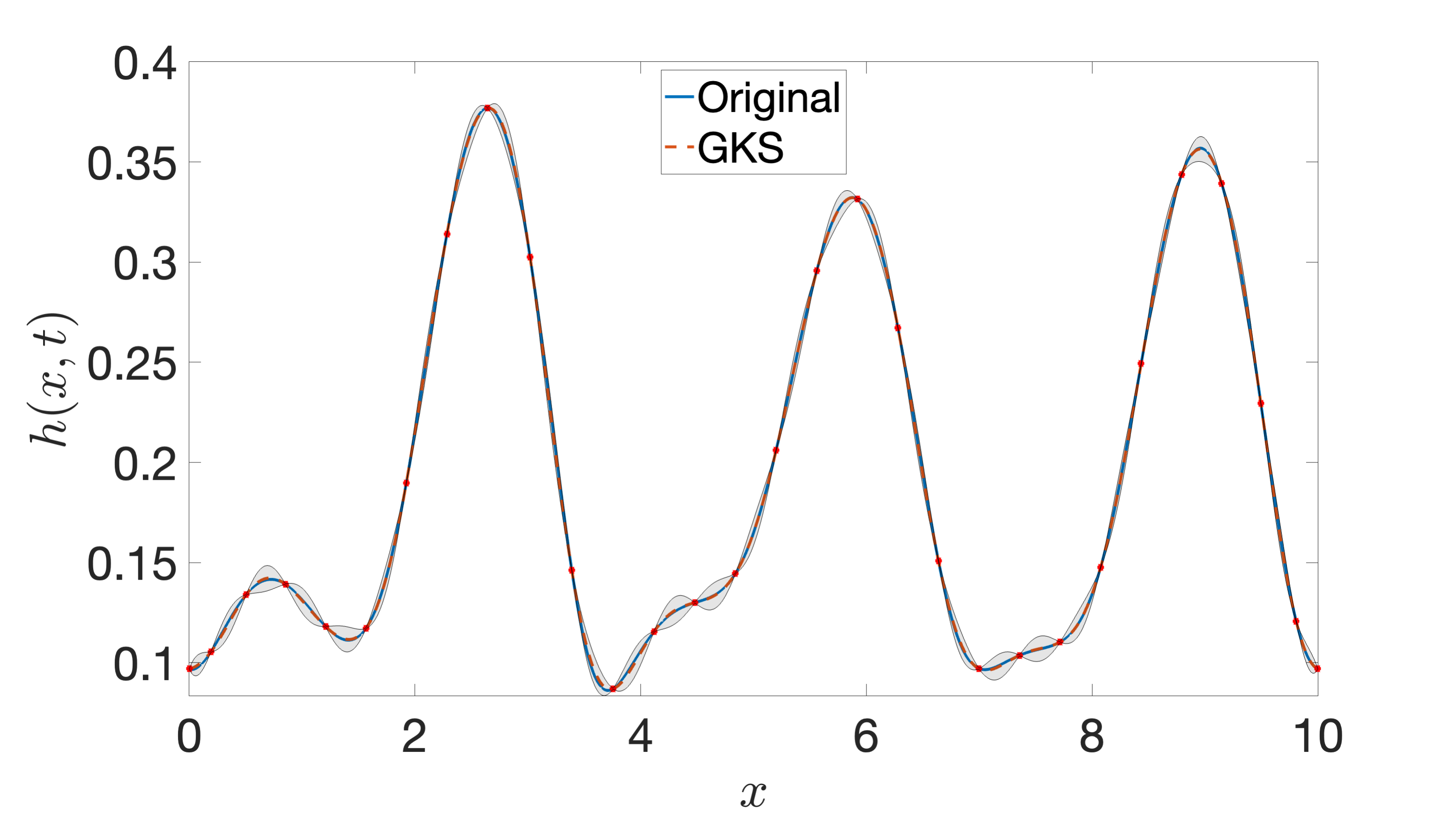}
    \includegraphics[width=0.45\linewidth]{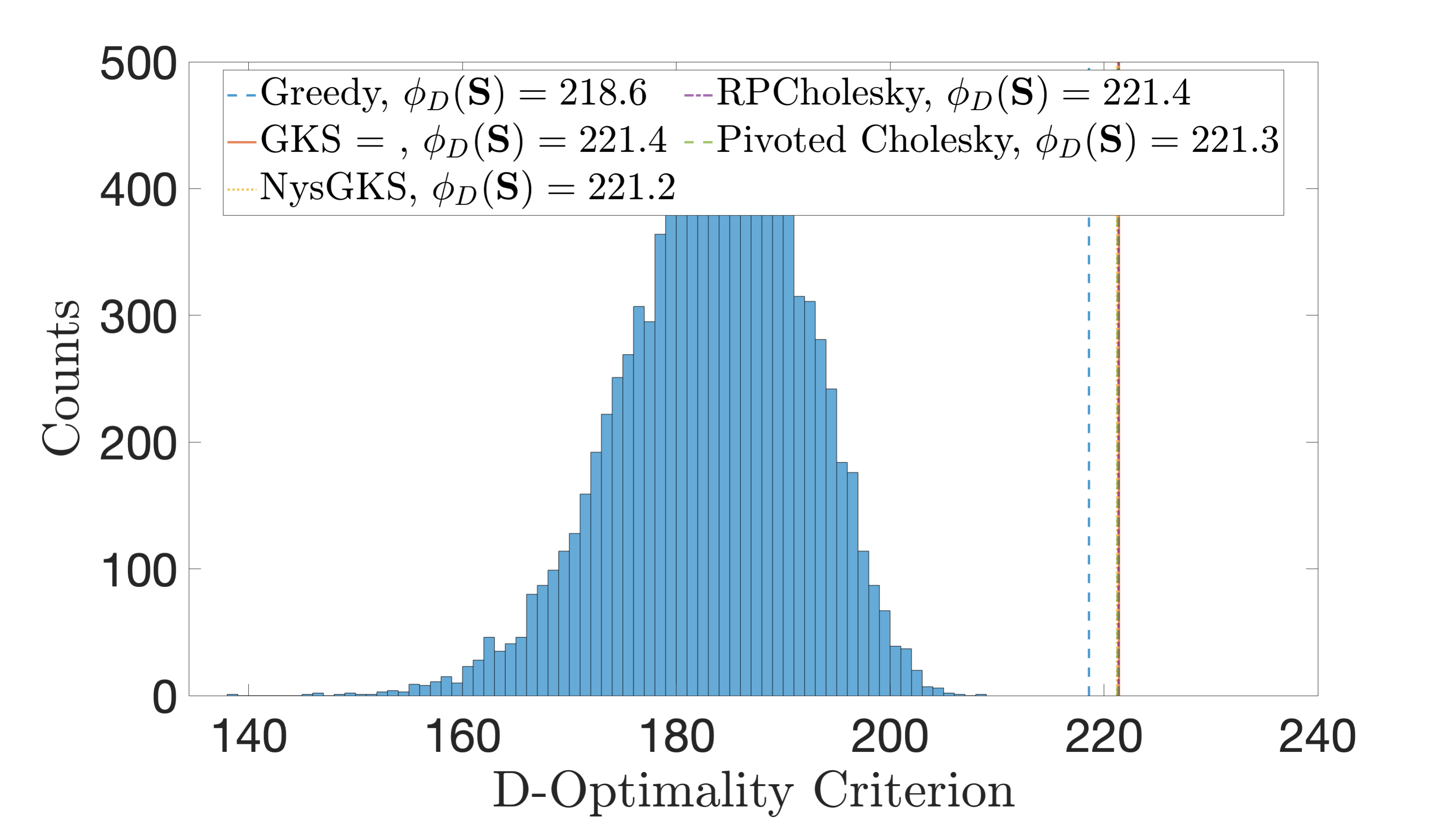}
     
    \caption{(Left) GP regression in the dashed red line and sensor selection in red asterisks using the conceptual GKS (\Cref{alg:cssp}). Reconstruction of the function is at $t=140$. The sensor selection achieves a D-optimality score of 221.39. The relative error \eqref{relerr} in the reconstructed droplet profile is 0.0025. (Right) Conceptual GKS (\Cref{alg:cssp}), efficient greedy \cite[Algorithm 1]{chen2018fast}, rank $k+10$ NysGKS (\Cref{alg:nys}), and rank $k$ RPCholesky and greedy pivoted Cholesky (\Cref{alg:cholgks}) compared with 10,000 realizations of random sensor placement in terms of D-optimality.}
    \label{fig:recon1d}
\end{figure}

Next,  we compare the performance of the conceptual GKS (\Cref{alg:cssp}), Greedy \cite[Algorithm 1]{chen2018fast}, rank $k$ NysGKS Algorithm (\Cref{alg:nys}) with oversampling parameter $p=10$, and rank $k$ RPCholesky and greedy pivoted Cholesky (\Cref{alg:cholgks}) in terms of the D-optimality score from each respective sensor placement. These scores are compared against the D-optimality scores achieved by 10,000 realizations of random sensor placements. For each method, we fix the number of available sensors as $k = 30$. 

The histogram of D-optimality scores from the randomly placed sensors is shown in \Cref{fig:recon1d} (right). We observe from the solid red line that the conceptual GKS algorithm achieves the highest score in terms of D-optimality of 221.39. This is followed by the rank $k$ RPCholesky at 221.36, rank $k$ greedy pivoted Cholesky at 221.28, rank $k+10$ NysGKS at 221.24, and lastly, the greedy algorithm at 218.58. Overall, we see that each algorithm outperforms the random sensor selection in terms of D-optimality.

\subsubsection{Error over time}
Next, we analyze the relative reconstruction errors throughout the time interval $t = 0, \dots, 300$. 

\begin{figure}[!ht]
    \centering
    \includegraphics[width=0.9\linewidth]{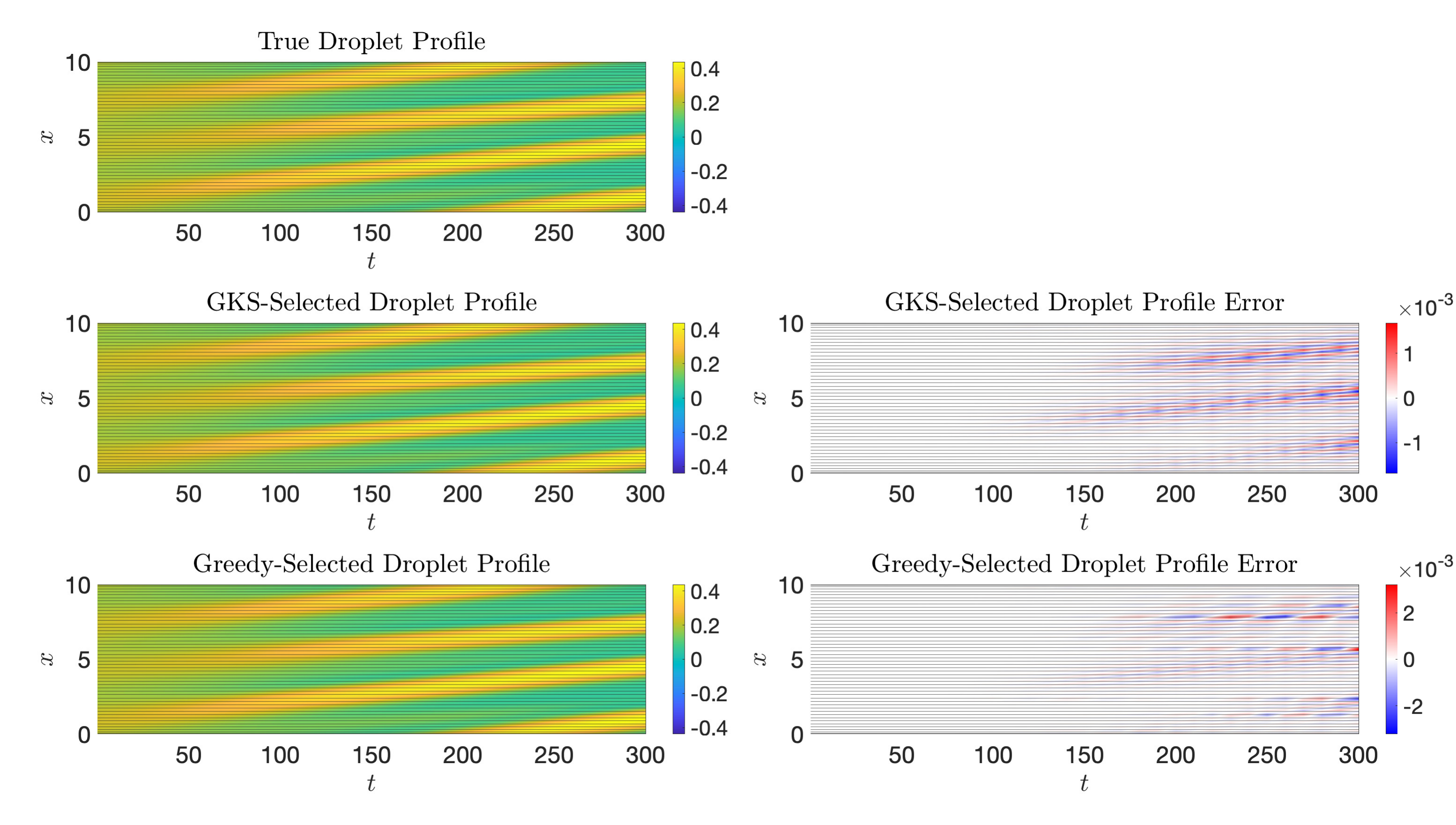}
    \caption{Comparisons of the height of the droplet over the entire domain in time and space for the true profile, GKS-selected sensor reconstruction (first column) and its error (second column), and Greedy-selected sensor reconstruction and its error. The black horizontal lines represent the sensor selections for each respective algorithm.}
    \label{fig:contourall}
\end{figure}

To further compare the conceptual GKS algorithm (\Cref{alg:cssp}) and the Greedy algorithm \cite[Algorithm 1]{chen2018fast}, we plot each reconstruction over time in \Cref{fig:contourall} along with their respective sensor placements. Since this numerical experiment is 1D, we plot the height of the droplet on a color scale along the spatial domain (represented on the vertical axis) and assign the horizontal axis to the temporal domain. We also provide the corresponding error plots on the side. The black horizontal lines represent the placed sensors to collect observations. As the time $t$ increases, the dynamics of the droplet evolve and become more difficult for the chosen kernel and hyperparameters to capture.

\subsection{Sea Surface Temperature - A 2D Case Study}
The second experiment is a two-dimensional example of reconstructing sea surface temperature from sensor observations. We use the NOAA Optimum Interpolation (OI) SST V2 data provided by the NOAA PSL, Boulder, Colorado, USA, from their website at \url{https://psl.noaa.gov}. We use sea surface temperature data collected weekly by the NOAA from the beginning of 1990 until the week of January 29, 2023 \cite{sst}. 
The two-dimensional domain covers all longitudinal and latitudinal degrees, $[0.5,359.5] \times [-89.5, 89.5]$ in increments of 1 degree, yielding a total of $64,000$ grid points. This includes land areas which are masked in the data. The number of points in the domain associated with the sea is $44,219$, which serve as our candidate sensor locations.

\begin{figure}[!ht]
    \centering
    \includegraphics[width=0.75\linewidth]{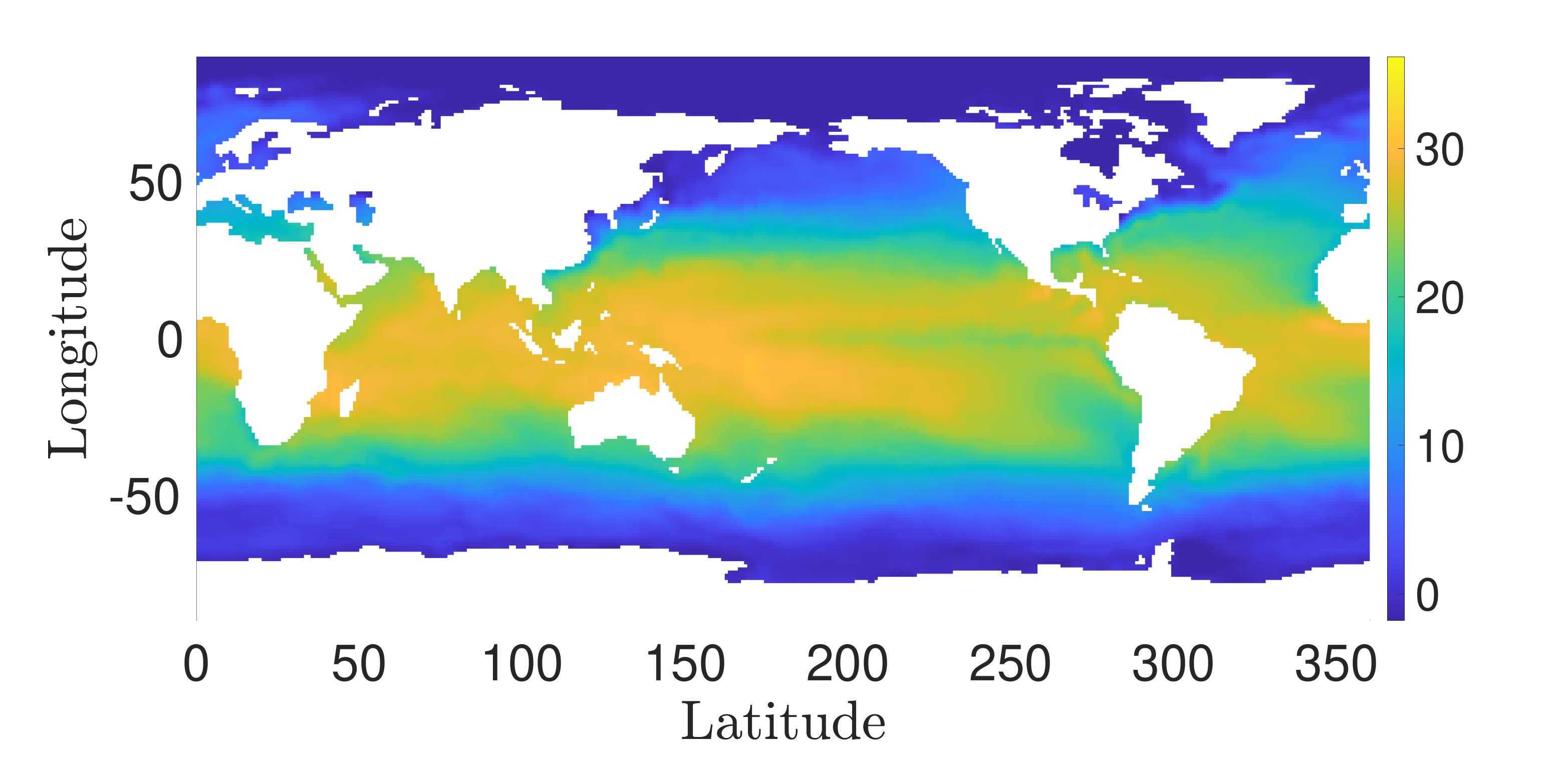}
    \caption{True Sea Surface Temperature for the week of February 4, 2018.}
    \label{fig:t5true}
\end{figure}

For this problem, the number of candidate sensors is too large for the conceptual Algorithm to be feasible. Thus, we report results using the NysGKS (\Cref{alg:nys}) and compare against the results from the efficient greedy algorithm (\cite[Algorithm 1]{chen2018fast}). To further speed up computation time, we use the implementation in H2Pack \cite{h2cite1, h2cite2} which allows for matrix-vector multiplication of dense kernel matrices in linear time. 
The true sea surface temperature data at this time snapshot is shown in \Cref{fig:t5true}. 
The following hyperparameters were used in the following experiments: 
\begin{center}
    
    \begin{tabular}{c|c|c}

        $\sigma_f$  & $\ell$& $\eta$ \\ \hline
        $0.11$& $16$&  $0.033$
    \end{tabular}

\end{center}
The parameters $\sigma_f,\ell$ were obtained by the parameter sweep described at the start of the section on sea surface temperature data in the week of February 4, 2018, whereas the noise variance $\eta^2$ is based on $0.2\%$ additive Gaussian noise. Additional numerical experiments can be found in \Cref{ssec:sst}. This encompasses the effect on the the relative reconstruction error \eqref{relerr} and D-optimality score as $k$ increases, the computation time as the number of candidate sensors increases, and a further comparison to he POD-DEIM algorithm \cite{poddeim1,poddeim2,qdeim}. Our proposed algorithms are compared to the POD-DEIM algorithm through the relative reconstruction error \eqref{relerr} over time by showcasing both normalized and un-normalized results. Lastly in \Cref{ssec:sst}, we explore the selection results when the kernel function \eqref{gkernel} is modified to use the great circle distance rather than Euclidean distance to account for the shape of the Earth.

\subsubsection{Performance of Methods}
The near-optimal sensor placements from \Cref{alg:nys} for $k=250$ is displayed using red asterisks in \Cref{fig:2drecon} (left). In the same figure, we plot the GP mean prediction on the week of February 4, 2018. In \Cref{fig:2drecon} (right) we plot the corresponding one unit of standard deviation. As expected, the uncertainty near the placed sensors is lower than the uncertainty where no sensors are placed nearby, such as the Mediterranean Sea. The relative error in the reconstruction \eqref{relerr} is $ 0.0781$ and the D-optimality is $1074.8$.
\begin{figure}[!ht]
     \centering
     \begin{subfigure}[b]{0.49\textwidth}
     
         \centering
         \includegraphics[width=\textwidth]{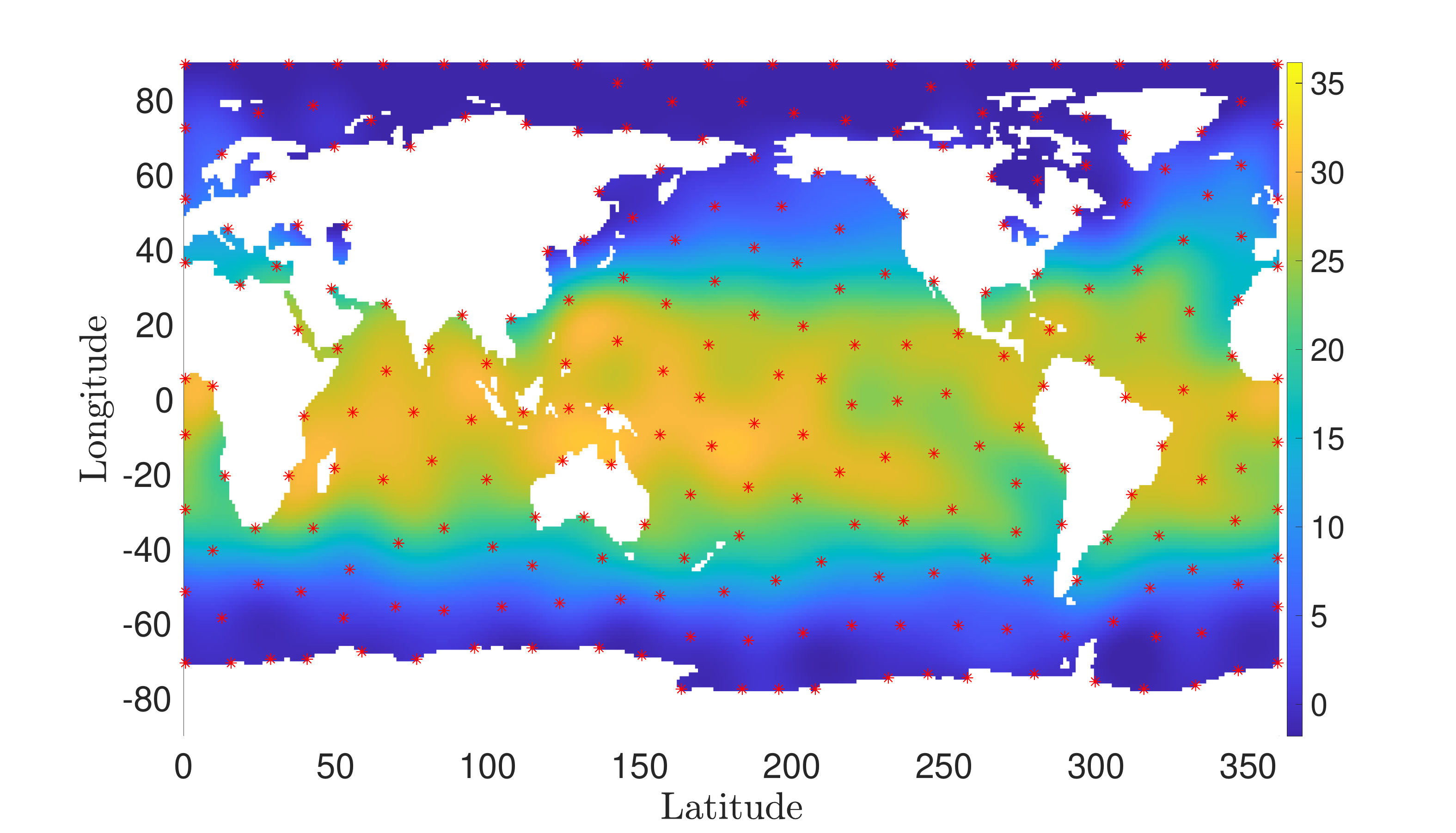}
         
     \end{subfigure}
     \hfill
     \begin{subfigure}[b]{0.49\textwidth}
         \centering
         \includegraphics[width=\textwidth]{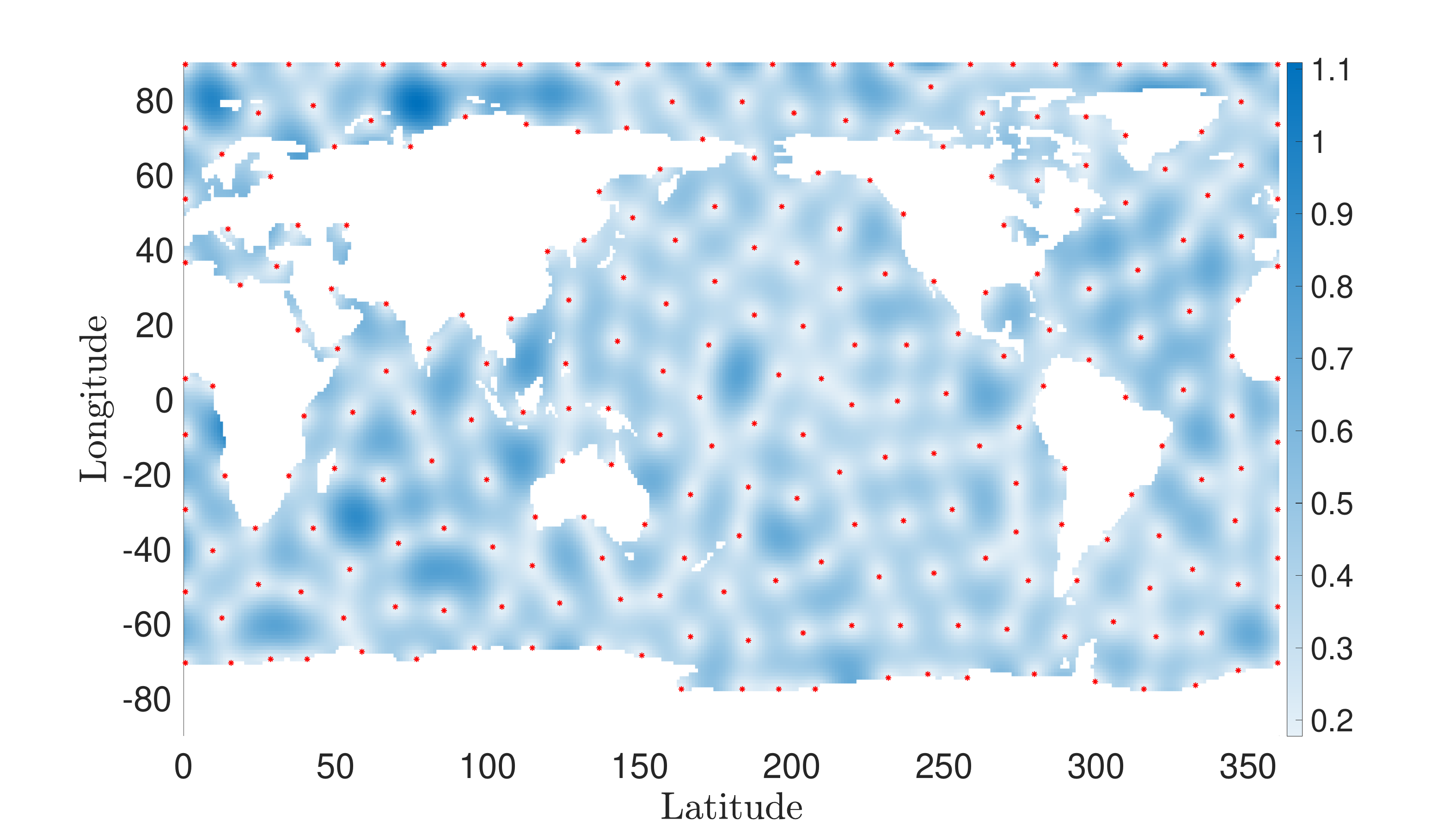}
     \end{subfigure}
        \caption{GP mean prediction (left) and variance (right) plot using the NysGKS (\Cref{alg:nys}). Selected sensor locations are indicated by red asterisks.}
        \label{fig:2drecon}
\end{figure}

The D-optimality of all the Nystr\"om approximation based placement methods are compared to $10,0000$ realizations of randomly placed sensors and the efficient greedy \cite[Algorithm 1]{chen2018fast} selection. The  NysGKS, RPCholesky, and greedy pivoted Cholesky selection algorithms have a much higher D-optimality score than 10,000 realizations of random selection. The greedy pivoted Cholesky selection achieves the highest D-optimality score at 1076.6. This is followed by the rank $k+10$ NysGKS Algorithm (\Cref{alg:nys}), greedy pivoted Cholesky Algorithm, and rank $k$ RPCholesky (\Cref{alg:cholgks}) with D-optimality scores of 1074.8, 1068.4, 1067.5, respectively. 
\begin{figure}[!ht]
    \centering
    \includegraphics[width=0.75\linewidth]{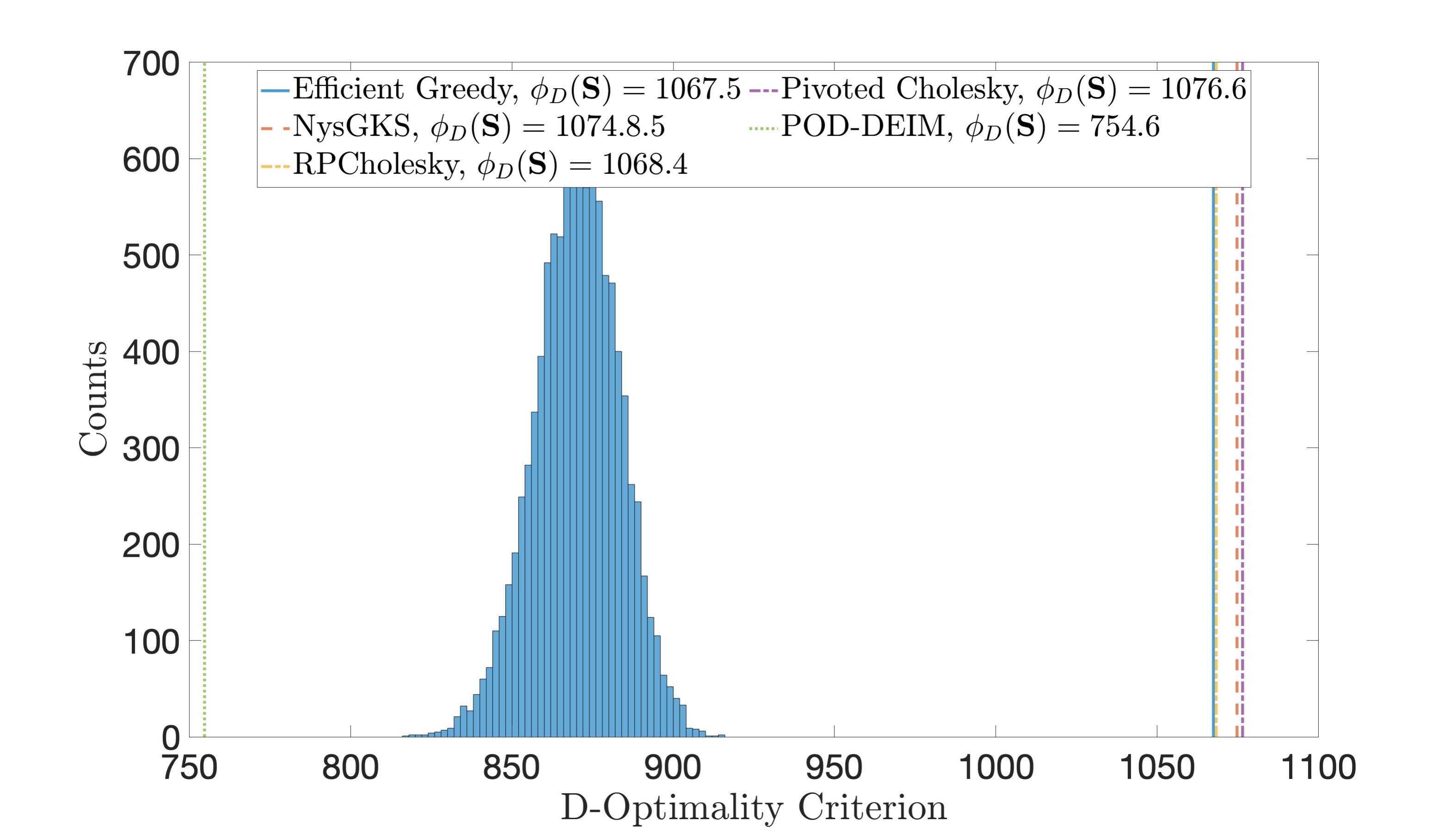}
    \caption{Efficient Greedy Selection \cite[Algorithm 1]{chen2018fast},  NysGKS (\Cref{alg:nys}),  RPCholesky, and greedy pivoted Cholesky (\Cref{alg:cholgks}) compared with 10,000 realizations of random sensor placement in terms of D-optimality.}
    \label{fig:rand_2d}
\end{figure}

\subsubsection{Comparison with POD-DEIM}

The POD-DEIM algorithm  \cite{poddeim1,poddeim2,qdeim} has been used to greedily place sparse sensors for full-state reconstruction and has connections to the D-optimality criterion. 
We train the POD-DEIM selection on sea surface temperature data using data from five years prior to the week of February 4, 2018. We then use $m = 250$ modes to place $k = 250$ sensors and perform a reconstruction for the week of February 4, 2018, shown in \Cref{fig:deim_mr250}. This method yields a relative error  of 0.0680 which is comparable to the relative error of 0.0781 from the NysGKS selected sensors. While the reconstruction error in the POD-DEIM method is slightly lower, the amount of required data in the training step is much larger and more demanding than the zero data requirement in the NysGKS algorithm. Additionally, because the structure of POD-DEIM reconstruction is deterministic, there is a lack of a notion of uncertainty in reconstruction. We see from \Cref{fig:deim_mr250} (left) that the D-optimality of the POD selected sensors is 754.6, which is lower than the D-optimality of the NysGKS selection (1074.8) which indicates that the POD selected sensors has a lower expected information gain.
\begin{figure}[!ht]
    \centering
    \includegraphics[width=0.45\linewidth]{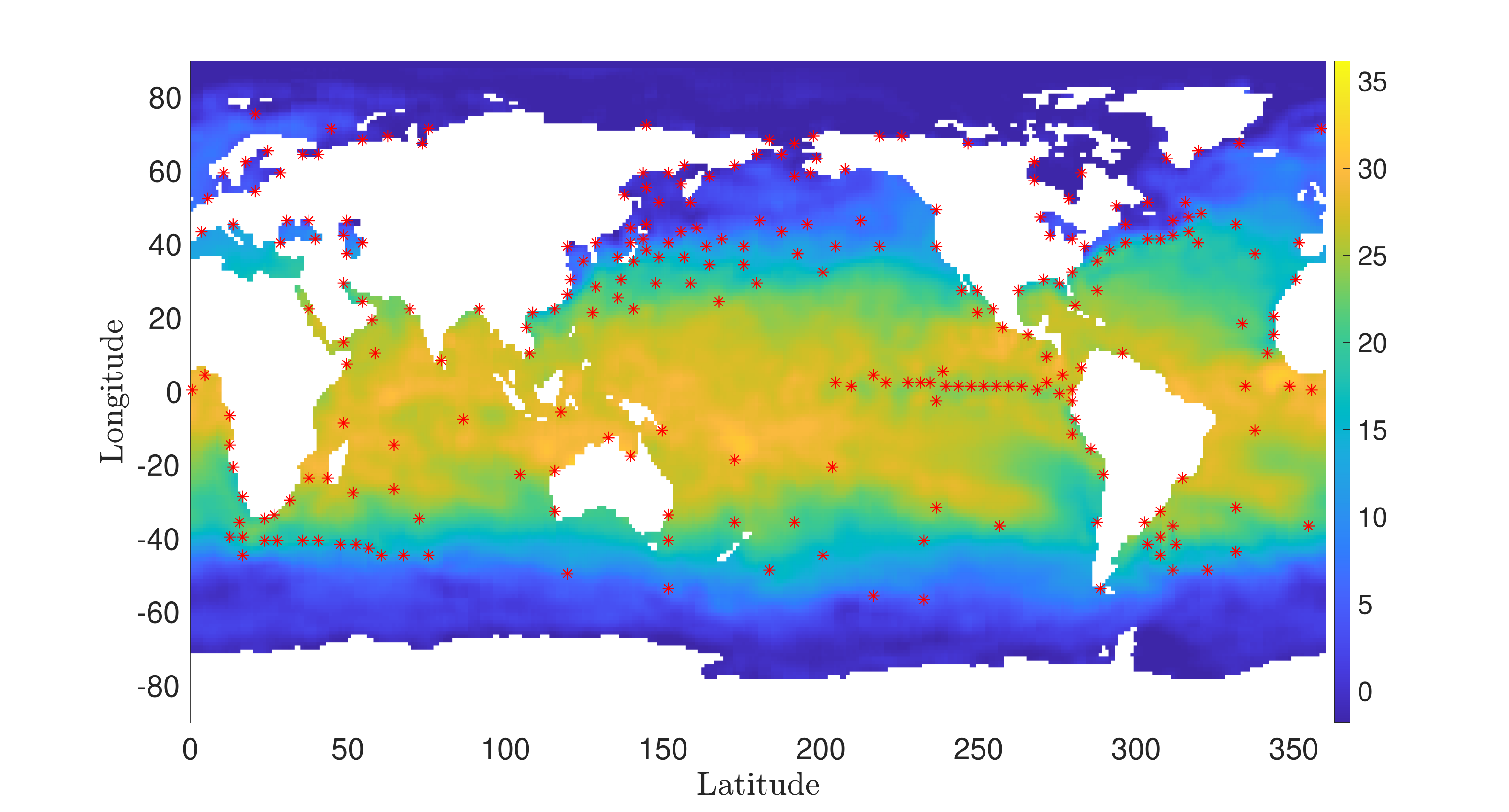}
    \includegraphics[width=0.45\linewidth]{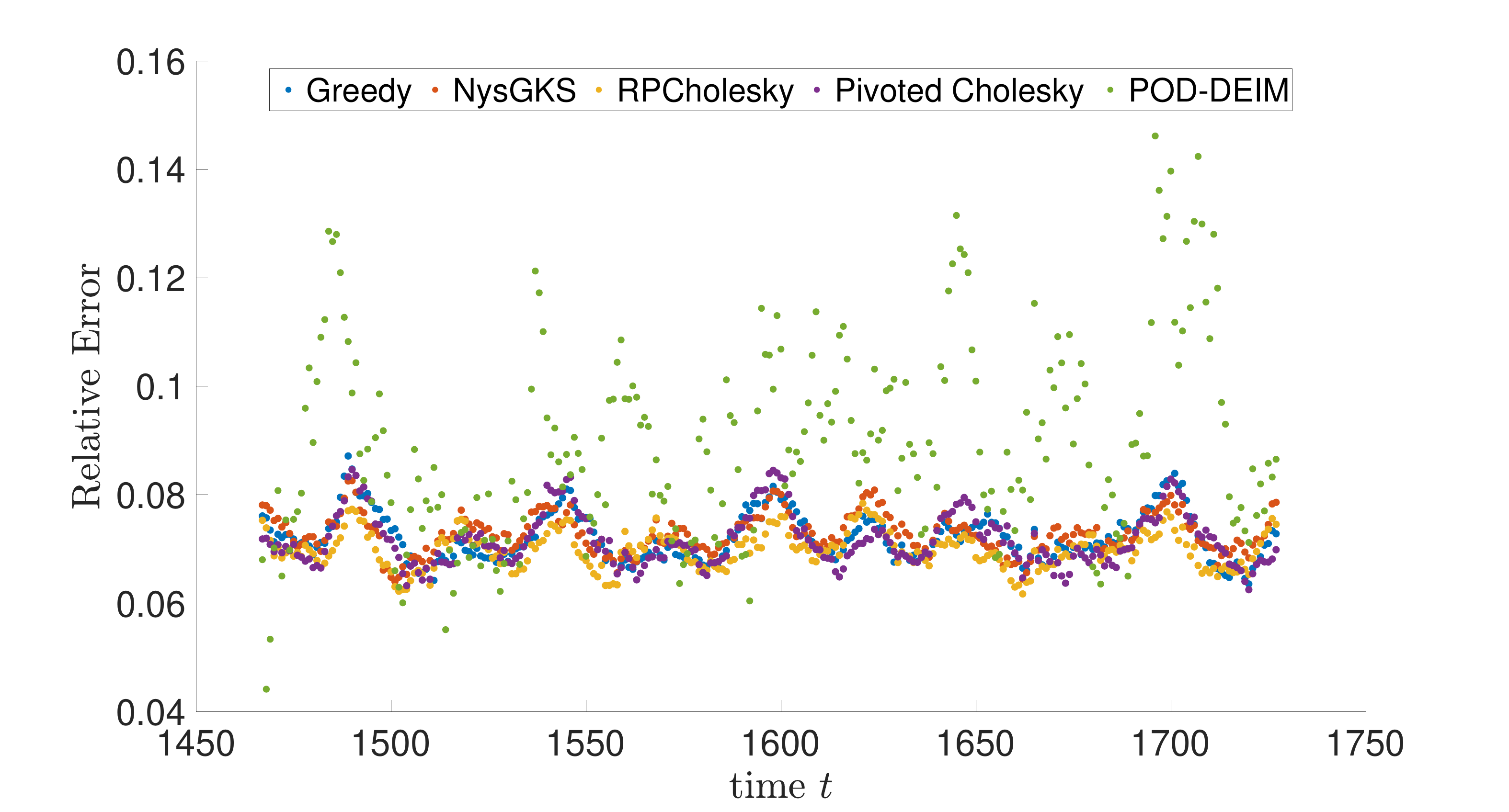}
    \caption{(left) POD Reconstruction with $m=250$ modes and $k = 250$ sensors. (right) Relative Error in Reconstruction as $t$ increases.}
    \label{fig:deim_mr250}
\end{figure}

Next, we consider the relative error of the GP prediction over time using the different selection approaches. Here, we report results over the span of 5 years in one week increments. These snapshots are not included in the training set of POD-DEIM. In \Cref{fig:deim_mr250} (right), we see that although the POD-DEIM approach has a lower relative error of 4.2\% at the start of the time interval, the relative error increases up to about 15\% and its error oscillates with each year of the time interval. This shows that POD-DEIM based sensor placement can become less accurate as time increases, whereas our proposed methods maintain a much lower relative error as time increases, suggesting longer term reliability of sensor placements.

\section{Proofs}\label{sec:proofs}
This section houses the proofs to the theorems presented in the Methods section (\Cref{sec:methods}). \Cref{ssec:background} covers the remaining background needed for the proofs of \Cref{thm1,thm2,thm3,thm:greedypiv}. 

\subsection{Additional Background}\label{ssec:background}
In this subsection, we state facts that are used within the proofs found throughout \Cref{sec:proofs}.
\paragraph{Cauchy's Interlacing Theorem}\label{cauchyinterlacing}
\cite[Theorem 4.3.28]{horn} \cite[Theorem III.1.5]{bhatia} Let $\mat A \in \reals^{n \times n}$ be a symmetric matrix. Let $\mat B \in \reals^{m \times m}, m<n$, be a principal submatrix of $\mat A$. Then, for $ 1 \le i \le  m$, the following inequality holds
        \begin{align*}
                \lambda_i(\mat A) \geq \lambda_i(\mat B) \geq \lambda_{i + n - m}(\mat A).
        \end{align*}
        As noted in \cite[Corollary III.1.5]{bhatia}, if $m = n-1$, then 
\begin{align} \label{eigbounds2}
    \lambda_1(\mat A) \geq \lambda_1(\mat B) \geq \lambda_2(\mat A) \geq \dots \geq \lambda_{n-1}(\mat B) \geq \lambda_n(\mat A).
\end{align}

\paragraph{Loewner partial ordering}
Let $\mat{A}, \mat{B} \in \reals^{n \times n}$ be symmetric matrices. We say $\mat{A} \preceq \mat{B}$ if $\mat{B} - \mat{A}$ is positive semidefinite. This defines a partial ordering on the space of symmetric matrices called the Loewner partial ordering. Properties of this ordering can be found in~\cite[Chapter 7.7]{horn}, which we recapitulate here. If $\mat{A}\preceq \mat{B}$ and $\mat{X} \in \reals^{m\times n}$, then $\mat{XAX}\t \preceq \mat{XBX}\t$. By Weyl's inequality, we have 
\begin{equation}\label{eqn:weylconsq}  \lambda_i(\mat{A}) \le \lambda_i(\mat{B}) \qquad 1 \le i \le n. \end{equation} If $\mat{0} \preceq \mat{A} \preceq \mat{B}$, then $(\mat{I}+\mat{A})^{-1} \succeq (\mat{I}+\mat{B})^{-1}$ and 
\begin{equation}\label{eqn:loewnerinter}
\mat{X}(\mat{I}+\mat{A})^{-1}\mat{X}\t \succeq \mat{X}(\mat{I}+\mat{B})^{-1}\mat{X}\t.
\end{equation}

We present the following lemma, which will be used repeatedly. 
\begin{lemma} \label{matbounds}
Consider the matrix product $\mat A \mat B$ for $\mat A \in \reals^{m \times n}$ and $\mat B \in \reals^{n \times \ell}$. Then 
$$ (\mat{AB})\t (\mat{AB}) \preceq  \|\mat{A}\|_2^2 \mat{B}\t\mat{B}. $$
Furthermore, for $1 \leq i \leq \min\{m,\ell\}$,
\begin{align} \label{eiglemma}
\lambda_i( \mat B\t \mat A\t \mat A \mat B ) \le & \> \|\mat{A}\|_2^2 \lambda_i(\mat B\t \mat B) \\ \label{eiglemma2}
\lambda_i( \mat B\t \mat A\t \mat A \mat B )   \leq & \>  \|\mat B\|_2^2 \lambda_i(\mat A\t \mat A)  .
\end{align}
\end{lemma}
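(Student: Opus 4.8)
The plan is to reduce everything to one elementary fact: for any matrix $\mat M$ one has $\mat M\t\mat M \preceq \|\mat M\|_2^2\,\mat I$, because the largest eigenvalue of $\mat M\t\mat M$ equals $\sigma_1(\mat M)^2 = \|\mat M\|_2^2$, so $\|\mat M\|_2^2\,\mat I - \mat M\t\mat M$ has only nonnegative eigenvalues. Everything else is congruence and Weyl's inequality, both recalled above.

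First I would establish the Loewner bound. Taking $\mat M = \mat A$ gives $\mat A\t\mat A \preceq \|\mat A\|_2^2\,\mat I_n$. Conjugating both sides by $\mat B\t$ (the congruence property of the Loewner order, with $\mat X = \mat B\t$) yields $\mat B\t(\mat A\t\mat A)\mat B \preceq \|\mat A\|_2^2\,\mat B\t\mat B$, which is precisely $(\mat{AB})\t(\mat{AB}) \preceq \|\mat A\|_2^2\,\mat B\t\mat B$. Inequality \eqref{eiglemma} then follows immediately by applying the Weyl consequence \eqref{eqn:weylconsq} to this ordering: $\lambda_i\big((\mat{AB})\t(\mat{AB})\big) \le \lambda_i\big(\|\mat A\|_2^2\,\mat B\t\mat B\big) = \|\mat A\|_2^2\,\lambda_i(\mat B\t\mat B)$ for every $i$ in the stated range.

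For \eqref{eiglemma2} I would exploit the symmetry between a product and its transpose. The nonzero eigenvalues of $(\mat{AB})\t(\mat{AB}) = \mat B\t\mat A\t\mat A\mat B$ coincide with those of $(\mat{AB})(\mat{AB})\t = \mat A(\mat B\mat B\t)\mat A\t$; concretely $\lambda_i(\mat B\t\mat A\t\mat A\mat B) = \lambda_i(\mat A\mat B\mat B\t\mat A\t)$ for $1 \le i \le \min\{m,\ell\}$, both being $\sigma_i(\mat{AB})^2$. Now I repeat the argument with the roles of $\mat A$ and $\mat B\t$ interchanged: $\mat B\mat B\t \preceq \|\mat B\|_2^2\,\mat I_n$, congruence by $\mat A$ gives $\mat A\mat B\mat B\t\mat A\t \preceq \|\mat B\|_2^2\,\mat A\mat A\t$, and combining \eqref{eqn:weylconsq} with the identity $\lambda_i(\mat A\mat A\t) = \lambda_i(\mat A\t\mat A)$ (the same transpose trick applied to $\mat A$) gives $\lambda_i(\mat B\t\mat A\t\mat A\mat B) \le \|\mat B\|_2^2\,\lambda_i(\mat A\t\mat A)$.

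There is no real obstacle here; the only point needing a little care is the bookkeeping of eigenvalue indices across matrices of different sizes, which is handled by the standard fact that $\mat M\t\mat M$ and $\mat M\mat M\t$ share the same nonzero eigenvalues (equivalently, the same singular values). This is exactly why the range $1 \le i \le \min\{m,\ell\}$ is the natural one: it is the number of potentially nonzero squared singular values of $\mat{AB}$, beyond which all the quantities involved vanish.
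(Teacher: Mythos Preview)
Your proof is correct and matches the paper's approach exactly: start from $\mat A\t\mat A \preceq \|\mat A\|_2^2\mat I$, use congruence to obtain the Loewner bound, apply \eqref{eqn:weylconsq} for \eqref{eiglemma}, and for \eqref{eiglemma2} invoke $\lambda_i(\mat B\t\mat A\t\mat A\mat B)=\lambda_i(\mat A\mat B\mat B\t\mat A\t)$ and repeat. The paper's version is just more terse (``argue as before''), while you spell out the transpose bookkeeping.
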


\begin{proof}
The properties of Loewner ordering ensure the majorizing relations $\mat{A}\t\mat{A} \preceq \|\mat{A}\|_2^2 \mat{I}$ and $(\mat{AB})\t \mat{AB} \preceq \|\mat{A}\|_2^2 \,\mat{B}\t\mat{B}$. Then,~\eqref{eiglemma} follows from~\eqref{eqn:weylconsq}. For the second set of inequalities, use $\lambda_i(\mat B\t \mat A\t \mat A \mat B) = \lambda_i( \mat A \mat B \mat B\t \mat A\t)$ and argue as before. 
\end{proof}

\paragraph{Orthogonal Projectors}
 Orthogonal projectors are square matrices that are both symmetric and idempotent. Let $\mat P \in \reals^{n \times n}$ be an orthogonal projector. Let $\vec x \in \reals^n$ be any vector. Then $\vec x\t \vec P \vec x = \vec x\t \vec P^2 \vec x = \vec x\t \vec P\t \vec P \vec x = \| \mat P \vec x \|_2 \geq 0$. Thus, the matrix $\mat P$ is symmetric positive semi-definite. Since $(\mat I_n - \mat P)\t = \mat I_n - \mat P$ and $(\mat I_n - \mat P)^2 = (\mat I_n - \mat P)(\mat I_n - \mat P) = \mat I_n - 2 \mat P + \mat P^2 = \mat I_n - \mat P$, the matrix $\mat I_n - \mat P$ is also an orthogonal projector. Thus, we may write $\mat 0 \preceq \mat P \preceq \mat I$.

\paragraph{Additional Lemmas} We collect several lemmas that will be used in our analysis.

\begin{lemma} \cite[Problem III.6.2]{bhatia}
    For matrices $\mat A \in \reals^{m \times n}$ and $\mat B \in \reals^{n \times p}$, the singular values $\sigma_p(\mat {AB}) \leq \sigma_p(\mat A)\sigma_1(\mat B)$ for $1 \leq p \leq \min\{m,n\}$. \label{fact2}
\end{lemma}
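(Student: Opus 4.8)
\textbf{Proof proposal for Lemma~\ref{fact2}.}

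The statement is the standard submultiplicativity bound for singular values under matrix products: $\sigma_p(\mat{AB}) \le \sigma_p(\mat{A})\,\sigma_1(\mat{B})$. The plan is to reduce it to the Courant--Fischer min-max characterization of singular values together with the operator-norm bound $\|\mat{B}\vec{x}\|_2 \le \sigma_1(\mat{B})\|\vec{x}\|_2$. Recall that for a matrix $\mat{M}\in\reals^{m\times q}$ and $1 \le p \le \min\{m,q\}$, we have
\begin{align*}
    \sigma_p(\mat{M}) = \min_{\substack{\mathcal{W}\subseteq \reals^q \\ \dim \mathcal{W} = q-p+1}} \ \max_{\substack{\vec{x}\in\mathcal{W}\\ \vec{x}\ne \vec 0}} \frac{\|\mat{M}\vec{x}\|_2}{\|\vec{x}\|_2}.
\end{align*}

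First I would fix the subspace $\mathcal{W}\subseteq\reals^n$ of dimension $n-p+1$ achieving the minimum in the min-max formula for $\sigma_p(\mat{A})$, so that $\|\mat{A}\vec{x}\|_2 \le \sigma_p(\mat{A})\|\vec{x}\|_2$ for all $\vec{x}\in\mathcal{W}$. Then, for any $\vec{y}\in\reals^p$, consider $\mat{B}\vec{y}$; the difficulty is that $\mat{B}\vec{y}$ need not lie in $\mathcal{W}$, so one cannot directly feed it into the bound for $\mat{A}$. The cleaner route is to instead apply the min-max formula for $\sigma_p(\mat{AB})$ directly: take the \emph{same} optimal subspace $\mathcal{W}$ for $\mat{A}$ and pull it back through $\mat{B}$, i.e.\ consider $\mathcal{V} := \mat{B}^{-1}(\mathcal{W}) = \{\vec{y}\in\reals^p : \mat{B}\vec{y}\in\mathcal{W}\}$. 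Since $\mathcal{W}$ has codimension $p-1$ in $\reals^n$, the preimage $\mathcal{V}$ has codimension at most $p-1$ in $\reals^p$, hence $\dim\mathcal{V} \ge p-(p-1) = 1$; more precisely $\dim \mathcal V \ge p - (p-1)$, and since for the min-max formula for $\sigma_p(\mat{AB})$ we need a subspace of dimension $p-p+1 = 1$, any nonzero vector in $\mathcal{V}$ suffices. Wait---this gives $\sigma_p(\mat{AB}) \le \max$ over a $1$-dimensional space, which is too weak; the index $p$ here is playing the role of the \emph{last} singular value when $p=\min\{m,n\}$, so the clean statement is really about $\sigma_p$ where $p$ indexes from the small end appropriately.

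Given the subtlety, the safest approach is the following direct one. For any vector $\vec{y}$ in the optimal $(p)$-dimensional subspace $\mathcal{V}$ for $\mat{AB}$ in the \emph{max-min} form $\sigma_p(\mat{M}) = \max_{\dim\mathcal{V}=p}\min_{\vec 0 \ne \vec{x}\in\mathcal{V}}\|\mat{M}\vec{x}\|_2/\|\vec{x}\|_2$, I would write $\|\mat{AB}\vec{y}\|_2 = \|\mat{A}(\mat{B}\vec{y})\|_2 \le \sigma_1(\mat{A})\|\mat{B}\vec{y}\|_2$ and also $\|\mat{AB}\vec y\|_2 \le \|\mat A\|_2 \|\mat B \vec y\|_2$. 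The cleanest correct derivation uses the max-min formula for $\sigma_p(\mat B)$: there is a subspace $\mathcal{V}_0$ of dimension $p$ with $\min_{\vec 0\ne\vec y\in\mathcal{V}_0}\|\mat{B}\vec{y}\|_2/\|\vec y\|_2 \ge \sigma_p(\mat B)$ --- that is not what we want either. Let me record the approach I would actually commit to: use $\sigma_p(\mat{AB})^2 = \lambda_p(\mat{B}\t\mat{A}\t\mat{A}\mat{B})$ and invoke \Cref{eiglemma2} from \Cref{matbounds} with the substitution $\mat A \leftarrow \mat A$, $\mat B \leftarrow \mat B$, which gives $\lambda_p(\mat{B}\t\mat{A}\t\mat{A}\mat{B}) \le \|\mat{B}\|_2^2\,\lambda_p(\mat{A}\t\mat{A}) = \sigma_1(\mat{B})^2\sigma_p(\mat{A})^2$; taking square roots yields the claim. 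The main obstacle is simply making sure the index $p$ is in the valid range $1\le p\le\min\{m,n\}$ so that $\lambda_p(\mat{A}\t\mat{A})$ equals $\sigma_p(\mat{A})^2$ rather than $0$, and that \Cref{matbounds} applies with $\ell = p$ (the number of columns of the second factor), which requires $p \le \min\{m, p\}$ --- automatically satisfied. So in fact this lemma is an immediate corollary of \Cref{matbounds}, and the write-up is one or two lines.
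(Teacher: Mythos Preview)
The paper does not actually prove this lemma—it is stated with a citation to \cite{bhatia} and no proof is given. Your final approach, reducing it to \eqref{eiglemma2} in \Cref{matbounds} via
\[
\sigma_j(\mat{AB})^2 = \lambda_j(\mat{B}\t\mat{A}\t\mat{A}\mat{B}) \le \|\mat B\|_2^2\,\lambda_j(\mat A\t \mat A) = \sigma_1(\mat B)^2\,\sigma_j(\mat A)^2,
\]
is correct and is indeed a one-line consequence of material already established in the paper; so your observation that \Cref{fact2} is an immediate corollary of \Cref{matbounds} is exactly right.

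Two remarks for the clean write-up. First, the exploratory Courant--Fischer detours in your first two paragraphs do not go anywhere and should be dropped entirely; only the last three sentences constitute the proof. Second, the source of the confusion in your index-range checks is that the lemma as stated overloads the symbol $p$ as both the column dimension of $\mat B$ and the singular-value index—use a fresh index letter (say $j$) in your version, and then the range condition from \Cref{matbounds} is simply $1\le j\le \min\{m,\ell\}$ with $\ell$ the column count of $\mat B$, which covers every $j$ for which $\sigma_j(\mat{AB})$ is nonzero.
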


\begin{lemma} 
     \label{frob} For a matrix $\mat A \in \reals^{n \times n}$, 
     we have
     $\|\mat A\|_2^2 \leq \|\mat A\|^2_F \leq n \max_{1\leq j \leq n}\|\vec a_j\|^2_2$.

     \end{lemma}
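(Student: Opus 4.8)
The plan is to dispatch the two inequalities independently, each by a one-line computation with singular values, so the write-up should be very short.

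For the left-hand inequality $\|\mat A\|_2^2 \le \|\mat A\|_F^2$, I would start from the SVD $\mat A = \mat U \mat \Sigma \mat V\t$ with $\sigma_1(\mat A) \ge \dots \ge \sigma_n(\mat A) \ge 0$. Then $\|\mat A\|_2 = \sigma_1(\mat A)$, and since the Frobenius norm is unitarily invariant, $\|\mat A\|_F^2 = \|\mat \Sigma\|_F^2 = \sum_{i=1}^n \sigma_i(\mat A)^2$. Every summand is nonnegative, so $\sigma_1(\mat A)^2 \le \sum_{i=1}^n \sigma_i(\mat A)^2$, which is exactly the claim.

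For the right-hand inequality, I would write $\|\mat A\|_F^2 = \trace(\mat A\t \mat A) = \sum_{j=1}^n \vec a_j\t \vec a_j = \sum_{j=1}^n \|\vec a_j\|_2^2$, where $\vec a_1, \dots, \vec a_n$ are the columns of $\mat A$, and then bound each of the $n$ terms by the largest, giving $\sum_{j=1}^n \|\vec a_j\|_2^2 \le n \max_{1 \le j \le n} \|\vec a_j\|_2^2$.

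There is no real obstacle here: the only things being used are the standard identities $\|\mat A\|_F^2 = \sum_i \sigma_i(\mat A)^2$ and $\|\mat A\|_F^2 = \sum_j \|\vec a_j\|_2^2$, both immediate from $\|\mat A\|_F^2 = \trace(\mat A\t\mat A)$ and the cyclic/unitary invariance of the trace. (The identical argument also handles rectangular $\mat A$, replacing the index range $n$ in the first sum by $\rank(\mat A)$ and in the second by the number of columns; the square case as stated is all that is needed in the sequel.)
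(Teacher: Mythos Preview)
Your proof is correct and essentially matches the paper's: the paper dismisses the first inequality as ``a well-known fact'' and proves the second exactly as you do, via $\|\mat A\|_F^2 = \sum_{j=1}^n \|\vec a_j\|_2^2 \le n\max_j \|\vec a_j\|_2^2$. Your SVD justification of the first inequality simply fills in what the paper leaves implicit.
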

    \begin{proof}
        The first inequality is a well-known fact so we proceed to showing the second inequality:
        \begin{align*}
            \|\mat A\|_F^2 = \sum_{j= 1}^n \|\vec{a}_{j}\|_2^2 \leq n \max_{1\leq j \leq n}\|\vec a_j\|^2_2.
        \end{align*}
    \end{proof}

\begin{lemma}\cite[Lemma 2.1]{drmac2018discrete}\label{lem:srrqrbounds}
    Let $\mat V_k \in \reals^{n \times k}$ such that $\mat V_k\t\mat V_k = \mat I_k$. Using \cite[Algorithm 4]{srrqr} on $\mat V_k$ with the target rank set to $k$ and $f \geq 1$ results in a selection matrix $\mat S$ such that
    \begin{align*}
        \frac{1}{\sqrt{1 + f^2k(n-k)}} \leq \sigma_j(\mat V_k\t \mat S)\leq 1, \quad 1\leq j \leq k.
    \end{align*}
    Furthermore,
    $1 \leq \|[\mat V_k\t\mat S]^{-1}\|_2\leq \sqrt{1 + f^2k(n-k)}.$
\end{lemma}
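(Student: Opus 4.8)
The plan is to show the two displays in the lemma by combining the strong rank-revealing QR guarantees with the orthonormality of $\mat{V}_k$. First I would set up the sRRQR factorization of the short-and-wide matrix $\mat{V}_k\t \in \reals^{k\times n}$ exactly as in~\eqref{qrmats}: with a user-chosen parameter $f>1$ and target rank $k$ (which equals $\rank(\mat{V}_k\t)$ since $\mat{V}_k$ has orthonormal columns), the modified Gu--Eisenstat algorithm produces a permutation $\mat\Pi = \bmat{\mat{S} & \mat{P}}$ such that
\[
    \mat{V}_k\t\mat\Pi = \mat{Q}\bmat{\mat{R}_{11} & \mat{R}_{12}},
\]
with $\mat{Q}\in\reals^{k\times k}$ orthogonal and $\mat{R}_{11}\in\reals^{k\times k}$ upper triangular and nonsingular. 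By~\eqref{eqn:srrqr_bounds} applied to $\mat{A} = \mat{V}_k\t$, we have $\sigma_i(\mat{V}_k\t) \ge \sigma_i(\mat{R}_{11}) \ge \sigma_i(\mat{V}_k\t)/p(n,k)$ for $1\le i\le k$, where $p(n,k)=\sqrt{1+f^2k(n-k)}$.

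Next I would observe that since $\mat{V}_k\t\mat{V}_k = \mat{I}_k$, every singular value of $\mat{V}_k\t$ equals $1$, so the bound reads $1 \ge \sigma_i(\mat{R}_{11}) \ge 1/p(n,k)$. The key remaining point is to identify $\sigma_i(\mat{R}_{11})$ with $\sigma_i(\mat{V}_k\t\mat{S})$: extracting the first $k$ columns from $\mat{V}_k\t\mat\Pi$ gives $\mat{V}_k\t\mat{S} = \mat{Q}\mat{R}_{11}$, and since $\mat{Q}$ is orthogonal this is a product with an orthogonal factor, hence $\sigma_i(\mat{V}_k\t\mat{S}) = \sigma_i(\mat{R}_{11})$ for all $i$. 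Combining,
\[
    \frac{1}{\sqrt{1+f^2k(n-k)}} \le \sigma_j(\mat{V}_k\t\mat{S}) \le 1, \qquad 1\le j\le k,
\]
which is the first claim. For the second claim, note that $\mat{V}_k\t\mat{S}$ is a $k\times k$ matrix whose smallest singular value is positive by the above, hence it is invertible, and $\|[\mat{V}_k\t\mat{S}]^{-1}\|_2 = 1/\sigma_{\min}(\mat{V}_k\t\mat{S}) = 1/\sigma_k(\mat{V}_k\t\mat{S})$. The singular value bounds then give $1 \le \|[\mat{V}_k\t\mat{S}]^{-1}\|_2 \le \sqrt{1+f^2k(n-k)}$, where the lower bound uses $\sigma_k(\mat{V}_k\t\mat{S})\le 1$.

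I expect the only genuine subtlety to be the justification that the sRRQR bounds~\eqref{eqn:srrqr_bounds}, originally stated for tall matrices, transfer to the short-and-wide case $\mat{V}_k\t$; this is exactly the modification referenced in~\cite[Section 3.1.2]{broadbent2010subset} and~\cite[Algorithm 4]{srrqr}, and I would cite it rather than reprove it. Everything else is a one-line consequence of orthonormality of $\mat{V}_k$ (making all its singular values equal to $1$) together with invariance of singular values under multiplication by the orthogonal matrix $\mat{Q}$. Since the lemma is quoted from~\cite[Lemma 2.1]{drmac2018discrete}, an alternative is simply to cite that reference; but the short self-contained argument above is worth including for completeness.
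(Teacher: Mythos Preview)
Your argument is correct and is precisely the natural self-contained proof: apply the sRRQR bounds~\eqref{eqn:srrqr_bounds} to the short-and-wide matrix $\mat{V}_k\t$, use that orthonormality forces all singular values of $\mat{V}_k\t$ to equal $1$, and identify $\sigma_i(\mat{R}_{11})$ with $\sigma_i(\mat{V}_k\t\mat{S})$ via the orthogonal factor $\mat{Q}$. The paper itself does not prove this lemma at all; it simply quotes it from~\cite[Lemma 2.1]{drmac2018discrete} and uses it as a black box in the proofs of Corollary~\ref{cor:dopt}, Theorem~\ref{thm3}, and Theorem~\ref{thm:greedypiv}. Your write-up therefore goes beyond what the paper provides, and your closing remark---that one could either cite the reference or include the short argument for completeness---accurately reflects the situation.
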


\subsection{Bounds for Conceptual GKS}\label{ssec:gksproof}
\begin{proof}[Proof of \Cref{thm1}] 
The proof is similar to that of~\cite[Theorem 3.2]{eswar2024bayesian}. We prove the sequence of inequalities starting from the left to right.

The inequalities $\lambda_i(\mat K) \geq \lambda_i(\mat S^\top \mat K \mat S)$ for $1 \le i \le k$ follow from Cauchy's interlacing theorem \eqref{cauchyinterlacing} on the principal submatrix  $\mat S^\top \mat K \mat S$. 
Since the $\log$ function is monotonic, we have
\begin{align*}
     \ld(\mat I_k + \eta^{-2} \mat \Lambda_k) & = \sum_{i=1}^{k} \log(1+\eta^{-2}\lambda_i(\mat K))\\ & \geq \sum_{i=1}^{k} \log(1+\eta^{-2}\lambda_i(\mat S^\top \mat K \mat S))  = \ld(\mat I_k +\eta^{-2}\mat S^\top \mat K \mat S).
\end{align*}
Since $\mat{S}$ is an arbitrary selection operator, we have shown $\ld(\mat I_k + \eta^{-2} \mat \Lambda_k) \ge \phi_D(\mat{S}_{\rm opt})$.

The second set of inequalities follow from the definition of $\mat{S}_{\rm opt}$.
For the final set of inequalities, write 
\[ \mat\Lambda_k = [\mat V_k\t\mat S]^{-\top} [\mat V_k\t\mat S]\t \mat \Lambda_k^{1/2} \mat \Lambda_k^{1/2} [\mat V_k\t\mat S][\mat V_k\t\mat S]^{-1},\]
and apply~\eqref{eigbounds2} to get for $1 \le i \le k$
\begin{align*}
    \lambda_i(\mat \Lambda_k) \leq \|[\mat V_k\t\mat S]^{-1}\|_2^2 \lambda_i(\mat S\t \mat K \mat S).
\end{align*}
Using this inequality, we can now bound $ \ld(\mat I_k + \eta^{-2}\mat S^\top \mat K \mat S) $ from below. That is, 
\begin{align*}
    \ld(\mat I_k + \eta^{-2}\mat S^\top \mat K \mat S) & = \sum_{i=1}^{k} \log(1+\eta^{-2}\lambda_i(\mat S^\top \mat K \mat S)) \\ & \geq  \sum_{i=1}^{k} \log \left(1+\eta^{-2}\frac{\lambda_i(\mat K)}{||[\mat V_k^\top \mat S]^{-1}||^2_2} \right)  = \ld\left(\mat I_k + \frac{\eta^{-2} \mat \Lambda_k}{||[\mat V_k^\top \mat S]^{-1}||^2_2}\right).
\end{align*} 
Identify $\phi_D(\mat{S}) = \ld(\mat I_k + \eta^{-2}\mat S^\top \mat K \mat S)$ to complete the proof. 
\end{proof}

\subsection{Bounds on D-optimality Criterion with Nystr\"om Approximation}\label{ssec:nysproof}

\begin{proof}[Proof of \Cref{thm3}] The first inequality is obvious from the definition of $\mat{S}_{\rm opt}$. For the next inequality, note that $\mat{K} \succeq \wh{\mat{K}}$ so that $\mat{S}\t\mat{K} \mat{S} \succeq \mat{S}\t\wh{\mat{K}} \mat{S}$. So, it follows from the properties of log-determinant and~\eqref{eqn:weylconsq} that 
\[ \phi_D(\mat{S}) = \ld(\mat{I}+ \eta^{-2}\mat{S}\t\mat{K} \mat{S}) \ge \ld(\mat{I}+\eta^{-2}\mat{S}\t\wh{\mat{K}} \mat{S}). \]
Applying \Cref{thm1} to the matrix $\wh{\mat{K}}$ and expanding the log-determinant, we get 
\begin{equation}\label{eqn:inter} \phi_D(\mat{S}) \ge \sum_{i=1}^k\log\left(\mat{I} +  \frac{\eta^{-2}\lambda_i(\wh{\mat{K})}}{\|[\wh{\mat{U}}_k\t{\mat{S}}]^{-1}\|_2^2}\right) \ge \sum_{i=1}^k\log\left(\mat{I} +  \frac{\eta^{-2}\lambda_i(\mat{K})}{\|[\wh{\mat{U}}_k\t{\mat{S}}]^{-1}\|_2^2 (1 +  \beta \lambda_i(\mat \Lambda_k^{-1}))}\right) ,\end{equation}
where $\beta:=\|\mat \Lambda_\perp^{1/2} \mat \Omega_{n-k}\mat \Omega_k^\dagger\|_2^2$. In the last step, we have used the inequalities in \Cref{thm2}.

We now place probabilistic bounds on $\beta$. We have assumed that $\mat \Omega \in \reals^{n \times p}$ is a Gaussian matrix with $p \geq 4$ and $t,u\geq1$. Using the intermediate results found in \cite[Section 10.3, Theorem 10.8]{HMT}, we have with probability at most $2t^{-p} + e^{-u^2/2}$,
\begin{align*}
    \|\mat \Lambda_\perp^{1/2} \mat \Omega_{n-k} \mat \Omega_k^\dagger\|_2 > \|\mat \Lambda_\perp^{1/2}\|_2 \left( \sqrt{\frac{3k}{p+1}} \cdot t +  \frac{e\sqrt{k+p}}{p+1} \cdot ut\right) + \|\mat \Lambda_\perp^{1/2}\|_F \frac{e\sqrt{k+p}}{p+1} \cdot t. 
\end{align*}
Note that $\|\mat \Lambda_\perp^{1/2}\|_2 = \|\mat \Lambda_\perp\|_2^{1/2}$ and $\|\mat \Lambda_\perp^{1/2}\|_F = \sqrt{\trace(\mat\Lambda_\perp)}$. Let $t=e,u=\sqrt{2p}, p\geq 4$ similar to \cite[Section 10.3, Corollary 10.9]{HMT}. 
Thus, with probability at least $1-3e^{-p}$,
\begin{align*}
    \beta = \|\mat \Lambda_\perp^{1/2} \mat \Omega_{n-k} \mat \Omega_k^\dagger\|_2^2 \leq \left[  \|\mat \Lambda_\perp\|_2^{1/2} \left( 16 \sqrt{1+ \frac{k}{p+1}}\right) + \sqrt{\trace(\mat \Lambda_\perp)} \frac{8\sqrt{k+p}}{(p+1)} \right]^2 =: \gamma.
\end{align*}
Next, from \Cref{lem:srrqrbounds}, we obtain the deterministic bound $\|\wh{\mat{V}}_k\t{\mat{S}}]^{-1}\|_2^2 \le 1 + f^2k(n-k)$. Plug this inequality as well as the bound on $\beta$ and rearrange to get the final bound. 
\end{proof}

\subsection{Bounds on greedy pivoted Cholesky decomposition}\label{ssec:choleskyeigs}
\begin{proof}[Proof of \Cref{thm:greedypiv}]
 The proof proceeds in several steps.

\paragraph{Step 1: Summary of complete pivoting} The greedy pivoted is nothing but complete pivoting. Thus, we summarize some results from complete pivoting that will be useful in our analysis. Consider the positive semidefinite matrix $\mat K$. Then by \cite[Theorem 10.9]{higham2002accuracy}, there exists a permutation matrix $\mat \Pi$ such that
\[ \mat{A} := \mat{\Pi}\t\mat{K\Pi} = \bmat{ \mat{A}_{11} & \mat{A}_{12} \\ \mat{A}_{12}\t &\mat{A}_{22}} = \mat{R}\t \mat{R} \] 
where $\mat\Pi$ is the permutation matrix obtained by interchanges in complete pivoting and $\mat{R}$ is the (upper triangular) Cholesky factor. By~\cite[Equation 10.13]{higham2002accuracy}, the following bounds apply to the Cholesky factor
\begin{equation}\label{eqn:choleskyentries}r_{ss}^2 \ge \sum_{i=s}^{\min\{j,t\}}r_{ij}^2, \qquad s+1\le j \le n, 1 \le s \le t, \end{equation}
where $t = \rank(\mat{A})$. In particular, this implies that the diagonal elements $r_{11} \geq r_{22} \geq \dots \geq r_{tt}$, and for the off-diagonal elements
\[ r_{ss} \ge |r_{sj} | \qquad 1 \le s \le t, s \le j \le n.\]
Note that for $s >  t $, $r_{sj} = 0$ for $1 \le j \le n$.
\paragraph{Step 2: Nystr\"om approximation} Since the matrix $\mat A$ and $\mat K$ are orthogonally similar, we use the Nystr\"om approximation of $\mat A$ to derive bounds on the eigenvalues of $\mat K$. The Nystr\"om approximation for $\mat{A}$ is 
\[ \mat{A} \approx \bmat{\mat{A}_{11} \\ \mat{A}_{12}\t} \mat{A}_{11}^{-1} \bmat{\mat{A}_{11} & \mat{A}_{12}} = \bmat{\mat{A}_{11} & \mat{A}_{12} \\ \mat{A}_{12}\t & \mat{A}_{12}\t\mat{A}_{11}^{-1} \mat{A}_{12} }. \] 
Thus, the Nystr\"om approximation to $\mat{K}$ is 
\[ \wh{\mat{K}} = \mat\Pi\bmat{\mat{A}_{11} & \mat{A}_{12} \\ \mat{A}_{12}\t & \mat{A}_{12}\t\mat{A}_{11}^{-1} \mat{A}_{12} }\mat\Pi\t. \]
We can express this as $\wh{\mat{K}} = \mat{FF}\t$, where 
\[ \mat{F} = \mat\Pi\bmat{\mat{A}_{11} \\ \mat{A}_{12}\t} \mat{R}_{11}^{-\top},\]
where $\mat{R}_{11} \in \reals^{k\times k}$ is the leading principal submatrix of $\mat{R}$. In \cref{alg:cholgks}, the GKS approach is applied to $\wh{\mat{K}} $. Thus, from the proof of \cref{thm1}, 
\[ \lambda_i(\mat{S}\t\wh{\mat{K}}\mat{S}) \ge \frac{\lambda_i(\wh{\mat{K}})}{\|[\mat{V}_k\t\mat{S}]^{-1}\|_2^2}, \qquad 1 \le i \le k , \]
where $\mat{V}_k$ are the right singular vectors of $\mat{F}$ and thus, $\wh{\mat{K}}$. By using the orthogonal similarity and Cauchy interlacing theorem, we can write 
\[ \lambda_i(\mat{S}\t\wh{\mat{K}}\mat{S}) \ge \frac{\lambda_i(\mat{A}_{11})}{\|[\mat{V}_k\t\mat{S}]^{-1}\|_2^2}, \qquad 1 \le i \le k.   \]

\paragraph{Step 3: Bounds for $\mat{A}_{11}$} Let $\mat{R}_{1:i}$ represent the upper triangular leading principal submatrix of $\mat{R}$ for $1\le i\le k $. Thus, by~\cite[Theorem 8.14]{higham2002accuracy}, 
\[ \frac{1}{r_{ii}}\le \|\mat{R}_{1:i}^{-1}\|_2 \le \frac{2^{i-1}}{r_{ii}} \qquad 1 \le i \le k. \]
By Cauchy interlacing theorem, 
\[ \lambda_i(\mat{A}) \ge \sigma_i(\mat{R}_{1:i})^2 \ge \frac{r_{ii}^2}{4^{i-1}} \qquad 1 \le i \le k. \]
Next, we provide lower bounds for $r_{ii}$, following the strategy in~\cite[Theorem 6.31]{lawson1995solving}. Consider a block matrix of $\mat R$ of size $(n+1-i) \times (n+1-i)$ consisting of the last $(n+1-i)$ rows and columns of $\mat R$ and call this $\mat R_{i:n}$. Note that the largest element in magnitude of $\mat R_{i:n}$ is $|r_{ii}|$, and we can partition 
    \[ \mat{A} = \bmat{* & * \\ * & \mat{R}_{i:n}\t\mat{R}_{i:n}} \qquad 1\le i \le k. \]
    We may apply Cauchy interlacing theorem to the sequence of matrices $\{\mat{R}_{i:n}\}_{i=1}^k$
    \begin{align*}
          \lambda_i(\mat A) =\sigma^2_i(\mat R_{1:n}) \leq \sigma^2_{i-1}(\mat R_{2:n}) \leq \dots \leq \sigma^2_{1}(\mat R_{i:n}) = \|\mat R_{i:n}\|_2.
    \end{align*}
    Applying \Cref{frob} to $\mat R_{i:n}$, we have 
    \begin{align*}
        \|\mat R_{i:n}\|_2 \leq (n+1-i)^{1/2} \max_{1 \leq j \leq n-i+1} \|\mat{R}_{i:n}\vec{e}_j\|_2 = (n+1-i)^{1/2}|r_{ii}|,
    \end{align*}
    where in the last step, we have used~\eqref{eqn:choleskyentries}. Thus, we have 
    \[ \lambda_i(\mat{A}) \ge \lambda_i(\mat{A}_{11}) \ge \frac{\lambda_i(\mat{A})}{4^{i-1}(n-i+1)} \qquad 1 \le i \le k. \] 
\paragraph{Step 4. Finishing the proof}
Combining the intermediate results from Steps 2 and 3, 
\[ \lambda_i(\mat{S}\t\wh{\mat{K}}\mat{S}) \ge \frac{\lambda_i(\mat{A})}{4^{i-1}(n-i+1)(1 + f^2k(n-k))}, \qquad 1 \le i \le k.   \]
In the last step, we have also used \cref{lem:srrqrbounds}. Once again using Cauchy interlacing theorem
\[ \lambda_i(\mat{S}\t{\mat{K}}\mat{S}) \ge \lambda_i(\mat{S}\t\wh{\mat{K}}\mat{S}) \ge \frac{c_{ii}\lambda_i(\mat{K})}{1 + f^2k(n-k)} \qquad 1 \le i \le k, \]
where the diagonal matrix $\mat{C}$ is defined in the statement of the theorem. We have also used the fact that $\mat{A}$ and $\mat{K}$ have the same eigenvalues by similarity. From the eigenvalue bounds, it is easy to deduce the result involving the D-optimality.

\end{proof}

\section{Conclusion}

In this paper, we presented several algorithms for D-optimal sensor placement in Gaussian process regression using column subset selection. By combining numerical linear algebraic approaches to the sensor placement problem, we develop a framework for placing all $k$ sensors at once. We provided various approaches based on the GKS framework. When the covariance matrix is too large to form, we paired the Nystr\"om approximation with the GKS framework. We give theoretical bounds on the D-optimality of the sensor selection when using the conceptual algorithm (\Cref{alg:cssp}), random projection Nystr\"om approximation with GKS selection (\Cref{alg:nys}), and greedy pivoted Cholesky with GKS selection (\Cref{alg:cholgks}). In addition, we provide bounds on the eigenvalues of Nystr\"om approximated matrices through random projection and greedy pivoted Cholesky. Although the asymptotic flop count of efficient greedy is the same as our proposed algorithms, the numerical experiments show that the proposed algorithms perform better than greedy in terms of D-optimality and relative error. The methods in this paper are computationally efficient and have the advantage that, with appropriately chosen hyperparameters, the sensor selection requires no observation data while providing quantifiable uncertainties in its predictions. 

There are future directions, some of which we discuss here. The sensor placements made by our presented algorithms can potentially act as initial placements in exchange algorithms. We can quantify the effect of initialization in the exchange algorithm.  In \Cref{ssec:nys}, a column sampling approach can be used instead of the random Nystr\"om approximation and a pivoted Cholesky factorization to approximate the covariance matrix $\mat K$.  The sensor selections are independent of the kernel and number of dimensions of the candidate sensor data and may allow for a wider breadth of applications of GPs such as surrogate modeling as shown in \Cref{ssec:4d}. While our numerical examples were limited to studying only the square exponential kernel, future work may focus on other kernels, e.g., Mat\'ern kernels.

\printbibliography
\appendix
In \Cref{sec:results}, we give additional results from the experimental settings discussed in the main section. In addition to the 1D and 2D cases, we also present results for a 4D surrogate modeling problem. The additional proof for bounding Nystr\"om approximated eigenvalues is in \Cref{sec:proofs}.

\section{Additional Numerical Experiments} \label{sec:results}

\subsection{Thin Liquid Film Dynamics - Performance of Pivoted Cholesky} \label{ssec:gksvsnogks}

In this experiment, we consider the performance of the pivoted Cholesky algorithms. 
Since the pivots can also be interpreted as sensor placements, we consider the pivoted Cholesky algorithms with and without the inclusion of the GKS step. As a baseline, we also compare these results to the D-optimality of the conceptual GKS selection and 10,000 realizations of random selection. From \Cref{fig:gkscompare}, we see that all the proposed algorithms perform much better than the random designs. Furthermore, we see that the addition of the GKS step after performing a pivoted Cholesky (\Cref{alg:cholgks}) factorization improves the D-optimality scores. Thus, the numerical experiments that follow focus on combining the pivoted Cholesky algorithms with the GKS framework.

\begin{figure}[!ht]
    \centering 
\includegraphics[width=0.75\linewidth]{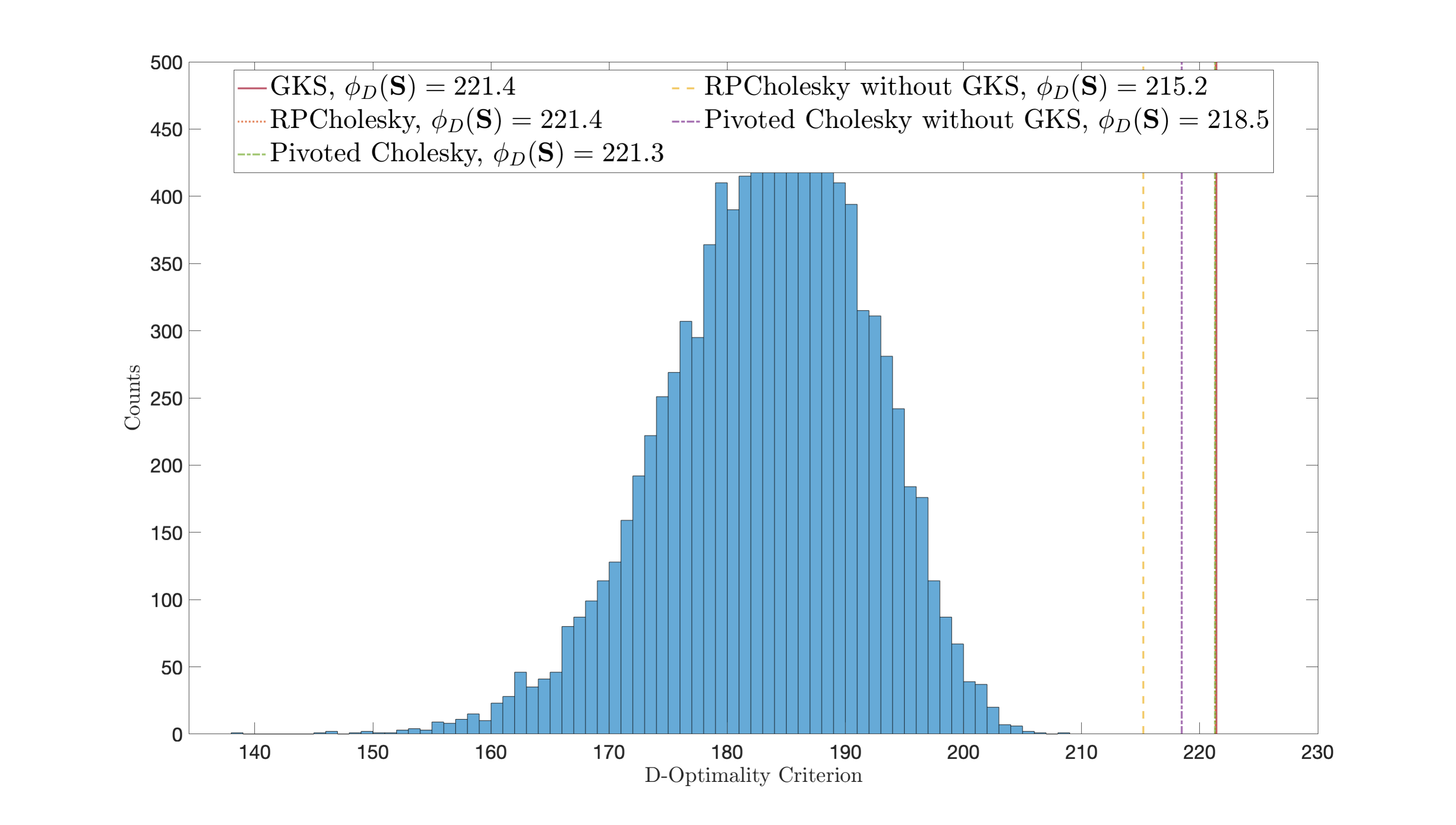}
     
    \caption{Conceptual GKS (\Cref{alg:cssp}) and rank $k$ RPCholesky and greedy pivoted Cholesky (\Cref{alg:cholgks}) with and without GKS compared with 10,000 realizations of random sensor placement in terms of D-optimality.}
    \label{fig:gkscompare}
    \end{figure}

\subsubsection{Error in Reconstruction over Time}

\Cref{tab:statsovert_1d_k50} reports the minimum, maximum, mean, and standard deviation of the relative reconstruction error over the time interval. We observe that overall, each selection method maintains a similar magnitude in relative error \eqref{relerr} over the time interval. However, the greedy selection was higher in terms of the minimum, maximum, mean and standard deviation in relative error \eqref{relerr} over time. 

\begin{table}[!ht]
    \centering
    \begin{tabular}{c|c|c|c|c}
      Algorithm &   Min&   Max&Mean &Std. Dev.\\ \hline
      Conceptual GKS &  $4.4070\e{-4}$& 
      0.0097 & 0.0033 &0.0029\\ 
     \hline
     NysGKS & $4.7016\e{-4}$& 
     0.0094 &  0.0033 &0.0029\\
     \hline
     RPCholesky + GKS  & $4.8037\e{-4}$& 
     0.0096 &0.0034 & 0.0028\\
     \hline
     Greedy pivoted Cholesky + GKS &  $4.5317\e{-4}$& 
   0.0095 &0.0034 & 0.0029\\ \hline
    Efficient Greedy &  $6.1590\e{-4}$ & 
      0.0101 & 0.0035 & 0.0029
     \end{tabular}
    \caption{Summary statistics of the relative reconstruction error for each algorithm over time $0 \le t \le 300$.}
    \label{tab:statsovert_1d_k50}
\end{table}

\subsubsection{Error and D-optimality as $k$ increases}
We now vary the number of sensors $k$ for $k = 20,21,\dots,50$ and examine how algorithm performance depends on $k$.
For this study, sensor data comes from the $t = 140$ snapshot. Here, we compare the efficient greedy \cite[Algorithm 1]{chen2018fast}, conceptual GKS (\Cref{alg:cssp}), NysGKS (\Cref{alg:nys}), RPCholesky and greedy pivoted Cholesky (\Cref{alg:cholgks}) algorithms for sensor selection. \Cref{fig:1D_k_inc} (left) shows an expected general increase in the D-optimality score of each algorithm as $k$ increases. Similarly, as we increase the number of sensors $k$, we see in \Cref{fig:1D_k_inc} (right) that the relative reconstruction error decreases. This is expected since the more available sensors there are for data collection, the more information we can expect to gain. In both figures, the efficient greedy algorithm \cite[Algorithm 1]{chen2018fast} performs slightly worse than the other algorithms in comparison. The D-optimality of the efficient greedy selection trails below the D-optimality of all other methods, and the relative error \eqref{relerr} in reconstruction using the greedy selection tends to be higher than the relative error \eqref{relerr} in reconstruction of all other selection methods.

\begin{figure}[!ht]
\label{fig:relerr_1d} 
     \centering
     \includegraphics[width=\linewidth]{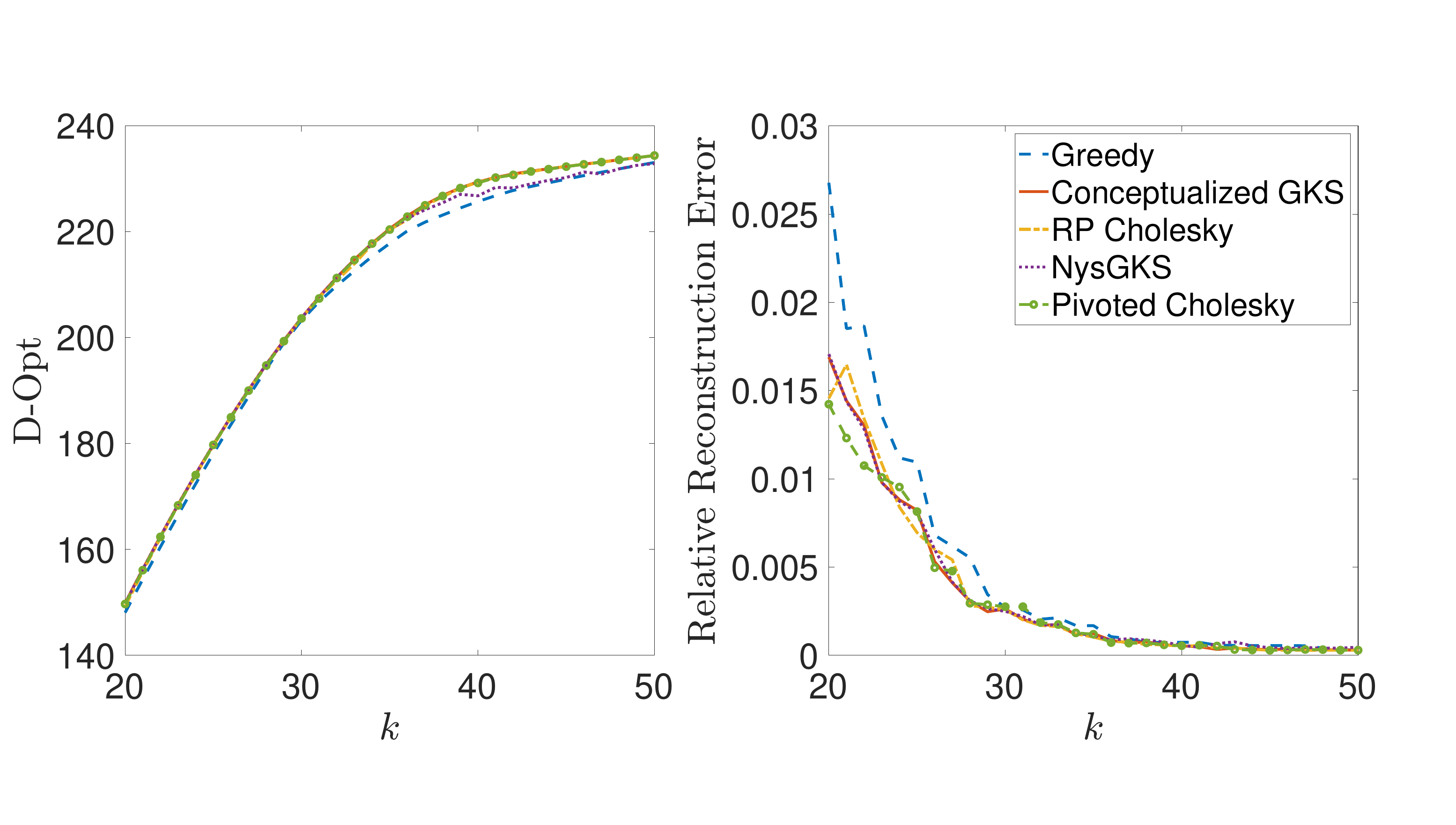}
        \caption{Plots comparing the performance of sensor selection between the conceptual GKS algorithm (\Cref{alg:cssp}), greedy algorithm~\cite[Algorithm 1]{chen2018fast}, rank $k+10$ NysGKS (\Cref{alg:nys}), and rank $k$ RPCholesky and Pivoted Cholesky (\Cref{alg:cholgks}) in terms of D-optimality (left) and relative reconstruction  error (right) as $k$ increases from $k = 20, 21, \dots, 50$.}
        \label{fig:1D_k_inc}
\end{figure}

\subsection{Sea Surface Temperature} \label{ssec:sst}
In this section, we show additional numerical results on the NOAA Optimum Interpolation (OI) SST V2 data, which was provided by the NOAA PSL, Boulder, Colorado, USA, from their website at \url{https://psl.noaa.gov}\cite{sst}. We discuss the effect on the D-optimality and relative reconstruction error \eqref{relerr} as the number of available sensors $k$ increases (see \Cref{ssec:err_dopt_k}), the computational cost of the NysGKS (\Cref{alg:nys}) selection when using H2Pack \cite{h2cite1,h2cite2} in \Cref{costvsn}, and continue the comparison between the POD-DEIM \cite{poddeim1,poddeim2,qdeim} and our methods in \Cref{sssec:anomaly}. Lastly, in \Cref{ssec:geodist} we explore the effect of using the great circle distance in the kernel function instead of Euclidean distance to account for the shape of the earth.

\subsubsection{Error and D-Optimality As \texorpdfstring{$k$}{} increases}  \label{ssec:err_dopt_k}
We report the D-optimality on the week of February 4, 2018 and the average relative error in reconstruction (\ref{relerr}) of the sea surface temperature over 5 years to compare the performance between NysGKS (\Cref{alg:nys}), RPCholesky and greedy pivoted Cholesky (\Cref{alg:cholgks}) placed sensors as the number of sensors varies from $k = 150, 155, \dots, 350$. In \Cref{fig:2D_k_inc} (left), the D-optimality of the sensor placements steadily increases as $k$ increases. The relative reconstruction error steadily decreases in \Cref{fig:2D_k_inc} (right) as the number of available sensors increases.

\begin{figure}[!ht]
    \centering
    \includegraphics[width=\linewidth]{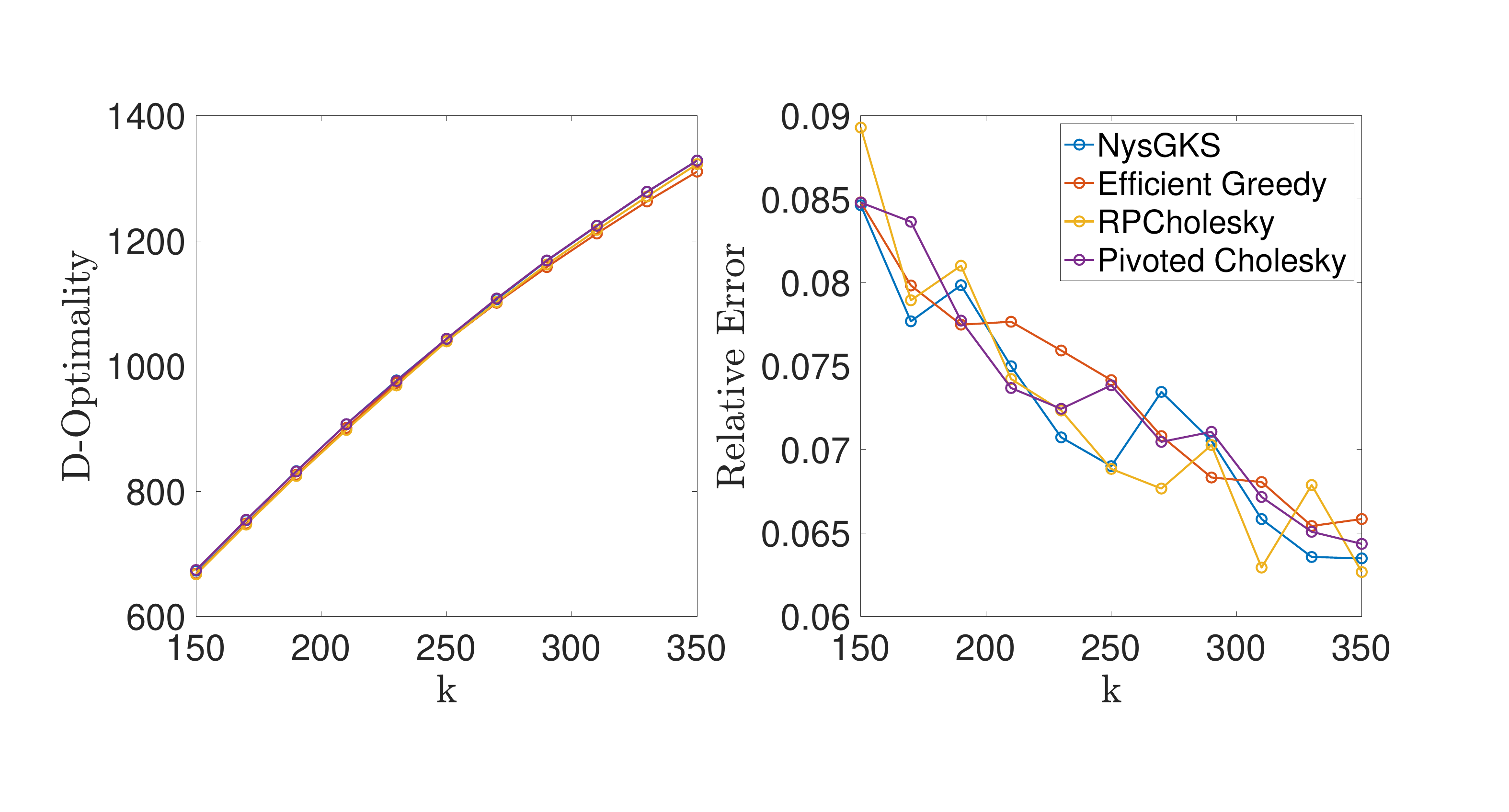}
    \caption{D-Optimality (left) and Relative Reconstruction Error (right) in the NysGKS (\Cref{alg:nys}), efficient greedy \cite[Algorithm 1]{chen2018fast}, RPCholesky and greedy pivoted Cholesky (\Cref{alg:cholgks}) algorithms as $k$ increases.
    }
    \label{fig:2D_k_inc}
\end{figure}

\subsubsection{Cost vs n}\label{costvsn}
In this section, we report the performance of the Nystr\"om approximated GKS algorithm with H2Pack \cite{h2cite1,h2cite2} as the number of candidate sensors $n$ increases. We fix the number of available sensors $k = 250$ and vary the number of candidate sensors $n$ in our sea surface temperature domain. To provide an accurate measure of the computation time, we utilize NC State University's HPC cluster. We acknowledge the computing resources provided by North Carolina State University High Performance Computing Services Core Facility (RRID:SCR\_022168). We use an Intel Xeon based Linux cluster. In \Cref{tab:h2performance}, we observe that as the number of candidate sensors $n$ increases, the rate of the increase in wall clock time is approximately linear.

\begin{table}[!ht]
    \centering
    \begin{tabular}{|c|c|c|c|c|c|}
    \hline
        Number of Candidate Sensors $n$ &  2048 &  4096 & 8192 & 16384 & 32768\\
        \hline
 Computation Time (seconds) & 15.53 &  23.18  & 30.04  & 32.13 &  38.76
\\
        \hline
    \end{tabular}
    \caption{Computational Complexity of the H2Pack Implementation of the NysGKS Algorithm on various problem sizes.}
    \label{tab:h2performance}
\end{table}

\subsubsection{Further Comparison with POD-DEIM}\label{sssec:anomaly}
In this subsection, we compare our methods to the POD-DEIM method further by looking at the relative reconstruction error \eqref{relerr} over time.  In \Cref{fig:err2d} and \Cref{fig:anomaly}, we report results over the span of 5 years in one week increments starting from the week of February 4, 2018. 

\paragraph{Performance of POD-DEIM vs POD}
\Cref{fig:err2d} is the same plot as \Cref{fig:deim_mr250} with the addition of the POD approximation error (red dashed line). We see that the POD approximation error is much lower than the POD-DEIM approximation error signaling that the DEIM step worsens the sea surface temperature reconstruction.

\begin{figure}[!ht]
    \centering
    \includegraphics[width=0.75\linewidth]{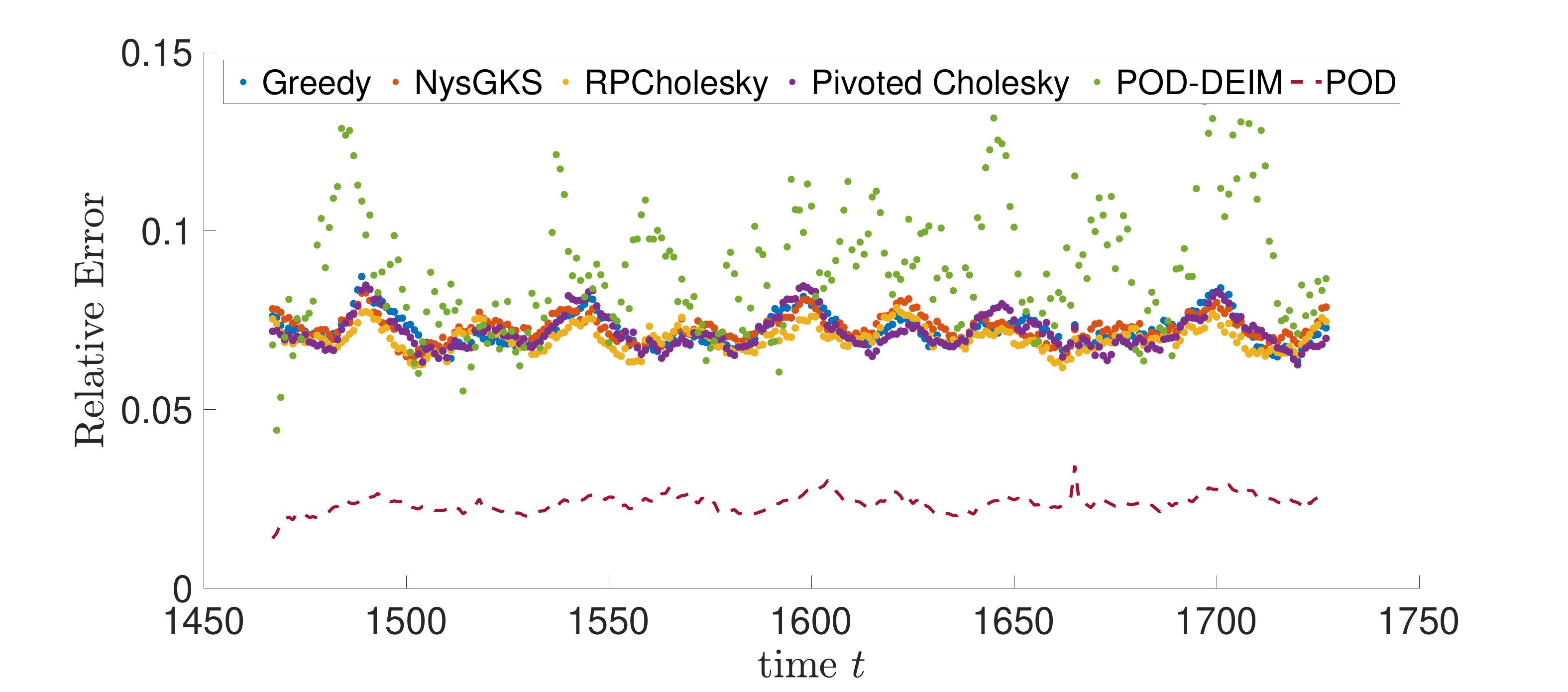}
    \caption{Relative Error in Reconstruction as $t$ increases (including POD).}
    \label{fig:err2d}
\end{figure} 

\paragraph{Unnormalized Relative Reconstruction Error}
We observe the ability for the reconstructions from each method to capture anomalies by removing the mean over the POD training set from the relative error calculation. That is, for each time step, the unnormalized relative error is computed using the formula
\begin{align} \label{eq:unnormrelerr}
    \text{error}_\text{unnormalized} = \frac{\|\vec m(\vec f_p)-\vec f\|_2}{\|\vec f-\mu\|_2},
\end{align}
where $\vec m(\vec f_p)$ is defined in \eqref{eq:meanrecon}, $\vec{f}$ is the vector containing the true function values, and $\mu$ is the mean over the POD training set. We observe that our proposed methods behave similarly to the POD-DEIM method in the presence of anomalies with the exception of a few spikes in error from the POD-DEIM method in \Cref{fig:anomaly}.

\begin{figure}[!ht]
    \centering
    \includegraphics[width=0.75\linewidth]{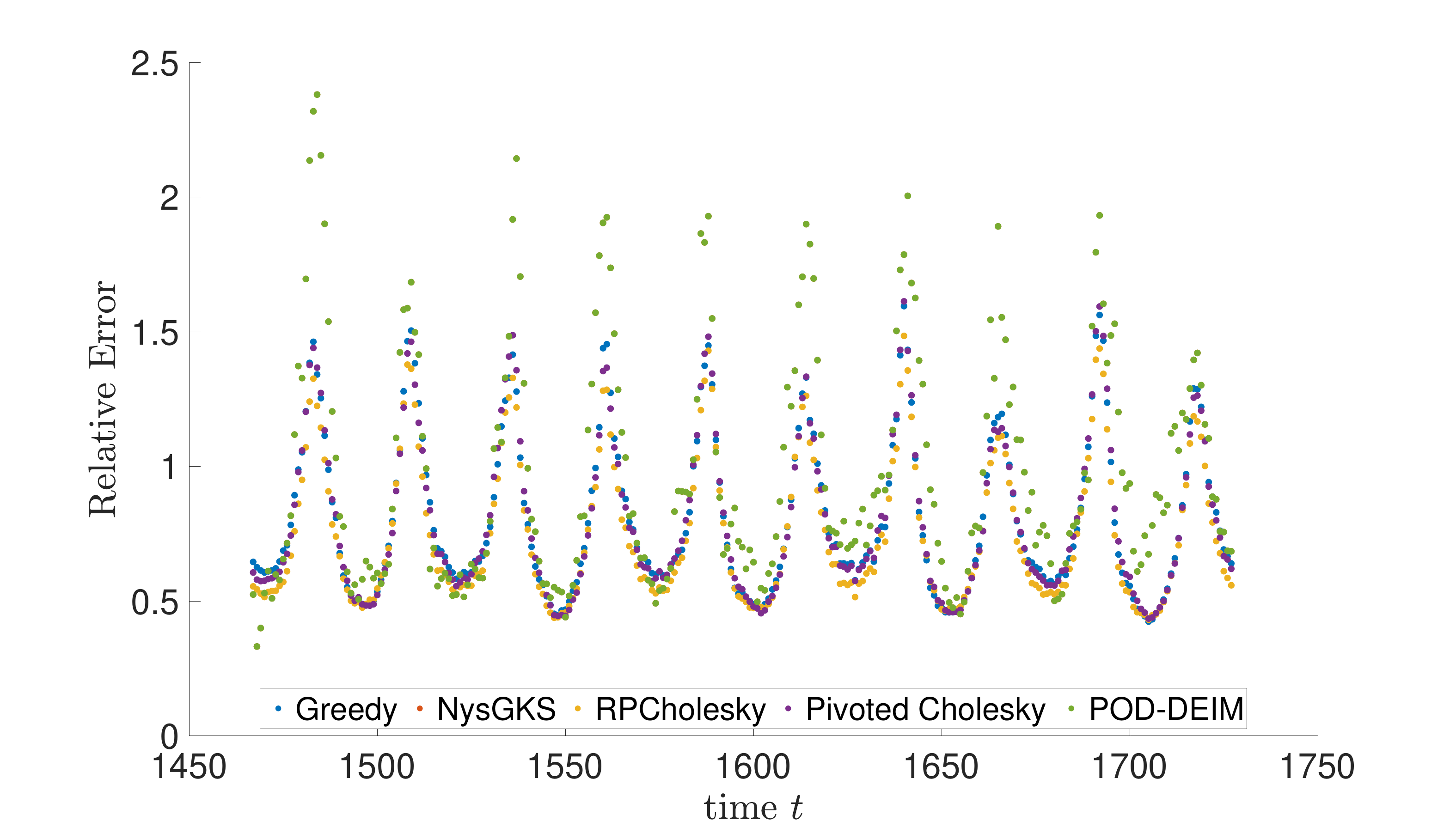}
    \caption{\textbf{Unnormalized} relative error in reconstruction as $t$ increases.}
    \label{fig:anomaly}
\end{figure}

\subsubsection{Placement using great circle Distance}\label{ssec:geodist}
In this section, we take into account the shape of the Earth and consider using the great circle distance between two candidate sensor locations rather than the simple Euclidean distance in the kernel function \eqref{gkernel}. The great circle distance measures the shortest path between two points on a sphere. The Haversine formula computes the great circle between two points when given their latitude and longitude. For our implementation, we use \texttt{MATLAB}'s \texttt{distance()} function with the World Geodetic System of 1984 (WGS84) as the reference ellipsoid to find the distance between candidate sensors in kilometers.

\paragraph{Hyperparameter Tuning} Before running numerical experiments, we must tune the hyperparameters of this new kernel function. We perform a similar hyperparameter sweep as before sweep for $\sigma_f$ and $\ell$. The hyperparameter pair which maximized \eqref{lml} is summarized in the following table:
\begin{center}
    \begin{tabular}{c|c|c}
        $\sigma_f$  & $\ell$& $\eta$ \\ \hline
        8 &  1550 & $  0.033$
    \end{tabular}
    
\end{center}

\paragraph{Reconstruction with Great Circle Distance Based Kernel}
We showcase the sensor selections made by the RPCholesky algorithm (\Cref{alg:cholgks} and \cite{chen2025randomly}) using the great circle distance based kernel. We notice in \Cref{fig:2drecongeo} that, the sensor placements are now more sparse towards the north and south poles. However, the relative reconstruction error over time in \Cref{fig:anomaly3} behaves similarly to the numerical experiments using the Euclidean distance based squared exponential kernel \eqref{gkernel} in \Cref{fig:anomaly}.

\begin{figure}[!ht]
     \centering
     \begin{subfigure}[b]{0.49\textwidth}
     
         \centering
         \includegraphics[width=\textwidth]{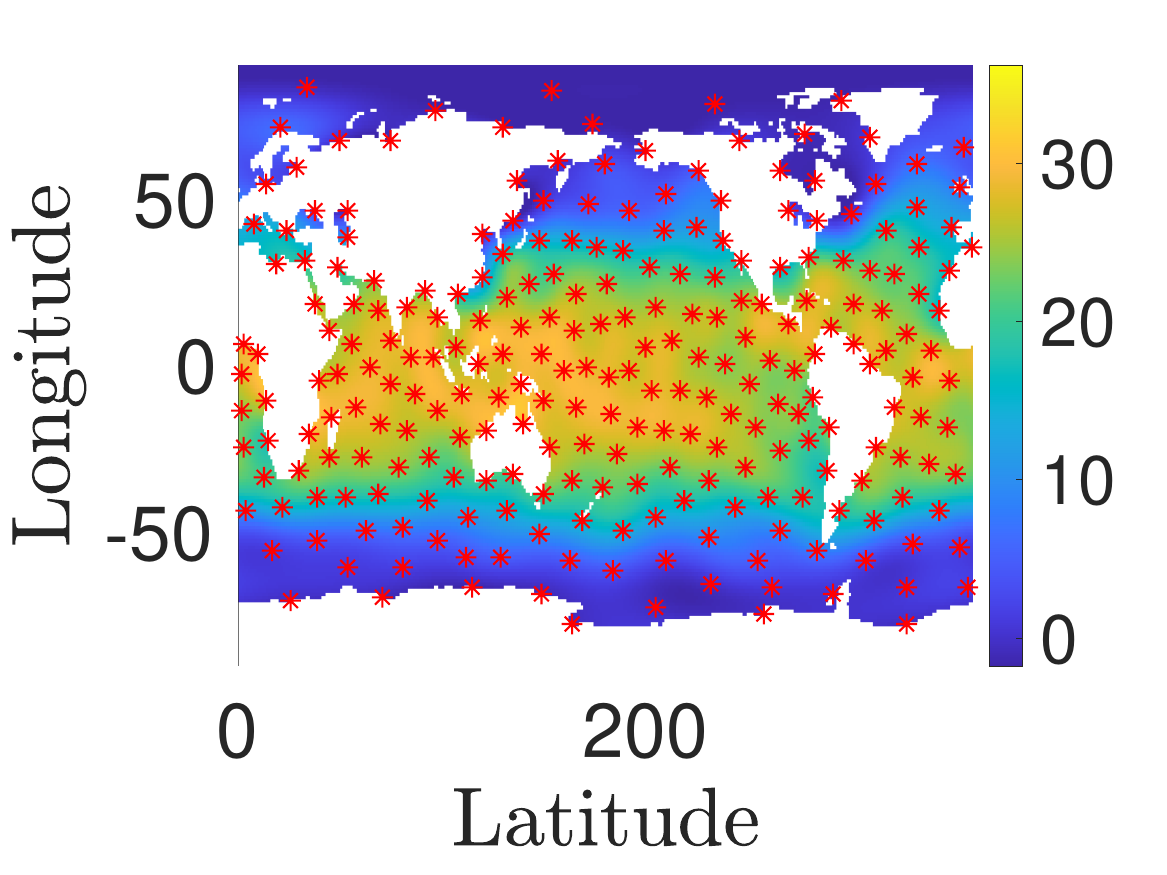}
         
     \end{subfigure}
     \hfill
     \begin{subfigure}[b]{0.49\textwidth}
         \centering
         \includegraphics[width=\textwidth]{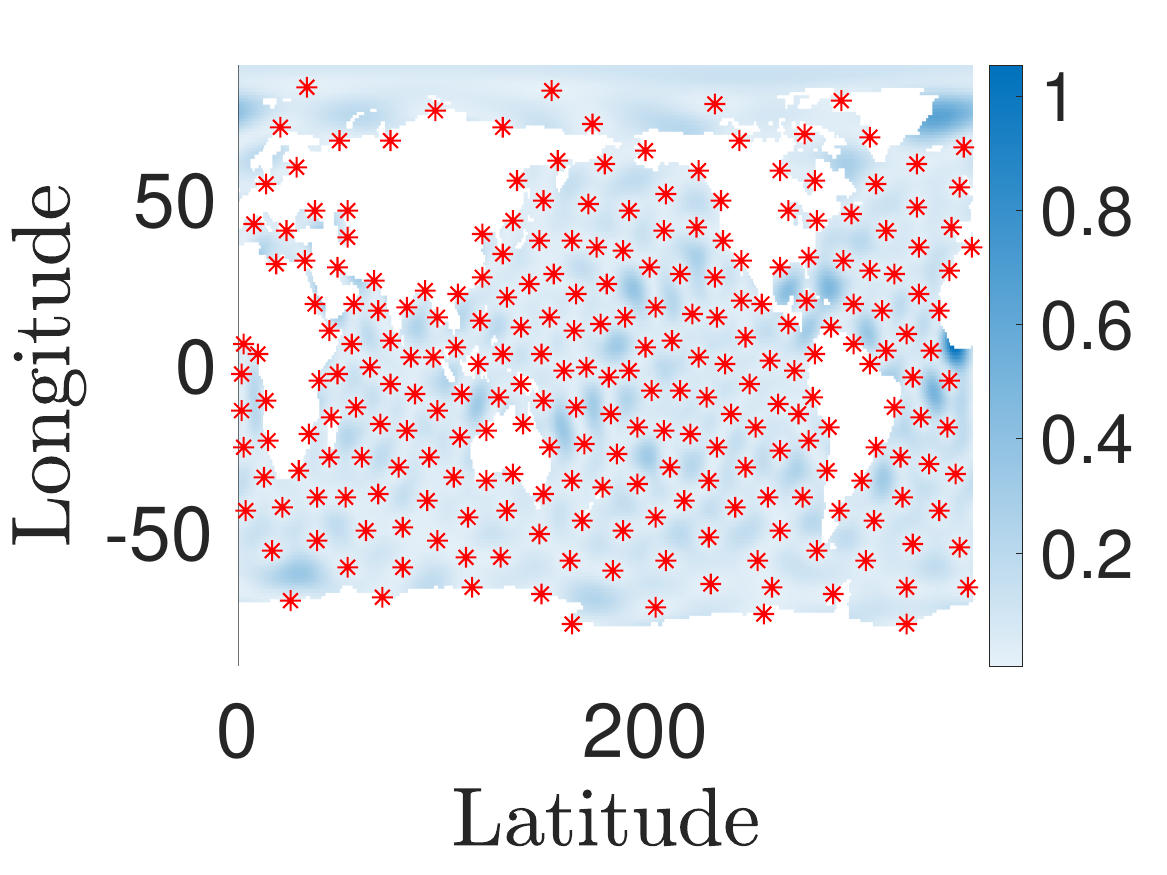}
     \end{subfigure}
        \caption{Random pivoted Cholesky sensor placements (red stars), GP reconstruction, and uncertainty using great circle distance.}
        \label{fig:2drecongeo}
\end{figure}

\begin{figure}[!ht]
    \centering
    \includegraphics[width=0.75\linewidth]{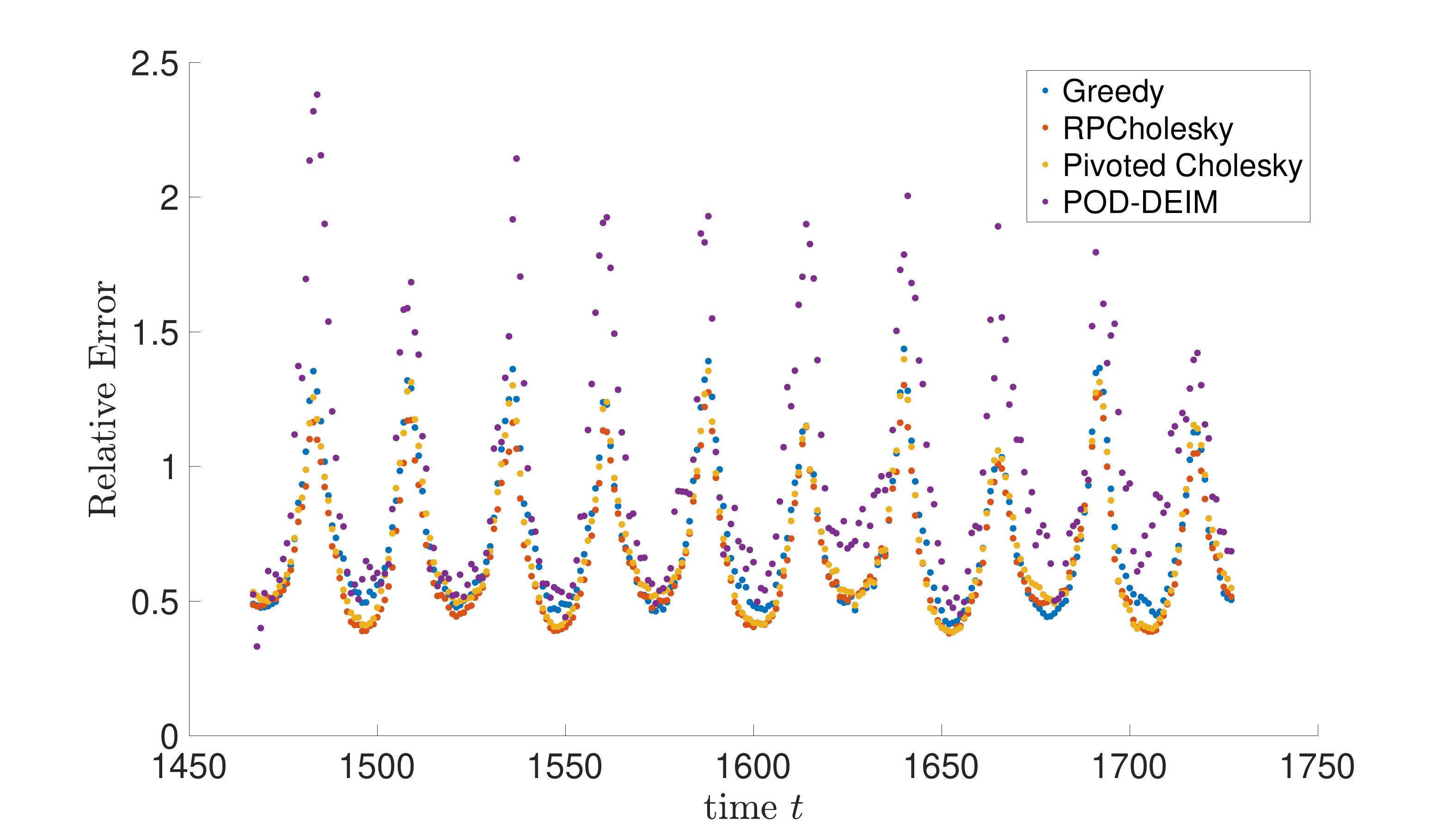}

    \caption{\textbf{Unnormalized} relative error in reconstruction as $t$ increases using great circle distance with POD error.}
    \label{fig:anomaly3}
\end{figure}

\subsection{Four Dimensional Example} \label{ssec:4d}
The following section is a brief demonstration of the use of our algorithm for the purpose of surrogate modeling of complicated behavior in multivariate functions. The Zhou function (1998) \cite{zhou,surrogatedata} is an example of a $D$-dimensional function which is not easily approximated by low-order polynomials. The function is defined as 
\begin{align} \label{eq:4dfunction}
    f(\vec x) &= \frac{10^D}{2} [\varphi(10(\vec x - \zeta_1 \vec e ))+\varphi(10(\vec x - \zeta_2 \vec e))], \quad \text{where} \\
    \varphi(\vec x^*) &= (2 \pi) ^{D/2}\exp(-0.5\|\vec x^*\|_2^2),
\end{align} and $\vec e \in \reals^D$ is the $D$-dimensional vector of all ones. For the ground truth data, we let $\zeta_1 = 1/3$ and $\zeta_2 = 2/3$ in \eqref{eq:4dfunction}. To tune hyperparameters, we set $\zeta_1 = 1/4$ and $\zeta_2 = 3/4$. The value of hyperparameters are chosen to be: 
\begin{center}
    \begin{tabular}{c|c|c}
        $\sigma_f$  & $\ell$& $\eta$ \\ \hline
        6.5 &  0.16 & $  0.2845$
    \end{tabular}
\end{center}

\begin{figure}[!ht]
    \centering
    \includegraphics[width=0.75\linewidth]{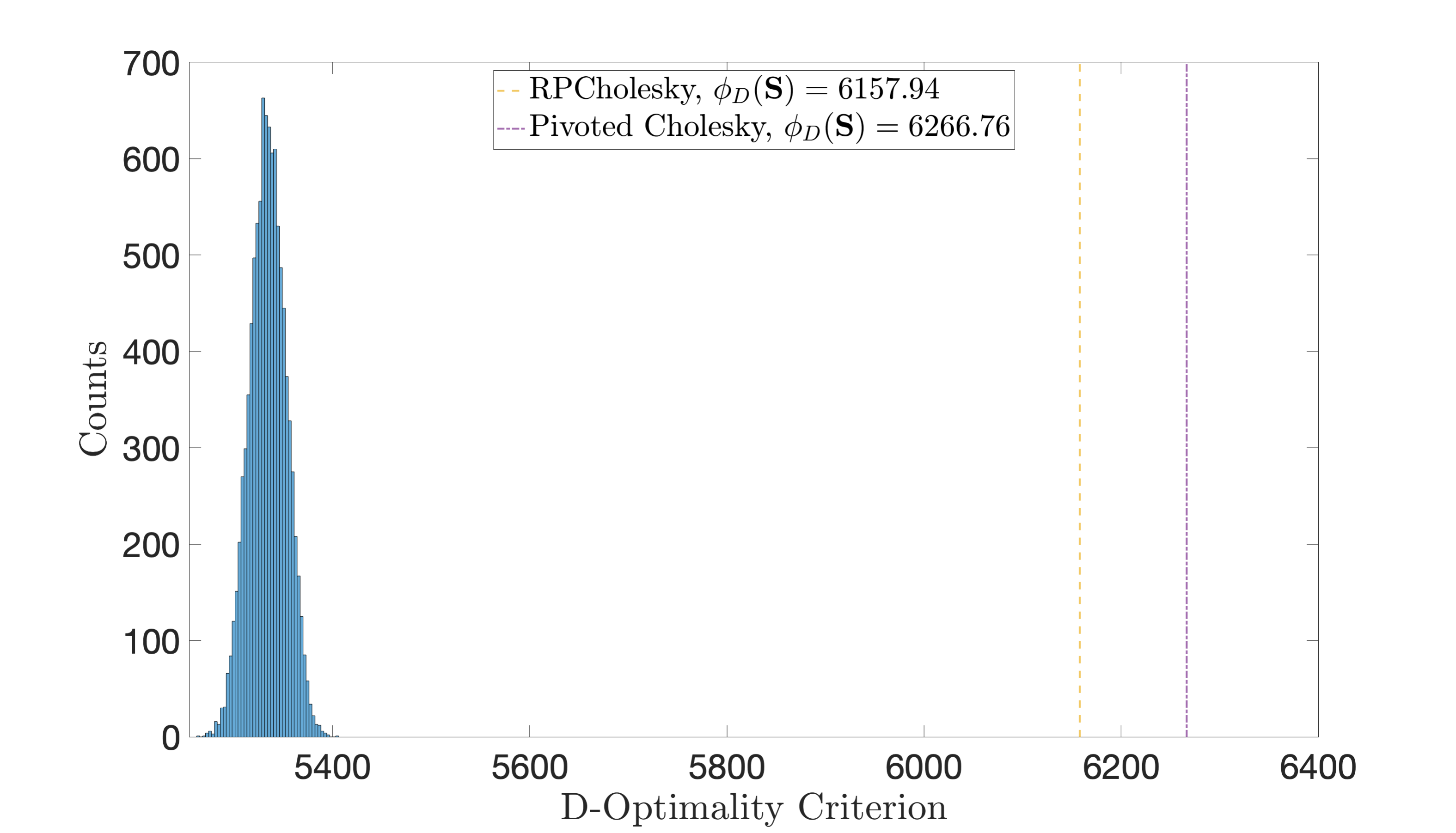}
    \caption{RPCholesky and greedy pivoted Cholesky (\Cref{alg:cholgks}) compared with 10,000 realizations fo random sensor placement in terms of D-optimality. The corresponding relative errors are $8.84\%$ and $10.12\%$, for the RPCholesky and greedy pivoted Cholesky selections, respectively.}
    \label{fig:4D_example}
\end{figure}

In this experiment, we use Latin Hypercube sampling to construct a set of $n=10,000$ candidate sensors and set $k = 3,000$. We employ the RPCholesky and greedy pivoted Cholesky algorithms (\Cref{alg:cholgks}) to select a D-optimal designs. We provide in \Cref{fig:4D_example} a D-optimality comparison of 10,000 random realizations of sensor selection to the results of our algorithms. Again, we see that the D-optimality of the proposed methods is significantly larger than random selections. The D-optimality of the selection made by the RPCholesky + GKS and greedy pivoted Cholesky + GKS algorithm is 6157.94 and 6266.76, respectively. The relative error in the reconstruction is $8.84\%$ and $10.12\%$ respectively.

\section{Additional Proofs}\label{sec:proofs}
In this section, we present the proof to \Cref{thm2}.
\subsection{Bounds on Nystr\"om Approximation Eigenvalues}\label{ssec:nysbound}
\begin{proof}[Proof of \Cref{thm2}]\label{nyseigproof}

    The proof technique is similar to \cite[Theorem 9]{saibaba2019randomized}. We prove the sequence of inequalities from left to right. For the first inequality, by~\cite[Lemma 2.1, Item 3]{frangella2023randomized} $\mat{K} \succeq \wh{\mat{K}}$.  From the properties of Loewner partial ordering~\eqref{eqn:weylconsq}, follows $  \lambda_i(\mat K) \geq\lambda_i(\wh{\mat K})$ for $1 \le i \le k$, establishing the first set of inequalities.

    We now prove the second set of inequalities. We may rewrite the matrix $\mat Z := \mat K^{1/2} \mat \Omega$ as the following 
    \begin{align*}
        \mat Z := \mat K^{1/2} \mat \Omega = \mat V \mat \Lambda^{1/2} \begin{bmatrix}
            \mat V_k^\top \mat \Omega \\ \mat V_\perp^\top \mat \Omega 
        \end{bmatrix} = \mat V \begin{bmatrix}
            \mat \Lambda_k^{1/2} \mat \Omega_k \\ \mat \Lambda_\perp^{1/2} \mat \Omega_{n-k}
        \end{bmatrix} .
    \end{align*}
    Note that $$\begin{aligned}\widehat{\mat{K}} =  \mat{K}^{1/2} (\mat{Z}(\mat{Z}\t\mat{Z})^\dagger \mat{Z}\t)\mat{K}^{1/2}  =  \mat{K}^{1/2}\mat{P}_{\mat{Z}}\mat{K}^{1/2}. \end{aligned}$$
    Define the matrix $\wh{\mat Z}$  
    \begin{align}
        \widehat{\mat Z} := \zhatdef = \mat V \begin{bmatrix}
\mat I_k \\ \mat F        
    \end{bmatrix} \in \reals^{n \times k}
    \end{align}
    where $\mat F := \mat \Lambda_\perp^{1/2} \mat \Omega_{n-k}\mat \Omega_k^\dagger \mat \Lambda_k^{-1/2} \in \reals^{(n-k) \times k}$ and we have used the fact that  $\mat \Omega_k$ has a right pseudo-inverse. We define a new orthogonal projector $\mat{P}_{\widehat{\mat{Z}}}$ onto the range of $\widehat{\mat{Z}}$.  Note that $\range(\widehat{\mat Z}) \subseteq \range(\mat Z)$, so that by \cite[Propositon 8.5]{HMT}, we have $
         \mat P_{\wh{\mat Z}} \preceq \mat P_\mat Z$ and 
         \[ \widehat{\mat{K}} =  \mat{K}^{1/2}\mat{P}_{\mat{Z}}\mat{K}^{1/2} \succeq \mat K^{1/2} \mat P_{\zhat} \mat K^{1/2}.  \] 
    With this majorization, we now look at 
    \begin{align*}
        \mat K^{1/2} \mat P_{\zhat} \mat K^{1/2} & = \mat{V\Lambda}^{1/2} \begin{bmatrix}
\mat I_k \\ \mat F        
    \end{bmatrix} (\mat I_k + \mat F^\top \mat F)^{-1} \begin{bmatrix}
        \mat I_k & \mat F^\top
    \end{bmatrix} \mat{\Lambda}^{1/2} \mat V \\
    & = \mat V \begin{bmatrix}
        \mat \Lambda_k^{1/2} (\mat I_k + \mat F^\top \mat F)^{-1} \mat \Lambda_k^{1/2} & * \\ * & *
    \end{bmatrix} \mat V^\top,
    \end{align*}
    where $*$ denotes sub-blocks that are unimportant to the present calculation.
    Since eigenvalues are preserved  under orthogonal similarity transformations, we may look at the eigenvalues of $\mat{H} : = \mat \Lambda_k^{1/2} (\mat I_k + \mat F^\top \mat F)^{-1} \mat \Lambda_k^{1/2}$. By the properties of Loewner partial ordering and Cauchy's interlacing theorem \eqref{cauchyinterlacing}, 
    \begin{align}\label{eqn:nystrominter}
      \lambda_i(\widehat{\mat{K}}) = \lambda_i( \mat K^{1/2} \mat P_{\mat{Z}} \mat K^{1/2})\geq  \lambda_i(\mat K^{1/2} \mat P_{\zhat} \mat K^{1/2}) \geq  \lambda_i(\mat{H} ), \qquad 1 \le i \le k.
    \end{align}
    Thus, we focus on the eigenvalues of the matrix $\mat{H}$.
    
    Applying \Cref{matbounds} with $\mat A = \mat \Lambda_\perp^{1/2} \mat  \Omega_{n-k} \mat \Omega_k^\dagger$ and  $\mat B = \mat \Lambda_k^{-1/2}$, it follows that
    \begin{align*}
        \mat F\t \mat F \preceq \| \mat \Lambda_\perp^{1/2} \mat  \Omega_{n-k} \mat \Omega_k^\dagger\|_2^2 \mat \Lambda_k^{-1}.
    \end{align*}
   By~\eqref{eqn:loewnerinter}, we have
    $\mat{H}\succeq\mat \Lambda_k^{1/2}(\mat I_k  + \|\mat \Lambda_\perp^{1/2} \mat  \Omega_{n-k} \mat \Omega_k^\dagger\|_2^2 \mat \Lambda_k^{-1})^{-1}\mat \Lambda_k^{1/2}$. Then, the eigenvalues of $ \mat{H}$ are bounded from below as
    \begin{align*}
    \lambda_i(\mat{H}) \geq  \frac{\lambda_i(\mat K)}{1 +  \|\mat \Lambda_\perp^{1/2} \mat \Omega_{n-k}\mat \Omega_k^\dagger\|_2^2 \lambda_i(\mat \Lambda_k^{-1})}, \qquad 1 \le i \le k,
    \end{align*}
    since $\mat \Lambda_k$ is a diagonal matrix containing the top $k$ eigenvalues of $\mat{K}$. Plug this inequality into~\eqref{eqn:nystrominter} to complete the proof. 
\end{proof}

\end{document}